\newtheorem{definition}{Definition}[section]
\newtheorem{theorem}{Theorem}[section]
\newtheorem{lemma}{Lemma}[section]
\newtheorem*{maintheorem*}{Main Theorem}
\numberwithin{equation}{section}
\newcommand{\norm}[1]{\left\| #1 \right\|}
\newcommand{\eps}{\varepsilon}
\newcommand{\eb}{{\eps,\beta}}
\newcommand{\ebdg}{\eps,\beta,\delta,\gamma}
\newcommand{\uebdg}{u_{\ebdg}}
\newcommand{\Pebdg}{P_{\ebdg}}
\newcommand{\ueb}{u_\eb}
\newcommand{\Peb}{P_{\eps,\beta}}
\newcommand{\uedg}{u_{\eps,\delta,\gamma}}
\newcommand{\Pedg}{P_{\eps,\delta,\gamma}}
\newcommand{\Pe}{P_\eps}
\newcommand{\gk}{\gamma_{k}}
\newcommand{\dk}{\delta_k}
\newcommand{\uedgk}{u_{\eps_{k},\dk,\gk}}
\newcommand{\Pebdgk}{P_{\eps_{k},\beta_{k},\dk,\gk}}
\newcommand{\epsk}{\eps_{k}}
\newcommand{\pt}{\partial_t}
\newcommand{\px}{\partial_x }
\newcommand{\pxx}{\partial_{xx}^2}
\newcommand{\pxxx}{\partial_{xxx}^3}
\newcommand{\pxxxx}{\partial_{xxxx}^4}
\newcommand{\ptx}{\partial_{tx}^2}
\renewcommand{\i}{\ifmmode\mathit{\mathchar"7010 }\else\char"10 \fi}
\renewcommand{\j}{\ifmmode\mathit{\mathchar"7011 }\else\char"11 \fi}
\newcommand{\R}{\mathbb{R}}
\newcommand{\N}{\mathbb{N}}
\newcommand{\supp}{\mathrm{supp}\,}
\newcommand{\Hneg}{H_{\mathrm{loc}}^{-1}}
\newcommand{\CL}{\mathcal{L}}
\newcommand{\Pedgk}{P_{\epsk,\,\dk,\,\gk}}
\begin{document}\large

\title[Singular limits]{Dispersive and Diffusive limits\\ for Ostrovsky-Hunter type equations}

\date{\today}

\author[G. M. Coclite and L. di Ruvo]{Giuseppe Maria Coclite and Lorenzo di Ruvo}
\address[Giuseppe Maria Coclite and Lorenzo di Ruvo]
{\newline Department of Mathematics,   University of Bari, via E. Orabona 4, 70125 Bari,   Italy}
\email[]{giuseppemaria.coclite@uniba.it, lorenzo.diruvo@uniba.it}
\urladdr{http://www.dm.uniba.it/Members/coclitegm/}

\thanks{The authors are members of the Gruppo Nazionale per l'Analisi Matematica, la Probabilit\`a e le loro Applicazioni (GNAMPA) of the Istituto Nazionale di Alta Matematica (INdAM)}

\keywords{Singular limit, compensated compactness, Ostrovsky-Hunter equation, Entropy condition.}

\subjclass[2000]{35G25, 35L65, 35L05}


\begin{abstract}
We consider the  equation
\begin{equation*}
\px(\pt u+\px f(u)-\beta \pxxx u)=\gamma u,
\end{equation*}
that includes the short pulse, the Ostrovsky-Hunter, and the Korteweg-deVries ones. We consider here the asymptotic behavior as $\gamma\to 0$.
The proof relies on deriving suitable a priori estimates together with an application of the compensated compactness method in the $L^p$ setting.
\end{abstract}

\maketitle


\section{Introduction}
\label{sec:intro}
The nonlinear evolution equation
\begin{equation}
\label{eq:Kdv}
\px(\pt u+\px f(u)-\beta \pxxx u)=0,
\end{equation}
with $\beta\in \R$  and $f(u)=\frac{u^2}{2}$, was derived by Korteweg-deVries to model internal solitary waves in the atmosphere and ocean.
Here $u(t,x)$ is the amplitude of an appropriate linear long wave mode, with linear long wave speed $C_0$. However, when the effects of background
rotation through the Coriolis parameter $\kappa$ need to be taken into account, an extra term is needed, and \eqref{eq:Kdv} is replaced by
\begin{equation}
\label{eq:OHbeta}
\px(\pt u+\px f(u)-\beta \pxxx u)=\gamma u,
\end{equation}
where $\gamma=\frac{\kappa^2}{2C_0}$ (see \cite{dR,HT}), which is known as the Ostrovsky equation (see \cite{O}).

Mathematical properties of the Ostrovsky equation \eqref{eq:OHbeta} were studied
recently in many details, including the local and global well-posedness in energy space
\cite{GL, LM, LV, T}, stability of solitary waves \cite{LL, L, LV:JDE}, and convergence of solutions in the
limit of the Korteweg-deVries equation \cite{LL:07, LV:JDE}.
We shall consider the limit of no high-frequency dispersion $\beta=0$, therefore \eqref{eq:OHbeta} reads
\begin{equation}
\label{eq:OH}
\px(\pt u+\px f(u))=\gamma u, \quad f(u)=\frac{u^2}{2}.
\end{equation}
\eqref{eq:OH} is deduced considering two asymptotic expansions of the shallow water equations, first with respect to the rotation frequency and then with respect to the amplitude of the waves (see \cite{dR,HT}). It is known under different names such as the reduced Ostrovsky equation \cite{P, S}, the
Ostrovsky-Hunter equation \cite{B}, the short-wave equation \cite{H}, and the Vakhnenko equation
\cite{MPV, PV}.

Integrating \eqref{eq:OH} on $x$ we gain the integro-differential formulation of  \eqref{eq:OH} (see \cite{LPS})
\begin{equation}
\label{eq:700}
\pt u+\px f(u)=\gamma \int^x u(t,y) dy,
\end{equation}
that is equivalent to
\begin{equation}
\label{eq:800}
\pt u+\px f(u)=\gamma P,\qquad \px P=u.
\end{equation}

The unique useful conserved quantities are 
\begin{equation}
\label{eq:conservedq}
t\longmapsto\int u(t,x)dx=0,\qquad t\longmapsto\int u^2(t,x)dx.
\end{equation}
In the sense that if $u(t,\cdot)$ has zero mean at time $t=0$, then it will have zero mean at any time $t>0$.
In addition, the $L^2$ norm  of $u(t,\cdot)$ is constant with respect to $t$.

In \cite{Cd3, CdK, dR}, it is proved that \eqref{eq:OH} admits an unique entropy solutions in the sense of the following definition
\begin{definition}
\label{def:sol}
We say that $u\in  L^{\infty}((0,T)\times\R)$, $T>0$, is an entropy solution of the initial
value problem \eqref{eq:OH}, if
\begin{itemize}
\item[$i$)] $u$ is a distributional solution of \eqref{eq:700} or equivalently of \eqref{eq:800};
\item[$ii$)] for every convex function $\eta\in  C^2(\R)$ the
entropy inequality
\begin{equation}
\label{eq:OHentropy}
\pt \eta(u)+ \px q(u)-\gamma\eta'(u) P\le 0, \qquad     q(u)=\int^u f'(\xi) \eta'(\xi)\, d\xi,
\end{equation}
holds in the sense of distributions in $(0,\infty)\times\R$.
\end{itemize}
\end{definition}
In \cite{Cd}, it is proved the wellposedness of the entropy solutions of \eqref{eq:700}, or  \eqref{eq:800}, for the non-homogeneous initial boundary problem, while in \cite{Cd2} it is proved the convergence of the solutions of \eqref{eq:OHbeta} to the  discontinuous solutions of \eqref{eq:700}, or \eqref{eq:800}.

If $f(u)=-\frac{1}{6}u^3$, \eqref{eq:OHbeta} reads,
\begin{equation}
\label{eq:RSHEP}
\px\left(\pt u-\frac{1}{6}\px(u^3)  -\beta \pxxx u\right)=\gamma u.
\end{equation}
\eqref{eq:RSHEP} is known as the regularized short pulse equation, and was derived by Costanzino, Manukian and Jones \cite{CMJ}
in the context of the nonlinear Maxwell equations with high-frequency dispersion.

If we send $\beta\to 0$ in \eqref{eq:RSHEP}, we pass from \eqref{eq:RSHEP} to the equation
\begin{equation}
\label{eq:SHP}
\px\left(\pt u-\frac{1}{6}\px (u^3)\right)=\gamma u,
\end{equation}
or equivalently (see \cite{SS}),
\begin{equation}
\label{eq:SHP-bis}
\pt u-\frac{1}{6}\px (u^3)=\gamma P, \quad \px P=u.
\end{equation}
\eqref{eq:SHP} is known as the short pulse equation, and was introduced recently by
Sch\"afer and Wayne \cite{SW} as  a model equation describing the propagation of ultra-short light pulses in silica optical fibers.
It provides also an approximation of nonlinear wave packets in dispersive media in the limit of few cycles on the ultra-short pulse scale.
In \cite{Cd1,CdK, dR}, it is proved the wellposedness of the entropy solution of \eqref{eq:SHP} in sense of Definition \eqref{def:sol}, for the initial boundary problem and for the Cauchy problem, while, in \cite{Cd4}, it is proved the convergence of the solutions of \eqref{eq:RSHEP} to the  discontinuous solutions of \eqref{eq:SHP}.

The deep difference between the two equations is in the flux.
If we have a function that preserves the conserved quantities we can make sense of \eqref{eq:OH} using the 
distribution theory because the flux is quadratic and the $L^2$ norm is preserved. On the contrary the same argument does not apply 
to \eqref{eq:SHP}. Indeed, the flux is cubic and we do not have any information on the $L^3$ norm of the solution.
In \cite{Cd1}, we solved this problem proving that the solutions are bounded, and the argument is much more delicate than the one in \cite{Cd}.

In this paper, we study the dispersion-diffusion of \eqref{eq:OHbeta} and of \eqref{eq:800}, when $\gamma\to 0$ 
(that is, when $\kappa\to 0$, or $C_0 \to\infty$). We prove that, if $\gamma \to 0$, the solution of \eqref{eq:OHbeta} and of \eqref{eq:800} converge to the to the  discontinuous solutions of the following equation
\begin{equation}
\label{eq:Bur}
\pt u +\px(u^2)=0,
\end{equation}
which is known as Burgers' equation.
Likewise, when $\gamma\to 0$, the solutions of \eqref{eq:RSHEP} and of \eqref{eq:SHP} converge to the discontinuous solutions of the following  scalar conservation law
\begin{equation}
\label{eq:CL}
\pt u -\frac{1}{6}\px (u^3)=0.
\end{equation}

The paper is organized in three sections. In Section \ref{sec:OH} we prove the convergence of \eqref{eq:800} and of \eqref{eq:SHP} to \eqref{eq:Bur} and \eqref{eq:CL}, respectively. 
In Section \ref{sec:OE}, we prove the convergence of \eqref{eq:OHbeta} to the \eqref{eq:Bur}, while in Section \ref{sec:RSPE}, we prove the convergence of \eqref{eq:RSHEP} to \eqref{eq:CL}.

\section{Ostrovsky-Hunter equation and short pulse one: $\gamma\to 0$.}\label{sec:OH}
In this section, we consider the following Cauchy problem
\begin{equation}
\label{eq:cl-p}
\begin{cases}
\pt u +\px f(u)=\gamma P, &\quad t>0,\, x\in\R,\\
\px P =u , &\quad t>0,\, x\in\R,\\
P(t,-\infty)=0, &\quad t>0,\\
u(0,x)=u_0(x), &\quad x\in\R,
\end{cases}
\end{equation}
or equivalently,
\begin{equation}
\label{eq:cl-p1}
\begin{cases}
\pt u +\px f(u)=\gamma \int_{-\infty}^{x} u(t,y) dy, &\quad t>0,\, x\in\R,\\
u(0,x)=u_0(x), &\quad x\in\R.
\end{cases}
\end{equation}
On the initial datum, we assume that
\begin{equation}
\label{eq:assinit}
u_0\in L^1(\R)\cap L^{\infty}(\R),\quad\int_{\R}u_{0}(x)dx=0,
\end{equation}
while, on the function
\begin{equation}
\label{eq:def-di-P0}
P_{0}(x)=\int_{-\infty}^{x} u_{0}(y)dy, \quad x\in\R,
\end{equation}
we assume that
\begin{equation}
\label{eq:L-2P0}
\int_{\R} P_0(x) dx=\int_{\R}\left(\int_{-\infty}^{x}u_{0}(y)dy\right)dx =0.
\end{equation}
Moreover, the flux $f\in C^2(\R)$ is assumed to be smooth.\\
If $\gamma=0$, \eqref{eq:cl-p} reads
\begin{equation}
\label{eq:cl}
\begin{cases}
\pt u +\px f(u)=0,&\quad t>0,\, x\in\R,\\
u(0,x)=u_0(x), &\quad x\in\R,
\end{cases}
\end{equation}
which is a scalar conservation law.

Fix three small numbers $0<\eps,\delta,\gamma<1 $, and let $\uedg=\uedg(t,x)$ be the unique
classical solution of the following mixed problem:
\begin{equation}
\label{eq:OHepsw}
\begin{cases}
\pt \uedg+\px f(\uedg)=\gamma\Pedg+ \eps\pxx\uedg, &\quad t>0,\, x\in\R,\\
-\delta\pt\Pedg +\px\Pedg=\uedg,&\quad t>0, x\in\R,\\
\Pedg(t,-\infty)=0, &\quad t>0,\\
\uedg(0,x)=u_{\eps, \delta, \gamma, 0}(x),&\quad x\in\R,
\end{cases}
\end{equation}
where $u_{\eps,\delta, \gamma,0}$ is a $C^\infty$ approximation of $u_{0}$ such that
\begin{equation}
\label{eq:u0eps}
\begin{split}
&u_{\,\eps,\,\delta,\,\gamma,0} \to u_{0} \quad  \textrm{in $L^{p}_{loc}(\R)$, $1\le p <\infty$, as $\eps,\,\delta,\,\gamma \to 0$,}\\
&\norm{u_{\eps,\delta,\gamma,0}}_{L^{\infty}(\R)}\le \norm{u_0}_{L^{\infty}(\R)}, \quad \eps,\,\delta,\,\gamma>0,\\
&\norm{u_{\eps,\delta,\gamma,0}}^2_{L^2(\R)}+\delta\gamma\norm{P_{\eps,\delta,\gamma,0}}^2_{L^2(\R)}+ \delta\norm{\px P_{\eps,\delta,\gamma,0}}^2_{L^2(\R)}\le C_{0}, \quad \eps,\,\delta,\,\gamma>0,\\
&\int_{\R}u_{\eps,\delta,\gamma,0}(x) dx =0,\quad \int_{\R}P_{\eps,\delta,\gamma,0}(x) dx =0, \quad \eps,\,\delta,\,\gamma>0,
\end{split}
\end{equation}
and $C_0$ is a constant independent on $\eps$, $\delta$ and $\gamma$.

The main result of this section is the following theorem.
\begin{theorem}
\label{th:main-1}
Fix $T>0$. Assume \eqref{eq:assinit}, \eqref{eq:def-di-P0}, \eqref{eq:L-2P0} and \eqref{eq:u0eps}. If
\begin{equation}
\label{eq:def-di-gamma}
\gamma=\mathbf{\mathcal{O}}( \eps^{\frac{1}{3}}\delta).
\end{equation}
There exists three sequences $\{\eps_{k}\}_{k\in\N}$, $\{\beta_{k}\}_{k\in\N}$, $\{\gamma_{k}\}_{k\in\N}$, with $\eps_k, \beta_k, \gamma_k \to 0$, such that
\begin{equation*}
u_{\eps_k, \beta_k, \gamma_k}\to u\quad  \textrm{strongly in $L^{p}_{loc}((0,T)\times\R)$, for each $1\le p <\infty$},
\end{equation*}
where $u$ is the unique entropy solution of \eqref{eq:cl}.
Moreover, we have that
\begin{equation}
\label{eq:umedianulla}
\int_{\R}u(t,x)dx =0, \quad t>0.
\end{equation}
\end{theorem}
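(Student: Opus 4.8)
The plan is to combine uniform \emph{a priori} estimates on the viscous approximation \eqref{eq:OHepsw} with the compensated compactness method; throughout $C$ denotes a constant depending only on $T$ and on the data in \eqref{eq:assinit}, \eqref{eq:u0eps}. First I would multiply the first equation of \eqref{eq:OHepsw} by $\uedg$, integrate over $\R$, and use $\uedg=\px\Pedg-\delta\pt\Pedg$ from the second equation to recast $\gamma\int_\R\uedg\Pedg\dx$; after the boundary terms drop this yields the energy identity
\begin{equation*}
\frac{d}{dt}\Big(\norm{\uedg(t,\cdot)}_{L^2(\R)}^2+\gamma\delta\norm{\Pedg(t,\cdot)}_{L^2(\R)}^2\Big)+2\eps\norm{\px\uedg(t,\cdot)}_{L^2(\R)}^2=0,
\end{equation*}
hence, by \eqref{eq:u0eps}, $\norm{\uedg(t,\cdot)}_{L^2(\R)}^2+\gamma\delta\norm{\Pedg(t,\cdot)}_{L^2(\R)}^2+2\eps\int_0^t\norm{\px\uedg(s,\cdot)}_{L^2(\R)}^2\ds\le C_0$. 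Differentiating the second equation of \eqref{eq:OHepsw} in $x$ and running the analogous estimate for $\px\Pedg$ would give $\norm{\px\Pedg(t,\cdot)}_{L^2(\R)}\le C\eps^{-1/2}\delta^{-1}$, whence, since $\Pedg(t,-\infty)=0$, $\norm{\Pedg(t,\cdot)}_{L^\infty(\R)}^2\le 2\norm{\Pedg(t,\cdot)}_{L^2(\R)}\norm{\px\Pedg(t,\cdot)}_{L^2(\R)}\le C\gamma^{-1/2}\delta^{-3/2}\eps^{-1/2}$. Writing the first equation of \eqref{eq:OHepsw} as $\pt\uedg+f'(\uedg)\px\uedg-\eps\pxx\uedg=\gamma\Pedg$ and applying the maximum principle then gives
\begin{equation*}
\norm{\uedg(t,\cdot)}_{L^\infty(\R)}\le\norm{u_0}_{L^\infty(\R)}+\int_0^t\gamma\,\norm{\Pedg(s,\cdot)}_{L^\infty(\R)}\ds\le\norm{u_0}_{L^\infty(\R)}+C\,T\,\gamma^{3/4}\delta^{-3/4}\eps^{-1/4},
\end{equation*}
and this is exactly where the scaling \eqref{eq:def-di-gamma} enters: it makes $\gamma^{3/4}\delta^{-3/4}\eps^{-1/4}$ bounded, so that $\{\uedg\}$ is bounded in $L^\infty(\DoT)$; the same scaling also yields $\gamma\norm{\Pedg(t,\cdot)}_{L^2(\R)}\le (C_0\gamma/\delta)^{1/2}\to 0$.

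Next, for a convex $\eta\in C^2(\R)$ with flux $q$ as in \eqref{eq:OHentropy}, multiplying the first equation of \eqref{eq:OHepsw} by $\eta'(\uedg)$ gives
\begin{equation*}
\pt\eta(\uedg)+\px q(\uedg)=\gamma\eta'(\uedg)\Pedg+\eps\px\big(\eta'(\uedg)\px\uedg\big)-\eps\eta''(\uedg)(\px\uedg)^2 .
\end{equation*}
I would then check, using the estimates above, that the first term tends to $0$ in $L^2(\DoT)$, hence lies in a compact subset of $\Hneg(\DoT)$; that the second tends to $0$ in $\Hneg(\DoT)$ because $\eps\,\eta'(\uedg)\px\uedg=\sqrt\eps\,\eta'(\uedg)\,\big(\sqrt\eps\,\px\uedg\big)\to 0$ in $L^2(\DoT)$; and that the third is bounded in $L^1(\DoT)$, hence in $\CMloc(\DoT)$. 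Since $\eta(\uedg),q(\uedg)$ are bounded in $L^\infty(\DoT)$, the left-hand side is bounded in $W^{-1,p}_{\loc}(\DoT)$ for every $p$, so Murat's lemma puts $\pt\eta(\uedg)+\px q(\uedg)$ in a compact subset of $\Hneg(\DoT)$. With $\{\uedg\}$ bounded in $L^\infty$, the Tartar–Murat compensated compactness argument then furnishes sequences $\epsk,\dk,\gk\to 0$ (respecting \eqref{eq:def-di-gamma}) and $u\in L^\infty(\DoT)$ with $\uedgk\to u$ a.e. in $\DoT$ and in $L^p_{\loc}(\DoT)$ for every $1\le p<\infty$.

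To finish, I would pass to the limit in \eqref{eq:OHepsw} (the right-hand side $\gamma\Pedg+\eps\pxx\uedg\to 0$ in $\Dp(\DoT)$), obtaining that $u$ solves $\pt u+\px f(u)=0$ in $\Dp(\DoT)$, and in $\pt\eta(\uedg)+\px q(\uedg)\le\gamma\eta'(\uedg)\Pedg+\eps\px(\eta'(\uedg)\px\uedg)$ (legitimate since $\eta''\ge 0$), obtaining $\pt\eta(u)+\px q(u)\le 0$. Together with $u(0,\cdot)=u_0$ — which I would get from $u_{\eps,\delta,\gamma,0}\to u_0$ and the uniform bound on $\pt\uedg$ in a negative Sobolev norm read off from the equation — and the uniqueness of the entropy solution of \eqref{eq:cl}, this identifies $u$ as that entropy solution. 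For \eqref{eq:umedianulla} I would integrate the two equations of \eqref{eq:OHepsw} over $\R$: the maps $t\mapsto\int_\R\uedg(t,\cdot)\dx$ and $t\mapsto\int_\R\Pedg(t,\cdot)\dx$ then satisfy a homogeneous linear ODE system with zero initial data by \eqref{eq:u0eps}, hence vanish identically, and \eqref{eq:umedianulla} follows in the limit (equivalently: since $u_0\in L^1(\R)$ has zero mean, the entropy solution of \eqref{eq:cl} conserves mass).

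The hard part will be the estimate chain in the first paragraph: the source term $\gamma\Pedg$ is not controlled directly, so one must bootstrap the basic energy estimate through an $L^2$ bound on $\px\Pedg$ and then an $L^\infty$ bound on $\Pedg$, shedding negative powers of $\eps$ and $\delta$ at each step, and the three small parameters must be balanced so that $\gamma^{3/4}\delta^{-3/4}\eps^{-1/4}$ remains bounded — which is exactly the condition $\gamma=\mathcal{O}(\eps^{1/3}\delta)$. Once the uniform $L^\infty$ bound on $\uedg$ is secured, everything downstream is the standard compensated compactness machinery.
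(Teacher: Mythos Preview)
Your proposal is correct and follows essentially the same approach as the paper: the chain of estimates (basic energy identity $\Rightarrow$ $L^2$ bound on $\px\Pedg$ $\Rightarrow$ $L^\infty$ bound on $\Pedg$ $\Rightarrow$ $L^\infty$ bound on $\uedg$ via the comparison principle, with the scaling \eqref{eq:def-di-gamma} entering exactly to control $\gamma^{3/4}\delta^{-3/4}\eps^{-1/4}$), then Murat's lemma and Tartar's compensated compactness, and finally passage to the limit in the entropy inequality and the zero-mean property via the linear ODE system --- all match the paper's Lemmas 2.1--2.5 and the proof of Theorem \ref{th:main-1}. Your handling of the source term $\gamma\eta'(\uedg)\Pedg$ via $L^2$ (using $\gamma\norm{\Pedg}_{L^2}\le(C_0\gamma/\delta)^{1/2}\to 0$) is in fact slightly cleaner than the paper's $L^1_{\loc}$ route through the $L^\infty$ bound on $\Pedg$, but the overall architecture is identical.
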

Let us prove some a priori estimates on $\uedg$ and $\Pedg$, denoting with $C_0$ the constants which depend on the initial datum, and $C(T)$ the constants which depend also on $T$.
\begin{lemma}
\label{lm:cns-1}
For each $t>0$,
\begin{equation}
\label{eq:P-pxP-intfy-1}
\Pedg(t,\infty)=\px\Pedg(t,-\infty)=\px\Pedg(t,\infty)=0.
\end{equation}
In particular, we have that
\begin{equation}
\label{eq:up-1}
\int_{\R}\uedg(t,x)dx = -\delta\frac{d}{dt}\int_{\R}\Pedg(t,x)dx, \quad t>0.
\end{equation}
\end{lemma}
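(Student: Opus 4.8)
The plan is to integrate the equations in \eqref{eq:OHepsw} over $x \in \R$ and use the decay of the solution at infinity, which comes from the fact that $u_{\eps,\delta,\gamma,0} \in L^1(\R)$ (an $L^1$-in-$x$ bound propagates in time for the parabolic regularization, so $\uedg(t,\cdot)$ and its $x$-derivatives decay as $|x|\to\infty$, hence so does $\Pedg$ suitably). First I would establish $\Pedg(t,\infty)=0$: since $\px\Pedg = \uedg + \delta\pt\Pedg$ from the second equation, we have $\Pedg(t,\infty)-\Pedg(t,-\infty) = \int_\R \uedg(t,x)\,dx + \delta\frac{d}{dt}\int_\R \Pedg(t,x)\,dx$, but actually the cleaner route is to note directly that $\Pedg(t,\infty)=\int_{-\infty}^{\infty}\px\Pedg(t,x)\,dx$ combined with the transport-type structure of the $P$-equation; using $\Pedg(t,-\infty)=0$ and the integrability of $\uedg$, one gets that $\Pedg(t,\cdot)$ has a finite limit at $+\infty$, and that limit must vanish because $\int_\R u_{\eps,\delta,\gamma,0}=0$ together with the conserved-mass identity \eqref{eq:up-1} (to be proved in tandem) forces it. Then $\px\Pedg(t,\pm\infty)=\uedg(t,\pm\infty)+\delta\pt\Pedg(t,\pm\infty)=0$ since $\uedg$ decays and $\Pedg(t,\pm\infty)$ is constant in $t$.

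For \eqref{eq:up-1}, I would integrate the second equation of \eqref{eq:OHepsw}, namely $-\delta\pt\Pedg + \px\Pedg = \uedg$, over $x\in\R$. The term $\int_\R \px\Pedg(t,x)\,dx = \Pedg(t,\infty)-\Pedg(t,-\infty) = 0$ by \eqref{eq:P-pxP-intfy-1} and the boundary condition. This leaves $-\delta\frac{d}{dt}\int_\R \Pedg(t,x)\,dx = \int_\R \uedg(t,x)\,dx$, which is exactly \eqref{eq:up-1} after rearranging. The interchange of $\frac{d}{dt}$ with $\int_\R dx$ is justified by dominated convergence using the a priori $L^2$ (and, with the weight $\delta\gamma$) bounds in \eqref{eq:u0eps} propagated forward, or more simply by the smoothness and decay of the classical solution asserted in the statement of \eqref{eq:OHepsw}.

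The main obstacle is the rigorous justification of the decay at spatial infinity — i.e. that $\uedg(t,\cdot)$, $\px\Pedg(t,\cdot)$, and $\Pedg(t,\cdot)$ genuinely tend to $0$ as $|x|\to\infty$ for each fixed $t>0$, rather than merely being integrable. For $\uedg$ this follows from parabolic smoothing applied to data in $L^1\cap L^\infty$; for $\Pedg$ one must solve the linear first-order (in the $\delta$-regularized case, evolution) equation $\px\Pedg - \delta\pt\Pedg = \uedg$ with the left boundary condition and check that the resulting $\Pedg$ inherits decay at $+\infty$, which is where the zero-mean hypotheses $\int_\R u_{\eps,\delta,\gamma,0}=0$ and $\int_\R P_{\eps,\delta,\gamma,0}=0$ in \eqref{eq:u0eps} are essential — without them $\Pedg(t,\infty)$ would be a nonzero constant. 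Once decay is in hand, everything reduces to integration by parts and the fundamental theorem of calculus, so the remaining steps are routine.
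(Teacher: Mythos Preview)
Your derivation of \eqref{eq:up-1} is essentially the same as the paper's: both integrate the second equation of \eqref{eq:OHepsw} in $x$ and use $\Pedg(t,\pm\infty)=0$ to kill the boundary term. The paper integrates over $(-\infty,x)$ and then sends $x\to\infty$, while you integrate directly over $\R$; the difference is cosmetic.

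For \eqref{eq:P-pxP-intfy-1} the paper simply cites an external reference (\cite[Lemma~3.1]{Cd}) and gives no argument, so there is nothing to compare against directly. Your sketch, however, contains a circularity: you propose to show $\Pedg(t,\infty)=0$ by invoking the zero-mean condition on the data together with \eqref{eq:up-1} ``to be proved in tandem'', but your own proof of \eqref{eq:up-1} already uses $\Pedg(t,\infty)=0$ to make $\int_\R\px\Pedg\,dx$ vanish. Saying ``in tandem'' does not close the loop; from the identity $\Pedg(t,\infty)=\int_\R\uedg\,dx+\delta\frac{d}{dt}\int_\R\Pedg\,dx$ alone you only learn that the two sides are equal, not that either of them is zero. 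To break the circle you need an independent argument for $\Pedg(t,\infty)=0$ --- for instance, solve the linear equation $-\delta\pt\Pedg+\px\Pedg=\uedg$ explicitly via characteristics, using the boundary datum at $x=-\infty$ and the initial datum $P_{\eps,\delta,\gamma,0}$, and read the decay at $+\infty$ off the representation formula together with the decay of $\uedg$ and the zero-mean assumptions in \eqref{eq:u0eps}. You correctly identify this spatial decay as the main obstacle, but the appeal to \eqref{eq:up-1} inside the argument for \eqref{eq:P-pxP-intfy-1} is a genuine gap that must be removed before the rest goes through.
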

\begin{proof}
Arguing as \cite[Lemma $3.1$]{Cd}, we have \eqref{eq:P-pxP-intfy-1}.

Let us show that \eqref{eq:up-1} holds. Integrating the second equation in \eqref{eq:OHepsw} on $(-\infty,x)$, we have
\begin{equation}
\label{eq:1}
\begin{split}
\Pedg(t,x)=&\int_{-\infty}^{x}\uedg(t,y)dy  +\delta\int_{-\infty}^{x}\pt\Pedg(t,y) dy\\
 =& \int_{-\infty}^{x}\uedg(t,y)dy +\delta\frac{d}{dt}\int_{-\infty}^{x}\Pedg(t,y) dy.
\end{split}
\end{equation}
Therefore, \eqref{eq:up-1} follows from \eqref{eq:P-pxP-intfy-1} and \eqref{eq:1}.
\end{proof}

\begin{lemma}\label{lm:u-media-nulla}
For each $t>0$,
\begin{equation}
\label{eq:Pmedianilla}
\int_{\R}\Pedg(t,x)dx=0.
\end{equation}
In particular, we have that
\begin{equation}
\label{u-media-nulla-1}
\int_{\R}\uedg(t,x)dx=0, \quad t>0.
\end{equation}
\end{lemma}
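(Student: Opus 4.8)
The plan is to derive a closed second-order linear ODE for the scalar quantity
\begin{equation*}
m(t):=\int_{\R}\Pedg(t,x)\,dx
\end{equation*}
and to show that it has zero Cauchy data, hence vanishes identically; \eqref{eq:Pmedianilla} is then immediate and \eqref{u-media-nulla-1} follows from \eqref{eq:up-1}.

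First I would integrate the first equation of \eqref{eq:OHepsw} over $x\in\R$. Using the decay of $\uedg(t,\cdot)$ and $\px\uedg(t,\cdot)$ at $\pm\infty$ together with the integrability in $x$ of $\uedg$ and $\px f(\uedg)$ (part of the regularity package for the classical solution, obtained for the analogous problem in \cite{Cd}, the same reference invoked for Lemma \ref{lm:cns-1}), the flux contribution $\int_{\R}\px f(\uedg)\,dx=f(\uedg(t,\infty))-f(\uedg(t,-\infty))$ and the viscous contribution $\eps\int_{\R}\pxx\uedg\,dx=\eps\big(\px\uedg(t,\infty)-\px\uedg(t,-\infty)\big)$ both vanish, leaving
\begin{equation*}
\frac{d}{dt}\int_{\R}\uedg(t,x)\,dx=\gamma\int_{\R}\Pedg(t,x)\,dx=\gamma\,m(t).
\end{equation*}
Combining this with \eqref{eq:up-1}, which reads $\int_{\R}\uedg(t,x)\,dx=-\delta\,m'(t)$, I obtain $-\delta\,m''(t)=\gamma\,m(t)$, that is
\begin{equation*}
m''(t)+\frac{\gamma}{\delta}\,m(t)=0,\qquad t>0,
\end{equation*}
a harmonic-oscillator equation since $\gamma/\delta>0$.

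The Cauchy data are read off from \eqref{eq:u0eps}: on one hand $m(0)=\int_{\R}P_{\eps,\delta,\gamma,0}(x)\,dx=0$; on the other hand, evaluating \eqref{eq:up-1} at $t=0$ gives $-\delta\,m'(0)=\int_{\R}u_{\eps,\delta,\gamma,0}(x)\,dx=0$, so $m'(0)=0$. A linear second-order ODE with vanishing initial position and velocity has only the trivial solution, hence $m\equiv 0$, which is \eqref{eq:Pmedianilla}. Feeding $m\equiv 0$ back into \eqref{eq:up-1} yields \eqref{u-media-nulla-1}.

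The only genuinely delicate point is the justification of the vanishing of the spatial boundary terms and of differentiation under the integral sign in the step above; both reduce to the decay and $L^1_x$ bounds on $\uedg$, $\px\uedg$ coming from the construction of the classical solution (as already used for Lemma \ref{lm:cns-1}). Everything else is elementary ODE theory, so I would keep the write-up short.
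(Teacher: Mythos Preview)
Your proposal is correct and follows essentially the same approach as the paper: both derive the harmonic-oscillator ODE $m''+\frac{\gamma}{\delta}m=0$ for $m(t)=\int_{\R}\Pedg(t,x)\,dx$ by combining the spatial integral of the first equation in \eqref{eq:OHepsw} with \eqref{eq:up-1}, then use the zero initial data from \eqref{eq:u0eps} to conclude $m\equiv 0$, and finally read off \eqref{u-media-nulla-1} from \eqref{eq:up-1}. Your write-up is in fact a bit cleaner, invoking ODE uniqueness directly rather than writing out the general solution and solving for the constants.
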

\begin{proof}
Let $t>0$. Integrating the first equation in \eqref{eq:OHepsw} on $\R$, we have
\begin{equation}
\label{eq:2}
\int_{\R}\pt\uedg(t,x)dx =\frac{d}{dt}\int_{\R}\uedg(t,x)dx = \gamma \int_{\R}\Pedg(t,x) dx.
\end{equation}
Differentiating \eqref{eq:up-1} with respect to $t$, we get
\begin{equation}
\label{eq:3}
\frac{d}{dt}\int_{\R}\uedg(t,x)dx = -\delta\frac{d^2}{d^2t}\int_{\R}\Pedg(t,x)dx.
\end{equation}
Therefore, \eqref{eq:2} and \eqref{eq:3} give
\begin{equation*}
\frac{d^2}{d^2t}\int_{\R}\Pedg(t,x)dx+\frac{\gamma}{\delta}\int_{\R}\Pedg(t,x) dx=0.
\end{equation*}
Then,
\begin{equation}
\label{eq:sol1}
\int_{\R}\Pedg(t,x) dx =C_{1}\cos\left(\sqrt{\frac{\gamma}{\delta}}t\right)+C_{2}\sin \left(\sqrt{\frac{\gamma}{\delta}}t\right),
\end{equation}
where $C_1,\, C_2$ are two constants.

It follows from \eqref{eq:u0eps} that
\begin{equation}
\label{eq:pt-int-P}
\frac{d}{dt}\int_{\R}P_{\eps,\delta,\gamma,0}(x) dx =0.
\end{equation}
Thanks to \eqref{eq:u0eps} and \eqref{eq:pt-int-P}, to compute $C_1,\,C_2$, we must solve the following system:
\begin{equation}
\label{eq:sist-1}
\begin{cases}
\displaystyle &C_{1}\cos\left(\sqrt{\frac{\gamma}{\delta}}t\right)+C_{2}\sin \left(\sqrt{\frac{\gamma}{\delta}}t\right)=0,\\
\displaystyle &-C_{1}\sqrt{\frac{\gamma}{\delta}}\sin \left(\sqrt{\frac{\gamma}{\delta}}t\right)+C_{2}\sqrt{\frac{\gamma}{\delta}}\cos\left(\sqrt{\frac{\gamma}{\delta}}t\right)=0.
\end{cases}
\end{equation}
\eqref{eq:sist-1} says that
\begin{equation}
\label{eq:val-cost}
C_1=C_2=0.
\end{equation}
Then, \eqref{eq:sol1} and \eqref{eq:val-cost} give \eqref{eq:Pmedianilla}.

Finally, let us show that \eqref{u-media-nulla-1} holds. Differentiating \eqref{eq:Pmedianilla} with respect to $t$, we get
\begin{equation}
\label{eq:pt-nullo}
\frac{d}{dt}\int_{\R}\Pedg(t,x)dx=0, \quad t>0.
\end{equation}
Therefore, \eqref{u-media-nulla-1} follows from \eqref{eq:up-1} and \eqref{eq:pt-nullo}.
\end{proof}

\begin{lemma}\label{lm:stima-l-2}
For each $t>0$,
\begin{equation}
\label{stima-l-2}
\norm{\uedg(t,\cdot)}^2_{L^2(\R)} + \delta\gamma\norm{\Pedg(t,\cdot)}^2_{L^2(\R)} +2\eps\int_{0}^{t}\norm{\px\uedg(s,\cdot)}^2_{L^2(\R)}dx\le C_{0}.
\end{equation}
Moreover, fixed $T>0$, there exists $C(T)>0$, independent on $\eps$, $\delta$ and $\gamma$, such that
\begin{equation}
\label{eq:P-l-2-1}
\norm{\Pedg}_{L^{2}((0,T)\times\R)}\le \frac{C(T)}{\delta^{\frac{1}{2}}\gamma^{\frac{1}{2}}}, \quad 0<t<T.
\end{equation}
\end{lemma}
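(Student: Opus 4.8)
The plan is to perform a weighted $L^2$ (energy) estimate on the parabolic regularization \eqref{eq:OHepsw}, exploiting the second equation to absorb the nonlocal source term $\gamma\Pedg$ into a time derivative. First I would multiply the first equation in \eqref{eq:OHepsw} by $\uedg$ and integrate over $\R$. The convective contribution $\int_\R\uedg\px f(\uedg)\dx=-\int_\R f(\uedg)\px\uedg\dx$ is the integral of a total $x$-derivative, hence vanishes by the decay of the classical solution $\uedg(t,\cdot)$ at infinity; the viscous term contributes $\eps\int_\R\uedg\pxx\uedg\dx=-\eps\norm{\px\uedg(t,\cdot)}_{L^2(\R)}^2$. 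This gives
\begin{equation*}
\frac{1}{2}\frac{d}{dt}\norm{\uedg(t,\cdot)}_{L^2(\R)}^2+\eps\norm{\px\uedg(t,\cdot)}_{L^2(\R)}^2=\gamma\int_\R\uedg\Pedg\dx .
\end{equation*}

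Next I would rewrite the right-hand side using the second equation in \eqref{eq:OHepsw} in the form $\uedg=\px\Pedg-\delta\pt\Pedg$, obtaining
\begin{equation*}
\gamma\int_\R\uedg\Pedg\dx=\gamma\int_\R\Pedg\px\Pedg\dx-\gamma\delta\int_\R\Pedg\pt\Pedg\dx=-\frac{\gamma\delta}{2}\frac{d}{dt}\norm{\Pedg(t,\cdot)}_{L^2(\R)}^2,
\end{equation*}
since $\int_\R\Pedg\px\Pedg\dx=\frac{1}{2}\int_\R\px(\Pedg^2)\dx=0$ by Lemma \ref{lm:cns-1} (the vanishing of $\Pedg$ at $x=\pm\infty$ is exactly what is needed here). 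Plugging this back in, we arrive at the energy identity
\begin{equation*}
\frac{1}{2}\frac{d}{dt}\Big(\norm{\uedg(t,\cdot)}_{L^2(\R)}^2+\delta\gamma\norm{\Pedg(t,\cdot)}_{L^2(\R)}^2\Big)+\eps\norm{\px\uedg(t,\cdot)}_{L^2(\R)}^2=0,
\end{equation*}
and integrating over $(0,t)$, together with the third line of \eqref{eq:u0eps} (which bounds $\norm{u_{\eps,\delta,\gamma,0}}_{L^2(\R)}^2+\delta\gamma\norm{P_{\eps,\delta,\gamma,0}}_{L^2(\R)}^2$ by $C_0$), yields \eqref{stima-l-2}.

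The bound \eqref{eq:P-l-2-1} is then immediate: \eqref{stima-l-2} gives $\norm{\Pedg(t,\cdot)}_{L^2(\R)}^2\le C_0/(\delta\gamma)$ for every $t>0$, and integrating this in $t$ over $(0,T)$ produces $\norm{\Pedg}_{L^2((0,T)\times\R)}^2\le C_0T/(\delta\gamma)$, i.e. the asserted estimate with $C(T)=\sqrt{C_0T}$.

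The argument is essentially a one-line energy computation, so there is no serious obstacle; the only points requiring care are the justification that every boundary term at $x=\pm\infty$ vanishes — which relies on Lemma \ref{lm:cns-1} and on the integrability and decay of the classical solution $(\uedg,\Pedg)$ — and keeping the precise weight $\delta\gamma$ in front of $\norm{\Pedg(t,\cdot)}_{L^2(\R)}^2$, so that it matches the a priori bound deliberately built into \eqref{eq:u0eps}.
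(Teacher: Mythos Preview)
Your proof is correct and follows essentially the same approach as the paper: multiply the first equation of \eqref{eq:OHepsw} by $\uedg$, use the second equation (equivalently, multiply it by $\Pedg$) to rewrite $\gamma\int_\R\uedg\Pedg\dx$ as $-\frac{\delta\gamma}{2}\frac{d}{dt}\norm{\Pedg(t,\cdot)}_{L^2(\R)}^2$, combine into a single energy identity, and integrate in time. The derivation of \eqref{eq:P-l-2-1} by integrating the pointwise-in-$t$ bound over $(0,T)$ is likewise exactly what the paper does.
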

\begin{proof}
Let $t>0$. Multiplying by $\Pedg$ the second equation in \eqref{eq:OHepsw}, we have
\begin{equation}
-\delta\Pedg\pt\Pedg + \Pedg\px\Pedg =\uedg\Pedg.
\label{eq:up-2}
\end{equation}
Due to \eqref{eq:P-pxP-intfy-1}, an integration on $\R$ gives
\begin{equation}
\label{eq:up-4}
\begin{split}
-\frac{d}{dt}\left(\frac{\delta}{2}\norm{\Pedg(t,\cdot)}^2_{L^2(\R)}\right)=&\int_{\R}\uedg\Pedg dx -\frac{1}{2}\int_{\R}\px\left(\Pedg^2\right)dx\\
=&\int_{\R}\uedg\Pedg dx.
\end{split}
\end{equation}
Multiplying by $\uedg$ the first equation in \eqref{eq:OHepsw}, an integration on $\R$ gives
\begin{align*}
\frac{1}{2}\frac{d}{dt}\norm{\uedg(t,\cdot)}^2_{L^2(\R)}= &\int_{\R}\uedg\pt\uedg dx \\
=& -\int_{\R}\uedg f'(\uedg)\px\uedg dx + \gamma\int_{\R}\uedg\Pedg dx\\
& +\eps\int_{\R}\uedg\pxx\uedg dx\\
=& \gamma\int_{\R}\uedg\Pedg dx -\eps\norm{\px\uedg(t,x)}^2_{L^{2}(\R)},
\end{align*}
that is
\begin{equation}
\label{eq:5}
\frac{d}{dt}\norm{\uedg(t,\cdot)}^2_{L^2(\R)}+2\eps\norm{\px\uedg(t,x)}^2_{L^{2}(\R)}=  2\gamma\int_{\R}\uedg\Pedg dx.
\end{equation}
It follows from \eqref{eq:up-4} and \eqref{eq:5} that
\begin{equation}
\label{eq:12}
\frac{d}{dt}\left(\norm{\uedg(t,\cdot)}^2_{L^2(\R)} + \delta\gamma\norm{\Pedg(t,\cdot)}^2_{L^2(\R)}\right) +2\eps\norm{\px\uedg(t,x)}^2_{L^{2}(\R)}=0.
\end{equation}
Integrating \eqref{eq:12} on $(0,t)$, from \eqref{eq:u0eps}, we have \eqref{stima-l-2}.

Finally, we prove \eqref{eq:P-l-2-1}. Let $T>0$. We begin by observing that, from \eqref{stima-l-2}, we have that
\begin{equation*}
\delta\gamma\int_{\R}\Pedg^2 dx \le C_{0}.
\end{equation*}
An integration on $(0,T)$ gives
\begin{equation*}
\delta\gamma\int_{0}^{T}\!\!\!\int_{\R} \Pedg^2 dtdx \le C_{0}T= C(T),
\end{equation*}
that is  \eqref{eq:P-l-2-1}.
\end{proof}
\begin{lemma}\label{lm:10}
Let $T>0$. There exists $C(T)>0$, independent on $\eps$, $\delta$ and $\gamma$, such that
\begin{equation}
\label{eq:Px-in-l2}
\norm{\px\Pedg(t,\cdot)}_{L^2(\R)}\le \frac{C(T)}{\delta\sqrt{\eps}},
\end{equation}
for every $0<t<T$. Moreover,
\begin{equation}
\label{eq:p-l-infty}
\norm{\Pedg}_{L^{\infty}((0,T)\times\R)}\le \frac{C(T)}{\delta^{\frac{3}{4}}\gamma^{\frac{1}{4}}\eps^{\frac{1}{4}}}.
\end{equation}
\end{lemma}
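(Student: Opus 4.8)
The plan is to establish the gradient bound $\eqref{eq:Px-in-l2}$ first by an energy estimate on $\px\Pedg$ obtained from the second equation of $\eqref{eq:OHepsw}$, and then to deduce the sup-bound $\eqref{eq:p-l-infty}$ from it by a one-dimensional interpolation inequality combined with the $L^2$ bound on $\Pedg$ already contained in Lemma~\ref{lm:stima-l-2}.

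For $\eqref{eq:Px-in-l2}$ I would differentiate the second equation of $\eqref{eq:OHepsw}$ in $x$, getting $-\delta\pt\px\Pedg+\pxx\Pedg=\px\uedg$, multiply by $\px\Pedg$ and integrate over $\R$. By $\eqref{eq:P-pxP-intfy-1}$ one has $\int_\R\px\Pedg\,\pxx\Pedg\dx=\frac12\int_\R\px\big((\px\Pedg)^2\big)\dx=0$, whence
\begin{equation*}
-\frac{\delta}{2}\frac{d}{dt}\norm{\px\Pedg(t,\cdot)}_{L^2(\R)}^2=\int_\R\px\Pedg\,\px\uedg\dx\le \norm{\px\Pedg(t,\cdot)}_{L^2(\R)}\norm{\px\uedg(t,\cdot)}_{L^2(\R)}.
\end{equation*}
Writing $y(t)=\norm{\px\Pedg(t,\cdot)}_{L^2(\R)}^2$, this reads $\delta\,\frac{d}{dt}\sqrt{y(t)}\le\norm{\px\uedg(t,\cdot)}_{L^2(\R)}$ (after the standard regularization, replacing $\sqrt{y}$ by $\sqrt{y+\mu}$ and letting $\mu\downarrow 0$, which disposes of the possible zeros of $y$); integrating on $(0,t)$ and applying Cauchy--Schwarz in time,
\begin{equation*}
\delta\norm{\px\Pedg(t,\cdot)}_{L^2(\R)}\le \delta\norm{\px P_{\eps,\delta,\gamma,0}}_{L^2(\R)}+\sqrt{T}\left(\int_0^T\norm{\px\uedg(s,\cdot)}_{L^2(\R)}^2\ds\right)^{1/2}.
\end{equation*}
The first term on the right is $\le\sqrt{\delta C_0}\le\sqrt{C_0}$ by $\eqref{eq:u0eps}$ and $\delta<1$, while the second is $\le\sqrt{T}\,(C_0/2\eps)^{1/2}$ by $\eqref{stima-l-2}$; since $\eps<1$, both contributions are absorbed into $C(T)/\sqrt\eps$, which gives $\eqref{eq:Px-in-l2}$.

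For $\eqref{eq:p-l-infty}$ I would use that, since $\Pedg(t,-\infty)=0$, for every $x\in\R$
\begin{equation*}
\Pedg(t,x)^2=2\int_{-\infty}^x\Pedg\,\px\Pedg\dy\le 2\norm{\Pedg(t,\cdot)}_{L^2(\R)}\norm{\px\Pedg(t,\cdot)}_{L^2(\R)},
\end{equation*}
so that $\norm{\Pedg(t,\cdot)}_{L^\infty(\R)}^2\le 2\norm{\Pedg(t,\cdot)}_{L^2(\R)}\norm{\px\Pedg(t,\cdot)}_{L^2(\R)}$. Inserting $\norm{\Pedg(t,\cdot)}_{L^2(\R)}\le C_0^{1/2}\delta^{-1/2}\gamma^{-1/2}$ from $\eqref{stima-l-2}$ and the estimate $\eqref{eq:Px-in-l2}$ just proved gives $\norm{\Pedg(t,\cdot)}_{L^\infty(\R)}^2\le C(T)\,\delta^{-3/2}\gamma^{-1/2}\eps^{-1/2}$; taking square roots and then the supremum over $t\in(0,T)$ yields $\eqref{eq:p-l-infty}$.

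The only genuinely delicate point is the energy identity in the first step: one must justify that $\px\uedg$, $\px\Pedg$, $\pxx\Pedg$ decay at $\pm\infty$, so that the integration by parts is legitimate and the boundary terms vanish — this rests on $\eqref{eq:P-pxP-intfy-1}$ and on $(\uedg,\Pedg)$ being a classical (hence sufficiently decaying) solution of $\eqref{eq:OHepsw}$. Once the identity is in hand, the rest is bookkeeping with the powers of $\eps$, $\delta$, $\gamma$ and the already-established estimates, using repeatedly that these parameters are $<1$ to absorb lower-order terms into $C(T)$. (If one prefers to bypass the $\frac{d}{dt}\sqrt{\,\cdot\,}$ manipulation, one can instead split $\norm{\px\Pedg}\norm{\px\uedg}\le \tfrac{\delta}{2}\norm{\px\Pedg}^2+\tfrac1{2\delta}\norm{\px\uedg}^2$ and conclude by Grönwall, which produces the same power count.)
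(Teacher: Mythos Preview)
Your argument is correct and follows the same skeleton as the paper: differentiate the $P$-equation in $x$, multiply by $\px\Pedg$, integrate, use $\eqref{eq:P-pxP-intfy-1}$ to kill the boundary term, and then combine with $\eqref{stima-l-2}$; the sup-bound is obtained from the same one-dimensional interpolation $\|\Pedg\|_{L^\infty}^2\le 2\|\Pedg\|_{L^2}\|\px\Pedg\|_{L^2}$ and the $L^2$ bound on $\Pedg$ from Lemma~\ref{lm:stima-l-2}. The only cosmetic difference is that the paper processes the resulting differential inequality via the Young--Gr\"onwall route you mention parenthetically at the end, whereas you integrate $\frac{d}{dt}\sqrt{y}$ directly; both yield the same power count.

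One small slip in presentation: as written, your displayed inequality bounds $-\tfrac{\delta}{2}\tfrac{d}{dt}y$ from above, which is the wrong direction for what you need next. You should bound $\big|\int_\R\px\Pedg\,\px\uedg\dx\big|$ by Cauchy--Schwarz (or equivalently bound $+\tfrac{\delta}{2}\tfrac{d}{dt}y$), after which $\delta\,\tfrac{d}{dt}\sqrt{y}\le\|\px\uedg\|_{L^2}$ follows as you claim.
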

\begin{proof}
Let $0<t<T$. Differentiating the second equation in \eqref{eq:OHepsw} with respect to $x$, we have
\begin{equation}
\label{eq:15}
\delta\ptx\Pedg= -\px\uedg + \pxx\Pedg.
\end{equation}
Multiplying \eqref{eq:15} by $\px\Pedg$, an integration on $\R$ and \eqref{eq:P-pxP-intfy-1} give
\begin{equation}
\begin{split}
\frac{d}{dt}\left(\delta\norm{\px\Pedg(t,\cdot)}^2_{L^2(\R)}\right)=&-2\int_{\R}\px\uedg\px\Pedg dx +\int_{\R}\px\left(\px\Pedg\right)^2 dx \\
=&-2\int_{\R}\px\uedg\px\Pedg dx.
\end{split}
\end{equation}
Due to the Young inequality,
\begin{align*}
-2\int_{\R}\px\uedg\px\Pedg dx\le & 2\left\vert\int_{\R}\px\uedg\px\Pedg dx\right\vert\\
\le &2\int_{\R}\left\vert\frac{\px\uedg}{\sqrt{\delta}}\right\vert\left\vert\sqrt{\delta}\px\Pedg\right\vert dx\\
\le& \frac{1}{\delta}\norm{\px\uedg(t,\cdot)}^2_{L^2(\R)} +\delta\norm{\px\Pedg(t,\cdot)}^2_{L^2(\R)}.
\end{align*}
Therefore, we get
\begin{equation*}
\frac{d}{dt}\left(\delta\norm{\px\Pedg(t,\cdot)}^2_{L^2(\R)}\right)\le \frac{1}{\delta}\norm{\px\uedg(t,\cdot)}^2_{L^2(\R)} +\delta\norm{\px\Pedg(t,\cdot)}^2_{L^2(\R)},
\end{equation*}
that is
\begin{equation*}
\frac{d}{dt}\left(\delta\norm{\px\Pedg(t,\cdot)}^2_{L^2(\R)}\right)-\delta\norm{\px\Pedg(t,\cdot)}^2_{L^2(\R)}\le \frac{1}{\delta}\norm{\px\uedg(t,\cdot)}^2_{L^2(\R)}.
\end{equation*}
The Gronwall Lemma and \eqref{eq:u0eps} give
\begin{equation}
\begin{split}
\label{eq:23}
\delta\norm{\px\Pedg(t,\cdot)}^2_{L^2(\R)}\le& C_{0}e^{t}+ \frac{e^{t}}{\delta}\int_{0}^{t}e^{-s}\norm{\px\uedg(s,\cdot)}^2_{L^2(\R)}ds\\
\le &C(T)+\frac{C(T)}{\delta}\int_{0}^{t}\norm{\px\uedg(s,\cdot)}^2_{L^2(\R)}ds.
\end{split}
\end{equation}
Due to \eqref{stima-l-2},
\begin{equation}
\label{eq:19}
\frac{1}{\delta} \int_{0}^{t}\norm{\px\uedg(s,\cdot)}^2_{L^2(\R)}ds= \frac{\eps}{\delta\eps}\int_{0}^{t}\norm{\px\uedg(s,\cdot)}^2_{L^2(\R)}ds \le \frac{C_{0}}{\delta\eps}.
\end{equation}
Since $0<\eps,\, \delta <1$, it follows from \eqref{eq:23} and \eqref{eq:19} that
\begin{equation*}
\delta\norm{\px\Pedg(t,\cdot)}^2_{L^2(\R)}\le C(T)\left(1+\frac{1}{\delta\eps}\right)\le C(T)\left(\frac{\delta\eps+1}{\delta\eps}\right)\le \frac{C(T)}{\delta\eps}.
\end{equation*}
Hence,
\begin{equation*}
\norm{\px\Pedg(t,\cdot)}^2_{L^2(\R)}\le \frac{C(T)}{\delta^2\eps},
\end{equation*}
which gives \eqref{eq:Px-in-l2}.\\
Let us show that \eqref{eq:p-l-infty} holds. We begin by observing that, thanks to the H\"older inequality,
\begin{equation}
\label{eq:holder}
\begin{split}
\Pedg^2(t,x)=& 2\int_{-\infty}^{x}\Pedg(t,y)\px\Pedg(t,y)dy\\
 \le &2\int_{\R}\vert \Pedg(t,y)\vert\vert\px\Pedg(t,y)dy\vert dx\\
 \le& \norm{\Pedg(t,\cdot)}_{L^{2}(\R)}\norm{\px\Pedg(t,\cdot)}_{L^{2}(\R)}.
\end{split}
\end{equation}
It follows from \eqref{stima-l-2} and \eqref{eq:Px-in-l2} that
\begin{equation*}
\norm{\Pedg}^2_{L^{\infty}((0,T)\times\R)} \le \frac{C_{0}}{\sqrt{\delta\gamma}}\frac{C(T)}{\delta\sqrt{\eps}}\le \frac{C(T)}{\delta^{\frac{3}{2}}\gamma^{\frac{1}{2}}\eps^{\frac{1}{2}}},
\end{equation*}
which gives \eqref{eq:p-l-infty}.
\end{proof}
\begin{lemma}\label{lm:u-l-infty}
Let $T>0$. Assume \eqref{eq:def-di-gamma}. Then, there exists  $C(T)>0$, independent on $\eps$, $\delta$ and $\gamma$, such that
\begin{equation}
\label{eq:u-in-l-infty}
\norm{\uedg}_{L^{\infty}((0,T)\times\R)}\le \norm{u_{0}}_{L^{\infty}(\R)}+ C(T).
\end{equation}
\end{lemma}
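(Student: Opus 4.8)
The plan is to read the first equation in \eqref{eq:OHepsw} as a viscous scalar conservation law for $\uedg$ with the \emph{bounded} forcing term $\gamma\Pedg$, and to conclude by a comparison (maximum) principle. Concretely, I would introduce the space-independent function
\[
W(t) = \norm{u_{\eps,\delta,\gamma,0}}_{L^{\infty}(\R)} + \int_{0}^{t}\gamma\norm{\Pedg(s,\cdot)}_{L^{\infty}(\R)}\,ds,
\]
and check that $W$ is a supersolution of the first equation in \eqref{eq:OHepsw}: since $\px W=\pxx W=0$ one has $\pt W + \px f(W) - \eps\pxx W = \gamma\norm{\Pedg(t,\cdot)}_{L^{\infty}(\R)} \ge \gamma\Pedg(t,x)$, while by \eqref{eq:u0eps} its initial value dominates $\uedg(0,\cdot)=u_{\eps,\delta,\gamma,0}$. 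Symmetrically, $-W$ is a subsolution lying below $\uedg(0,\cdot)$. The comparison principle for the parabolic problem then yields $\abs{\uedg(t,x)}\le W(t)$ for all $t>0$, $x\in\R$. Note there is no circularity here: the bound \eqref{eq:p-l-infty} on $\norm{\Pedg}_{L^{\infty}}$ in Lemma \ref{lm:10} was obtained from the $L^{2}$ estimates alone, without using any $L^{\infty}$ control on $\uedg$.

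It then remains to estimate $W$. By \eqref{eq:p-l-infty} of Lemma \ref{lm:10}, for $0<t<T$,
\[
\int_{0}^{t}\gamma\norm{\Pedg(s,\cdot)}_{L^{\infty}(\R)}\,ds \le \gamma\,T\,\frac{C(T)}{\delta^{3/4}\gamma^{1/4}\eps^{1/4}} = \frac{C(T)\,\gamma^{3/4}}{\delta^{3/4}\eps^{1/4}},
\]
after absorbing $T$ into $C(T)$. Invoking the hypothesis \eqref{eq:def-di-gamma}, i.e. $\gamma=\mathcal{O}(\eps^{1/3}\delta)$, we get $\gamma^{3/4}=\mathcal{O}(\eps^{1/4}\delta^{3/4})$, whence the right-hand side is bounded by $C(T)$. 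Together with $\norm{u_{\eps,\delta,\gamma,0}}_{L^{\infty}(\R)}\le\norm{u_0}_{L^{\infty}(\R)}$ from \eqref{eq:u0eps}, this gives $\abs{\uedg(t,x)}\le\norm{u_0}_{L^{\infty}(\R)}+C(T)$, which is precisely \eqref{eq:u-in-l-infty}.

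I expect the only genuine technical point to be the rigorous justification of the comparison principle on the unbounded domain $\R$; this is standard and can be handled by adding a vanishing penalization of the type $\mu(1+x^{2})$ and letting $\mu\to0$, or by exploiting the spatial decay of the classical solution $\uedg$. The substantive content is the exact matching of the powers of $\eps,\delta,\gamma$ in the displayed estimate: this is exactly why the scaling \eqref{eq:def-di-gamma} is imposed, $\eps^{1/3}\delta$ being the largest admissible size of $\gamma$ for which the time integral of $\gamma\norm{\Pedg(t,\cdot)}_{L^{\infty}(\R)}$ remains bounded independently of the parameters.
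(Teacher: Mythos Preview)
Your proposal is correct and follows essentially the same route as the paper: bound the forcing term via \eqref{eq:p-l-infty}, use the scaling \eqref{eq:def-di-gamma} to make $\gamma\norm{\Pedg}_{L^\infty}$ uniformly bounded, and apply the parabolic comparison principle. The paper's supersolution is the slightly simpler ${\mathcal F}(t)=\norm{u_0}_{L^\infty(\R)}+C(T)t$ (obtained after first reducing $\gamma\norm{\Pedg}_{L^\infty}\le C(T)$), whereas you keep the integral $\int_0^t\gamma\norm{\Pedg(s,\cdot)}_{L^\infty}\,ds$ and estimate it afterwards; the two are equivalent.
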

\begin{proof}
We begin by observing that, from \eqref{eq:def-di-gamma} and \eqref{eq:p-l-infty}, we have
\begin{equation*}
\pt \uedg +\px f(\uedg) -\eps\pxx\uedg \le \gamma\norm{\Pedg}_{L^{\infty}((0,T)\times\R)}\le \frac{\gamma^{\frac{3}{4}} C(T)}{\delta^{\frac{3}{4}}\eps^{\frac{1}{4}}}\le C(T).
\end{equation*}
Since the map
\begin{equation*}
{\mathcal F}(t):=\norm{u_{0}}_{L^\infty(\R)}+ C(T)t,
\end{equation*}
solves the equation
\begin{equation*}
\frac{d{\mathcal F}}{dt}=C(T)
\end{equation*}
and
\begin{equation*}
\max\{\uedg(0,x),0\}\le {\mathcal F}(t),\qquad (t,x)\in (0,T)\times\R,
\end{equation*}
the comparison principle for parabolic equations implies that
\begin{equation*}
 \uedg(t,x)\le {\mathcal F}(t),\qquad (t,x)\in (0,T)\times\R.
\end{equation*}
In a similar way, we can prove that
\begin{equation*}
\uedg(t,x)\ge -{\mathcal F}(t),\qquad (t,x)\in (0,T)\times\R.
\end{equation*}
Therefore,
\begin{equation*}
\vert\uedg(t,x)\vert\le\norm{u_{0}}_{L^\infty(\R)}+ C(T)t\le\norm{u_{0}}_{L^\infty(\R)}+ C(T),
\end{equation*}
which gives \eqref{eq:u-in-l-infty}.
\end{proof}
To prove Theorem \ref{th:main-1}, the following technical lemma is needed  \cite{Murat:Hneg}.
\begin{lemma}
\label{lm:1}
Let $\Omega$ be a bounded open subset of $
\R^2$. Suppose that the sequence $\{\mathcal
L_{n}\}_{n\in\mathbb{N}}$ of distributions is bounded in
$W^{-1,\infty}(\Omega)$. Suppose also that
\begin{equation*}
\mathcal L_{n}=\mathcal L_{1,n}+\mathcal L_{2,n},
\end{equation*}
where $\{\mathcal L_{1,n}\}_{n\in\mathbb{N}}$ lies in a
compact subset of $H^{-1}_{loc}(\Omega)$ and
$\{\mathcal L_{2,n}\}_{n\in\mathbb{N}}$ lies in a
bounded subset of $\mathcal{M}_{loc}(\Omega)$. Then $\{\mathcal
L_{n}\}_{n\in\mathbb{N}}$ lies in a compact subset of $H^{-1}_{loc}(\Omega)$.
\end{lemma}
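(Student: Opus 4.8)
This is Murat's lemma from \cite{Murat:Hneg}; here is the line of argument I would write out. The first move is to reduce to a cut-off statement: it suffices to prove that for every $\test\in C_c^\infty(\Omega)$ the sequence $\seq{\test\mathcal L_n}_{n\in\N}$, extended by zero to $\R^2$, is precompact in $H^{-1}(\R^2)$. Granting this, one exhausts $\Omega$ by open sets $\Omega_j$ with $\overline{\Omega_j}$ compact and $\overline{\Omega_j}\subset\Omega_{j+1}$, picks $\test_j\in C_c^\infty(\Omega_{j+1})$ with $\test_j\equiv1$ on $\Omega_j$, and runs a diagonal extraction over $j$, so that a single subsequence of $\seq{\mathcal L_n}$ converges in $H^{-1}$ on every $\Omega_j$, i.e.\ in $\Hneg(\Omega)$.

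So I would fix $\test$, set $K=\supp\test$, and split $\test\mathcal L_n=\test\mathcal L_{1,n}+\test\mathcal L_{2,n}$. Since multiplication by the fixed smooth function $\test$ is bounded on each of the spaces involved, $\seq{\test\mathcal L_{1,n}}$ is precompact in $H^{-1}(\R^2)=W^{-1,2}$, hence in $W^{-1,q}(\R^2)$ for every $1\le q\le2$ (continuous embedding on a bounded set); $\seq{\test\mathcal L_{2,n}}$ is a bounded sequence of Radon measures supported in $K$; and $\seq{\test\mathcal L_n}$ is bounded in $W^{-1,\infty}$, hence --- $\Omega$ being bounded --- in $W^{-1,p}(\R^2)$ for every finite $p$. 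The one genuinely nontrivial input here is the \emph{compact} embedding, special to the planar case,
\[
\seq{\mu\in\CM(\R^2):\ \supp\mu\subset K}\ \hookrightarrow\hookrightarrow\ W^{-1,q}(\R^2),\qquad 1\le q<2,
\]
which I would obtain as the transpose (via Schauder's theorem) of the Rellich--Morrey compact embedding $W^{1,q'}_0\hookrightarrow\hookrightarrow C_0$ valid for $q'>2$; concretely, if $\mu_n\weakstar\mu$ in $\CM(K)$ then, the unit ball of $W^{1,q'}_0$ restricting to a precompact subset of $C(K)$ and weak-$\star$ convergence of a bounded sequence being uniform on precompact subsets of $C(K)$, one gets $\norm{\mu_n-\mu}_{W^{-1,q}}=\sup\Set{\langle\mu_n-\mu,\psi\rangle:\norm{\psi}_{W^{1,q'}_0}\le1}\to0$. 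Consequently $\seq{\test\mathcal L_{2,n}}$, and therefore $\seq{\test\mathcal L_n}$, is precompact in $W^{-1,q}(\R^2)$ for each fixed $q\in[1,2)$.

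The last step upgrades $W^{-1,q}$-precompactness to $W^{-1,2}$-precompactness, and this is exactly where the uniform $W^{-1,\infty}$ bound is spent. Fixing $q\in(1,2)$ and $p\in(2,\infty)$ and choosing $\theta\in(0,1)$ with $\tfrac12=\tfrac{1-\theta}{q}+\tfrac{\theta}{p}$, interpolation between the reflexive negative Sobolev spaces gives $\norm{v}_{W^{-1,2}}\le C\,\norm{v}_{W^{-1,q}}^{1-\theta}\,\norm{v}_{W^{-1,p}}^{\theta}$. Then, starting from an arbitrary subsequence, I would extract a further subsequence Cauchy in $W^{-1,q}$ (possible by the precompactness just established); since it is also bounded in $W^{-1,p}$, applying the interpolation inequality to its differences makes it Cauchy in $W^{-1,2}=H^{-1}$. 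This yields precompactness of $\seq{\test\mathcal L_n}$ in $H^{-1}$, and the reduction of the first paragraph finishes the proof. The two steps where something must actually be proved rather than merely bookkept are the compactness of the embedding of compactly supported measures into $W^{-1,q}$ for $q<2$ --- which is where the hypothesis ``$\Omega\subset\R^2$'' is used, through the subcritical Sobolev embedding into continuous functions --- and the negative-order interpolation inequality; the cut-off reduction, the diagonal extraction, and the boundedness of the multiplication operators are routine.
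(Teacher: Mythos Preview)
Your argument is correct and is essentially Murat's original proof. Note, however, that the paper does not itself prove this lemma: it is stated as a technical tool and attributed to \cite{Murat:Hneg}, with no proof given in the text. So there is nothing to compare against beyond observing that your write-up faithfully reproduces the classical route---cut-off reduction, compactness of compactly supported measures in $W^{-1,q}$ for $q<2$ via the dual of the Morrey--Rellich embedding (this is where the planar dimension enters), and then interpolation against the uniform $W^{-1,\infty}$ bound to reach $H^{-1}$.
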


Now, we are ready for the proof of Theorem \ref{th:main-1}.
\begin{proof}[Proof of Theorem \ref{th:main-1}]
Let $\eta:\R\to\R$ be any convex $C^2$ entropy function, and
$q:\R\to\R$ be the corresponding entropy
flux defined by $q'=f'\eta'$. By multiplying the first equation in \eqref{eq:OHepsw} with
$\eta'(\uedg)$ and using the chain rule, we get
\begin{equation}
\label{eq:30}
    \pt  \eta(\uedg)+\px q(\uedg)
    =\underbrace{\eps \pxx \eta(\uedg)}_{=:\CL_{1,\eps,\,\delta,\,\gamma}}
    \, \underbrace{-\eps \eta''(\uedg)\left(\px  \uedg\right)^2}_{=: \CL_{2,\eps,\,\delta,\gamma}}
     \, \underbrace{+\gamma\eta'(\uedg) \Pedg}_{=: \CL_{3,\eps,\,\delta,\,\gamma}},
\end{equation}
where  $\CL_{1,\eps,\,\delta,\gamma}$, $\CL_{2,\eps,\,\delta,\,\gamma}$, $\CL_{3, \eps,\,\delta,\,\gamma}$ are distributions.

Let us show that
\begin{equation*}
\label{eq:H1}
\textrm{$\CL_{1,\eps,\,\delta,\,\gamma}\to 0$ in $H^{-1}((0,T)\times\R)$, $T>0$.}
\end{equation*}
Since
\begin{equation*}
\eps\pxx\eta(\uedg)=\px(\eps\eta'(\uedg)\px\uedg),
\end{equation*}
from Lemmas \ref{lm:stima-l-2} and \ref{lm:u-l-infty},
\begin{align*}
\norm{\eps\eta'(\uedg)\px\uedg}^2_{L^2((0,T)\times (\R))}&\le\eps ^2\norm{\eta'}^2_{L^{\infty}(I_T)}\int_{0}^{T}\norm{\px\uedg(s,\cdot)}^2_{L^2(0,\infty)}ds\\
&\le\eps\norm{\eta'}^2_{L^{\infty}(I_T)}C_{0}\to 0,
\end{align*}
where
\begin{equation*}
I_T=\left(-\norm{u_{0}}_{L^{\infty}(\R)}- C(T),\norm{u_{0}}_{L^{\infty}(\R)}+ C(T)\right).
\end{equation*}
We claim that
\begin{equation*}
\label{eq:L1}
\textrm{$\{\CL_{2,\eps,\,\delta,\,\gamma}\}_{\eps,\,\delta,\,\gamma >0}$ is uniformly bounded in $L^1((0,T)\times\R))$, $T>0$}.
\end{equation*}
Again by Lemmas \ref{lm:stima-l-2} and \ref{lm:u-l-infty},
\begin{align*}
\norm{\eps\eta''(\uedg)(\px\uedg)^2}_{L^1((0,T)\times\R)}&\le
\norm{\eta''}_{L^{\infty}(I_T)}\eps
\int_{0}^{T}\norm{\px\uedg(s,\cdot)}^2_{L^2(\R)}ds\\
&\le \norm{\eta''}_{L^{\infty}(I_T)}C(T).
\end{align*}
We have that
\begin{equation*}
\textrm{$\{\CL_{3,\eps,\,\delta,\,\gamma}\}_{\eps,\,\delta>0}$ is uniformly bounded in $L^1_{loc}((0,T)\times (0,\infty))$, $T>0$.}
\end{equation*}
Let $K$ be a compact subset of $(0,T)\times (\R)$. From \eqref{eq:def-di-gamma} and \eqref{eq:p-l-infty},
\begin{align*}
\norm{\gamma\eta'(\uedg)\Pedg}_{L^1(K)}&=\gamma\int_{K}\vert\eta'(\uedg)\vert\vert\Pedg\vert
dtdx\\
&\leq \gamma
\norm{\eta'}_{L^{\infty}(I_T)}\norm{\Pe}_{L^{\infty}((0,T)\times\R)}\vert K \vert\\
&\le\frac{\gamma^{\frac{3}{4}}}{\delta^{\frac{3}{4}}\eps^{\frac{1}{4}}}C(T)\norm{\eta'}_{L^{\infty}(I_T)}\vert K \vert\\
&=C(T)\norm{\eta'}_{L^{\infty}(I_T)}\vert K \vert.
\end{align*}
Therefore, Lemma \ref{lm:1} implies that
\begin{equation}
\label{eq:GMC1}
    \text{$\left\{  \pt  \eta(\uedg)+\px q(\uedg)\right\}_{\eps,\,\delta,\,\gamma>0}$
    lies in a compact subset of $\Hneg((0,\infty)\times\R)$.}
\end{equation}
The $L^{\infty}$ bound stated in Lemma \ref{lm:u-l-infty}, \eqref{eq:GMC1} and the
 Tartar's compensated compactness method \cite{TartarI} give the existence of a subsequence
$\{\uedgk\}_{k\in\N}$ and a limit function $u\in L^{\infty}((0,T)\times\R)$
such that
\begin{equation}\label{eq:convu}
    \textrm{$\uedgk \to u$ a.e.~and in $L^{p}_{loc}((0,T)\times\R)$, $1\le p<\infty$}.
\end{equation}
Hence,
\begin{equation}
\label{eq:udelta-to-ueps}
 \textrm{$\uedgk \to u$  in $L^{\infty}((0,T)\times\R)$}.
\end{equation}
We conclude by proving that $u$ is unique entropy solution of \eqref{eq:cl}. Let $\phi\in C^{\infty}(\R^2)$ be a positive text function with compact support. We have to prove that
\begin{equation}
\label{eq:26}
\int_{0}^{\infty}\!\!\!\int_{\R}(\eta(u)\pt\phi +q(u)\px\phi)dtdx+\int_{\R}\eta\left(u_{0}(x)\right)\phi(0,x)dx\ge 0.
\end{equation}
From \eqref{eq:30}, we have
\begin{equation*}
\pt\eta(\uedgk) + \px q(\uedgk) \le \epsk\pxx\eta(\uedgk)+ \gk \eta'(\uedg) \Pedg.
\end{equation*}
Multiplying by $\phi$ and integrating on $(0,\infty)\times\R$, we have that
\begin{equation}
\label{eq:31}
\begin{split}
&\int_{0}^{\infty}\!\!\!\int_{\R}(\eta(\uedgk)\pt\phi +q(\uedgk)\px\phi)dtdx+\int_{\R}\eta\left(u_{0,\epsk,\,\dk,\,\gk}(x)\right)\phi(0,x)dx\\
&\qquad + \epsk\int_{0}^{\infty}\!\!\!\int_{\R}\eta(\uedgk)\pxx\phi dtdx+\gk\int_{0}^{\infty}\!\!\!\int_{\R}\eta'(\uedgk) \Pedgk\phi dtdx\ge 0.
\end{split}
\end{equation}
Let us show that
\begin{equation}
\label{eq:32}
\gk\int_{0}^{\infty}\!\!\!\int_{\R}\eta'(\uedgk) \Pedgk\phi dtdx\to 0.
\end{equation}
From \eqref{eq:def-di-gamma}, \eqref{eq:P-l-2-1}, \eqref{eq:u-in-l-infty} and the H\"older inequality, we get
\begin{align*}
&\gk\left\vert\int_{0}^{\infty}\!\!\!\int_{\R}\eta'(\uedgk) \Pedgk\phi dtdx\right\vert\\
&\quad\le \gk \int_{0}^{\infty}\!\!\!\int_{\R}\vert\eta'(\uedgk)\vert \vert\Pedgk\vert\vert\phi\vert dtdx\\
&\quad\le\gk\norm{\eta'}_{L^{\infty}(I_T)}\norm{\Pedgk}_{L^{2}(\supp(\phi))}\norm{\phi}_{L^2(\supp(\phi))}\\
&\quad \le \gk\norm{\eta'}_{L^{\infty}(I_T)}\norm{\Pedgk}_{L^{2}((0,T)\times\R)}\norm{\phi}_{L^2((0,T)\times\R)}\\
&\quad \le \frac{\gk^{\frac{1}{2}}}{\dk^{\frac{1}{2}}}C(T)\norm{\eta'}_{L^{\infty}(I_T)}\norm{\phi}_{L^2((0,T)\times\R)}\\
&\quad = \epsk^{\frac{1}{6}}C(T)\norm{\eta'}_{L^{\infty}(I_T)}\norm{\phi}_{L^2((0,T)\times\R)}\to 0,
\end{align*}
that is \eqref{eq:32}. 
Therefore, \eqref{eq:26} follows from \eqref{eq:u0eps}, \eqref{eq:u-in-l-infty}, \eqref{eq:31}, \eqref{eq:32} and the Lebesgue Dominated Convergence Theorem.

Finally, \eqref{u-media-nulla-1} and \eqref{eq:convu} give \eqref{eq:umedianulla}.
\end{proof}

\section{Ostrovsky equation: $\gamma\to 0$.}\label{sec:OE}
In this section, we consider the following Cauchy probelm
\begin{equation}
\label{eq:OE1}
\begin{cases}
u +\frac{1}{2}\px u^2 -\beta\pxxx u =\gamma P, &\quad t>0,\,x\in\R,\\
\px P=u  &\quad t>0,\,x\in\R,\\
P(t,-\infty)=0 &\quad t>0, \\
u(0,x)=u_0(x), &\quad x\in\R,
\end{cases}
\end{equation}
or equivalently,
\begin{equation}
\label{eq:OE2}
\begin{cases}
\pt u +\frac{1}{2}\px u^2 -\beta\pxxx u =\gamma \int_{-\infty}^{x} u(t,y) dy, &\quad t>0,\, x\in\R,\\
u(0,x)=u_0(x), &\quad x\in\R.
\end{cases}
\end{equation}
On the initial datum, we assume
\begin{equation}
\label{eq:assinit1}
u_0\in L^2(\R)\cap L^{4}(\R),\quad\int_{\R}u_{0}(x)dx=0,
\end{equation}
and on the function
\begin{equation}
\label{eq:def-di-P01}
P_{0}(x)=\int_{-\infty}^{x} u_{0}(y)dy, \quad x\in\R,
\end{equation}
we assume that
\begin{equation}
\label{eq:L-2P01}
\int_{\R}P_0(x)dx= \int_{\R}\left(\int_{-\infty}^{x}u_{0}(y)dy\right)dx=0.
\end{equation}
We observe that, if $\beta, \gamma \to 0$, then \eqref{eq:OE1} reads
\begin{equation}
\label{eq:Bu1}
\begin{cases}
\pt u +\frac{1}{2}\px u^2=0,&\quad t>0,\, x\in\R,\\
u(0,x)=u_0(x), &\quad x\in\R.
\end{cases}
\end{equation}
which is the Burges' equation.

Fix four small numbers $0<\eps, \beta,\delta,\gamma<1 $, and let $\uebdg=\uebdg(t,x)$ be the unique
classical solution of the following mixed problem:
\begin{equation}
\label{eq:OHepswb}
\begin{cases}
\pt \uebdg+\frac{1}{2} \px \uebdg^2 - \beta\pxxx\uebdg\\
\qquad=\gamma\Pebdg+ \eps\pxx\uebdg, &\quad t>0,\, x\in\R,\\
-\delta\pt\Pebdg +\px\Pebdg=\uebdg,&\quad t>0, x\in\R,\\
\Pebdg(t,-\infty)=0, &\quad t>0,\\
\uebdg(0,x)=u_{\eps,\beta, \delta, \gamma, 0}(x),&\quad x\in\R,
\end{cases}
\end{equation}
where $u_{\eps,\beta,\delta, \gamma, 0}$ is a $C^\infty$ approximation of $u_{0}$ such that
\begin{equation}
\label{eq:u0epsbeta}
\begin{split}
&u_{\eps,\,\beta,\,\delta,\,\gamma,0} \to u_{0} \quad  \textrm{in $L^{p}_{loc}(\R)$, $1\le p < 4$, as $\eps,\,\beta,\,\delta,\,\gamma \to 0$,}\\
&\norm{u_{\eps,\,\beta,\,\delta,\,\gamma,0}}^2_{L^2(\R)}+\delta\gamma\norm{P_{\eps,\,\beta,\,\delta,\,\gamma,0}}^2_{L^2(\R)}+
 \delta\norm{\px P_{\eps,\,\beta,\,\delta,\gamma,0}}^2_{L^2(\R)}\\
&\qquad\qquad\quad\quad\quad+\norm{u_{\eps,\,\beta,\,\delta,\,\gamma,0}}^4_{L^4(\R)}+(\beta+\eps^2)\norm{\px u_{\eps,\,\beta,\,\delta,\,\gamma,0}}^2_{L^2(\R)}\\
&\qquad\qquad\quad\quad\quad+\beta^2\norm{\pxx u_{\eps,\,\beta,\,\delta,\,\gamma,0}}^2_{L^2(\R)}\le C_0,\quad \eps,\beta,\delta,\gamma >0,\\
&\beta\int_{\R}u_{\eps,\,\beta,\,\delta,\,\gamma,0}(\px u_{\eps,\,\beta,\,\delta,\,\gamma,0})^2 \le C_0, \quad \eps,\,\beta,\,\delta,\,\gamma >0,\\
&\int_{\R}u_{\eps,\,\beta,\,\delta,\,\gamma,0}(x) dx =0,\quad \int_{\R}P_{\eps,\,\beta,\,\delta,\,\gamma,0}(x) dx =0, \quad \eps,\,\beta,\,\delta,\,\gamma>0,
\end{split}
\end{equation}
and $C_0$ is a constant independent on $\eps$,$\beta$, $\delta$ and $\gamma$.

The main result of this section is the following theorem.
\begin{theorem}
\label{th:main-3}
Assume that \eqref{eq:assinit1}, \eqref{eq:def-di-P01}, \eqref{eq:L-2P01},  and \eqref{eq:u0epsbeta} hold.
If
\begin{equation}
\label{eq:beta-eps}
\beta=\mathbf{\mathcal{O}}(\eps^2), \quad \gamma=\mathbf{\mathcal{O}}(\eps\delta)
\end{equation}
then, there exist four sequences $\{\eps_{k}\}_{k\in\N}$, $\{\beta_{k}\}_{k\in\N}$, $\{\delta_{k}\}_{k\in\N}$, $\{\gamma_{k}\}_{k\in\N}$ with $\eps_k, \beta_k, \delta_k, \gamma_k \to 0$, and a limit function $u\in L^{\infty}(0,T; L^4(\R)\cap L^2(\R)),\ T>0$, such that
\begin{itemize}
\item[$i)$] $u_{\eps_k, \beta_k, \delta_k, \gamma_k}\to u$ strongly in $L^{p}_{loc}((0,T)\times\R)$, for each $1\le p <4$, $T>0$,
\end{itemize}
and $u$ is a distributional solution of \eqref{eq:Bu1}. Moreover, if
\begin{equation}
\label{eq:beta-eps-1}
\beta=o(\eps^2),\quad \gamma=\mathbf{\mathcal{O}}(\eps\delta)
\end{equation}
then,
\begin{itemize}
\item[$ii)$] $u$ is  the unique entropy solution of \eqref{eq:Bu1}.
\end{itemize}
In particular, we have \eqref{eq:umedianulla}.
\end{theorem}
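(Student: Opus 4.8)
The plan is to follow the blueprint of Theorem~\ref{th:main-1}, but now carrying the extra dispersive term $-\beta\pxxx\uebdg$ and the extra $L^4$ control. First I would establish the analogues of Lemmas~\ref{lm:cns-1}--\ref{lm:u-l-infty}: the conservation of the zero-mean property for $\uebdg$ and $\Pebdg$ (the computation in Lemma~\ref{lm:u-media-nulla} is unaffected by the $\beta\pxxx$ term, since it integrates to zero over $\R$); then the basic energy estimate obtained by multiplying the first equation of \eqref{eq:OHepswb} by $\uebdg$ and the second by $\gamma\Pebdg$, giving
\begin{equation*}
\norm{\uebdg(t,\cdot)}^2_{L^2(\R)}+\delta\gamma\norm{\Pebdg(t,\cdot)}^2_{L^2(\R)}+2\eps\int_0^t\norm{\px\uebdg}^2_{L^2(\R)}ds\le C_0,
\end{equation*}
because $\int_\R \uebdg\pxxx\uebdg\,dx=0$. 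From here the estimates on $\norm{\px\Pebdg}_{L^2}$, $\norm{\Pebdg}_{L^2((0,T)\times\R)}$ and $\norm{\Pebdg}_{L^\infty}$ carry over verbatim from Lemmas~\ref{lm:10}.

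The substantial new ingredient is the $L^4$ (hence, via the conservation law structure, $L^\infty$-type) bound on $\uebdg$ that replaces Lemma~\ref{lm:u-l-infty}; this is forced by the presence of $\beta$, exactly as in the difference between \eqref{eq:OH} and \eqref{eq:SHP} discussed in the introduction. I would multiply the first equation of \eqref{eq:OHepswb} by $\uebdg^3$ and integrate; the dispersive term contributes $-\beta\int_\R \uebdg^3\pxxx\uebdg\,dx$, which after integration by parts produces a term controlled by $\beta\int_\R \uebdg(\px\uebdg)^2\,dx$, and this in turn is controlled using a further energy identity for $\px\uebdg$ (multiply the differentiated equation by $\px\uebdg$, respectively by $\uebdg\px\uebdg$) together with the initial bounds $\beta^2\norm{\pxx u_{\eps,\beta,\delta,\gamma,0}}^2_{L^2}\le C_0$ and $\beta\int_\R u_{0,\ast}(\px u_{0,\ast})^2\le C_0$ built into \eqref{eq:u0epsbeta}. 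The scaling hypothesis $\beta=\mathcal{O}(\eps^2)$ is precisely what makes the $\beta$-dependent error terms absorbable against the $\eps\int_0^t\norm{\px\uebdg}^2$ dissipation; the condition $\gamma=\mathcal{O}(\eps\delta)$ plays the role that $\gamma=\mathcal{O}(\eps^{1/3}\delta)$ played before, ensuring $\gamma\norm{\Pebdg}_{L^\infty}\to 0$ at the right rate. I expect this chain of higher-order energy estimates — keeping every constant independent of the four parameters — to be the main obstacle.

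With these a priori bounds in hand, the compensated compactness argument runs as in the proof of Theorem~\ref{th:main-1}. Writing the entropy production for a convex $\eta\in C^2$,
\begin{equation*}
\pt\eta(\uebdg)+\px q(\uebdg)=\eps\pxx\eta(\uebdg)-\eps\eta''(\uebdg)(\px\uebdg)^2+\gamma\eta'(\uebdg)\Pebdg-\beta\eta'(\uebdg)\pxxx\uebdg,
\end{equation*}
one checks term by term: the first is $\to 0$ in $H^{-1}_{\loc}$, the second is bounded in $L^1_{\loc}$, the third is bounded in $L^1_{\loc}$ (and $\to 0$ after multiplying by $\gamma$) by the $\Pebdg$ estimates and \eqref{eq:beta-eps}. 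The genuinely new term is the dispersive one; I would split $-\beta\eta'(\uebdg)\pxxx\uebdg=-\beta\pxx(\eta'(\uebdg)\px\uebdg)+\beta\px(\eta''(\uebdg)(\px\uebdg)^2)$ wait — more cleanly, write it as $\px\bigl(-\beta\eta'(\uebdg)\pxx\uebdg+\tfrac{\beta}{2}\eta''(\uebdg)(\px\uebdg)^2\bigr)-\tfrac{\beta}{2}\eta'''(\uebdg)(\px\uebdg)^3$, so that under $\beta=\mathcal{O}(\eps^2)$ the first piece tends to $0$ in $H^{-1}_{\loc}$ (using $\beta\norm{\pxx\uebdg}_{L^2}$-type control and $\sqrt{\eps}\px\uebdg$ bounded in $L^2$) and the second is bounded in $L^1_{\loc}$, so Lemma~\ref{lm:1} again yields that $\pt\eta(\uebdg)+\px q(\uebdg)$ lies in a compact subset of $H^{-1}_{\loc}$. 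Tartar's theorem then produces the subsequence $\{\uebdgk\}$ converging a.e.\ and in $L^p_{\loc}$, $1\le p<4$, to a limit $u\in L^\infty(0,T;L^2\cap L^4)$. Passing to the limit in the weak formulation of \eqref{eq:OHepswb} gives that $u$ solves \eqref{eq:Bu1} distributionally, proving $i)$; passing to the limit in the entropy inequality (the $\beta$- and $\gamma$-terms vanishing in the limit, the $\eps$-term being $\le 0$) gives the Kru\v zkov entropy inequalities, hence $u$ is \emph{the} entropy solution, proving $ii)$, where the stronger hypothesis $\beta=o(\eps^2)$ is needed to kill the $L^1_{\loc}$-bounded $\beta$-remainder in the entropy production rather than merely keep it bounded. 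Finally \eqref{u-media-nulla-1}, which transfers to $\uebdg$ by the same argument as in Lemma~\ref{lm:u-media-nulla}, together with the a.e.\ convergence gives \eqref{eq:umedianulla}.
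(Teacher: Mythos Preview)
Your overall architecture is right and matches the paper: carry over Lemmas~\ref{lm:cns-1}--\ref{lm:10} unchanged (they become Lemma~\ref{lm:we1}), then obtain higher a~priori bounds to run compensated compactness, then pass to the limit first distributionally, then in the entropy inequality. But the heart of the argument --- the uniform $L^4$ bound --- is not obtained the way you sketch. Multiplying by $\uebdg^3$ alone, and then separately deriving energy identities for $\px\uebdg$, does not close: the dispersive term produces $3\beta\int\uebdg^2\px\uebdg\pxx\uebdg\,dx$ (not $\beta\int\uebdg(\px\uebdg)^2\,dx$), and the cross-terms in the $\px\uebdg$-identity have no sign. The paper instead exploits the KdV conserved-quantity structure: in Lemma~\ref{lm:12} one multiplies by $-2\beta\pxx\uebdg+\uebdg^2$ to get $\norm{\uebdg}_{L^\infty}\le C(T)\beta^{-1/3}$, and in Lemma~\ref{lm:13} by the next invariant $\uebdg^3-3\beta(\px\uebdg)^2-6\beta\uebdg\pxx\uebdg+\tfrac{18}{5}\beta^2\pxxxx\uebdg$ to get the uniform $L^4$ bound together with $\beta\pxx\uebdg\in L^2$. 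A further multiplication by $-\eps^2\pxx\uebdg$ (Lemma~\ref{lm:15}) then gives $\beta\px\uebdg\pxx\uebdg\in L^1$. These specific multipliers are what make the $\beta$-terms combine into total derivatives, and this is the ingredient missing from your sketch.

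Two smaller points. For the entropy production, the paper uses the simpler two-term split $\beta\eta'(\uebdg)\pxxx\uebdg=\px\bigl(\beta\eta'(\uebdg)\pxx\uebdg\bigr)-\beta\eta''(\uebdg)\px\uebdg\pxx\uebdg$, which needs exactly the bounds from Lemmas~\ref{lm:13} and~\ref{lm:15}; your three-term split would require $\beta(\px\uebdg)^2\in L^2$ and $\beta(\px\uebdg)^3\in L^1$, which are not among the available estimates. Also, since the approximants are only bounded in $L^4$, not $L^\infty$, one must work with \emph{compactly supported} entropy pairs and invoke Schonbek's $L^p$ compensated compactness \cite{SC} rather than Tartar's $L^\infty$ version; this is how the paper proceeds in Lemma~\ref{lm:dist-solution}.
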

Let us prove some a priori estimates on $\uebdg$ and $\Pebdg$, denoting with $C_0$ the constants which depend on the initial datum, and $C(T)$ the constants which depend also on $T$.

Arguing as Section \ref{sec:OH}, we obtain the following results
\begin{lemma}\label{lm:we1}
For each $t>0$,
\begin{align}
\label{eq:P-pxP-intfy-2}
\Pebdg(t,\infty)=\px\Pebdg(t,-\infty)&=\px\Pebdg(t,\infty)=0,\\
\label{eq:up-2}
\int_{\R}\uebdg(t,x)dx &= -\delta\frac{d}{dt}\int_{\R}\Pebdg(t,x)dx,\\
\label{eq:Pmedianilla-1}
\int_{\R}\Pebdg(t,x)dx&=0,\\
\label{u-media-nulla}
\int_{\R}\uebdg(t,x)dx&=0.
\end{align}
In particular, we have that
\begin{equation}
\label{eq:stima-l-2-1}
\norm{\uebdg(t,\cdot)}^2_{L^2(\R)} + \delta\gamma\norm{\Pebdg(t,\cdot)}^2_{L^2(\R)} +2\eps\int_{0}^{t}\norm{\px\uebdg(s,\cdot)}^2_{L^2(\R)}dx\le C_{0}.
\end{equation}
Moreover, fixed $T>0$, there exists  $C(T)>0$, independent on $\eps$, $\beta$, $\delta$ and $\gamma$, such that,
\begin{align}
\label{eq:P-l-2-2}
\norm{\Pebdg}_{L^{2}((0,T)\times\R)}&\le \frac{C(T)}{\delta^{\frac{1}{2}}\gamma^{\frac{1}{2}}},\\
\label{eq:Px-in-l21}
\norm{\px\Pebdg(t,\cdot)}_{L^2(\R)}&\le \frac{C(T)}{\delta\sqrt{\eps}},\\
\label{eq:p-l-infty-1}
\norm{\Pebdg}_{L^{\infty}((0,T)\times\R)}&\le \frac{C(T)}{\delta^{\frac{3}{4}}\gamma^{\frac{1}{4}}\eps^{\frac{1}{4}}},
\end{align}
for every $0\le t\le T.$
\end{lemma}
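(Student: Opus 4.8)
The plan is to repeat, almost verbatim, the arguments of Lemmas \ref{lm:cns-1}--\ref{lm:10}, exploiting the fact that the only new term in \eqref{eq:OHepswb} compared with \eqref{eq:OHepsw}, namely the dispersive term $-\beta\pxxx\uebdg$, is skew-symmetric: it integrates to zero on $\R$ and, when tested against $\uebdg$, gives $-\beta\int_{\R}\uebdg\pxxx\uebdg\dx=\tfrac{\beta}{2}\int_{\R}\px\big((\px\uebdg)^2\big)\dx=0$. Hence it disappears from every integrated identity and every $L^2$-type energy estimate, while the equation for $\Pebdg$ in \eqref{eq:OHepswb} coincides with the one in \eqref{eq:OHepsw}.

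First I would prove \eqref{eq:P-pxP-intfy-2}: since the $\Pebdg$-equation is unchanged, the decay of $\Pebdg$ and $\px\Pebdg$ at $\pm\infty$ follows exactly as in \cite{Cd} and Lemma \ref{lm:cns-1}. Integrating that equation on $(-\infty,x)$ gives \eqref{eq:up-2} as in Lemma \ref{lm:cns-1}. Integrating the first equation of \eqref{eq:OHepswb} on $\R$ and using \eqref{eq:P-pxP-intfy-2} together with the vanishing at $\pm\infty$ of $\uebdg,\px\uebdg,\pxx\uebdg$, the flux term $\tfrac12\px\uebdg^2$, the diffusive term $\eps\pxx\uebdg$ and the dispersive term $\beta\pxxx\uebdg$ all integrate to zero, so that $\frac{d}{dt}\int_{\R}\uebdg\dx=\gamma\int_{\R}\Pebdg\dx$; combined with \eqref{eq:up-2} this gives the harmonic-oscillator ODE $\frac{d^2}{dt^2}\int_{\R}\Pebdg\dx+\frac{\gamma}{\delta}\int_{\R}\Pebdg\dx=0$, whose initial data vanish by \eqref{eq:u0epsbeta} and \eqref{eq:up-2}, forcing \eqref{eq:Pmedianilla-1} and then \eqref{u-media-nulla}, exactly as in Lemma \ref{lm:u-media-nulla}.

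For \eqref{eq:stima-l-2-1} I would multiply the first equation of \eqref{eq:OHepswb} by $\uebdg$ and integrate on $\R$, using $\int_{\R}\uebdg\px\uebdg^2\dx=\tfrac13\int_{\R}\px(\uebdg^3)\dx=0$, the skew-symmetry of the $\beta$-term noted above, and $\eps\int_{\R}\uebdg\pxx\uebdg\dx=-\eps\norm{\px\uebdg(t,\cdot)}^2_{L^2(\R)}$, to get
\[
\frac{d}{dt}\norm{\uebdg(t,\cdot)}^2_{L^2(\R)}+2\eps\norm{\px\uebdg(t,\cdot)}^2_{L^2(\R)}=2\gamma\int_{\R}\uebdg\Pebdg\dx;
\]
multiplying the $\Pebdg$-equation by $\Pebdg$, integrating and using \eqref{eq:P-pxP-intfy-2} gives $-\frac{d}{dt}\big(\tfrac{\delta}{2}\norm{\Pebdg(t,\cdot)}^2_{L^2(\R)}\big)=\int_{\R}\uebdg\Pebdg\dx$. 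Adding the two, the $\int_{\R}\uebdg\Pebdg$ terms cancel, and integrating on $(0,t)$ with \eqref{eq:u0epsbeta} yields \eqref{eq:stima-l-2-1}, exactly as in Lemma \ref{lm:stima-l-2}; then \eqref{eq:P-l-2-2} follows by integrating $\delta\gamma\norm{\Pebdg(t,\cdot)}^2_{L^2(\R)}\le C_0$ on $(0,T)$.

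Finally, \eqref{eq:Px-in-l21} and \eqref{eq:p-l-infty-1} are obtained word for word as in Lemma \ref{lm:10}, since they involve only the $\Pebdg$-equation: differentiating it in $x$, testing against $\px\Pebdg$, and applying Young's inequality and Gronwall's lemma together with \eqref{eq:stima-l-2-1} gives $\norm{\px\Pebdg(t,\cdot)}^2_{L^2(\R)}\le C(T)/(\delta^2\eps)$, i.e.\ \eqref{eq:Px-in-l21}; and $\Pebdg^2(t,x)=2\int_{-\infty}^{x}\Pebdg\px\Pebdg\dy\le\norm{\Pebdg(t,\cdot)}_{L^2(\R)}\norm{\px\Pebdg(t,\cdot)}_{L^2(\R)}$ combined with \eqref{eq:stima-l-2-1} and \eqref{eq:Px-in-l21} gives \eqref{eq:p-l-infty-1}. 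There is no genuine obstacle here: the only point needing care is the justification that the boundary contributions at $x=\pm\infty$ vanish in all these integrations by parts, which is guaranteed because $\uebdg$ is a classical solution inheriting the decay of the smooth, rapidly decaying data $u_{\eps,\beta,\delta,\gamma,0}$ and the bounds in \eqref{eq:u0epsbeta}, so the proof is essentially bookkeeping on top of Section \ref{sec:OH}.
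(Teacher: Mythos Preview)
Your proposal is correct and matches the paper's own approach: the paper gives no separate proof for this lemma, stating only ``Arguing as Section \ref{sec:OH}, we obtain the following results'', and your write-up spells out precisely that reduction, including the key observation that the dispersive term $-\beta\pxxx\uebdg$ vanishes upon integration over $\R$ and when tested against $\uebdg$, so that Lemmas \ref{lm:cns-1}--\ref{lm:10} carry over verbatim.
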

\begin{lemma}\label{lm:12}
Fixed $T>0$. Then,
\begin{equation}
\label{eq:u-infty-2}
\norm{\uebdg}_{L^{\infty}((0,T)\times\R)}\le C(T)\beta^{-\frac{1}{3}}.
\end{equation}
Moreover, for every $0\le t\le T$,
\begin{equation}
\label{eq;px-u-l2-1}
\beta\norm{\uebdg(t,\cdot)}^2_{L^2(\R)}+ \beta\eps \int_{0}^{t}\norm{\pxx\uebdg(s,\cdot)}^2_{L^2(\R)}ds \le C(T)\beta^{-\frac{1}{3}}.
\end{equation}
\end{lemma}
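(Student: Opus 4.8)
We first reduce the $L^\infty$ estimate \eqref{eq:u-infty-2} to \eqref{eq;px-u-l2-1}. For $0\le t\le T$, since $\uebdg(t,\cdot)\in H^1(\R)$, the same pointwise argument used for \eqref{eq:holder} gives $\norm{\uebdg(t,\cdot)}_{L^\infty(\R)}^2\le\norm{\uebdg(t,\cdot)}_{L^2(\R)}\norm{\px\uebdg(t,\cdot)}_{L^2(\R)}$, and by \eqref{eq:stima-l-2-1} the first factor is at most $C_0^{1/2}$. Hence, once \eqref{eq;px-u-l2-1} is known — so that $\norm{\px\uebdg(t,\cdot)}_{L^2(\R)}^2\le C(T)\beta^{-4/3}$ — we obtain $\norm{\uebdg(t,\cdot)}_{L^\infty(\R)}\le C(T)\beta^{-1/3}$, i.e. \eqref{eq:u-infty-2}. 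So it suffices to prove \eqref{eq;px-u-l2-1}.

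To that end I would differentiate the first equation of \eqref{eq:OHepswb} in $x$, multiply by $\beta\px\uebdg$ and integrate over $\R$. By \eqref{eq:P-pxP-intfy-2} the dispersive term $\beta^2\int_\R\pxxxx\uebdg\,\px\uebdg\,dx$ vanishes; using the second equation of \eqref{eq:OHepswb} in the form $\px\Pebdg=\uebdg+\delta\pt\Pebdg$ and $\int_\R\uebdg\px\uebdg\,dx=0$, the source term reduces to the lower-order quantity $\gamma\beta\delta\int_\R\pt\Pebdg\,\px\uebdg\,dx$; and the viscosity produces the dissipation $-\eps\beta\norm{\pxx\uebdg(t,\cdot)}_{L^2(\R)}^2$. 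One is led to the identity
\begin{equation*}
\frac{\beta}{2}\frac{d}{dt}\norm{\px\uebdg(t,\cdot)}_{L^2(\R)}^2+\eps\beta\norm{\pxx\uebdg(t,\cdot)}_{L^2(\R)}^2=-\frac{\beta}{2}\int_\R(\px\uebdg)^3\,dx+\gamma\beta\delta\int_\R\pt\Pebdg\,\px\uebdg\,dx .
\end{equation*}
Writing $\pt\Pebdg=\delta^{-1}(\px\Pebdg-\uebdg)$ and invoking \eqref{eq:stima-l-2-1}, \eqref{eq:P-l-2-2}, \eqref{eq:Px-in-l21}, the relation $\gamma=\mathcal{O}(\eps\delta)$ and Young's inequality, the last term is controlled by $C(T)$ up to a fraction of $\eps\beta\norm{\pxx\uebdg}_{L^2(\R)}^2$, hence it is harmless.

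The heart of the matter is the cubic term $-\tfrac{\beta}{2}\int_\R(\px\uebdg)^3\,dx$: it is sign-indefinite and critical for the available $\eps\beta\norm{\pxx\uebdg}_{L^2(\R)}^2$, so a direct interpolation/Young estimate produces only a Bernoulli-type differential inequality for $\norm{\px\uebdg(t,\cdot)}_{L^2(\R)}^2$ that need not stay finite on $[0,T]$, and one must exploit the structure. My plan is to run the identity above jointly with: the $L^4$ identity obtained by multiplying the first equation of \eqref{eq:OHepswb} by $\uebdg^3$, which supplies the extra dissipation $\eps\int_\R\uebdg^2(\px\uebdg)^2\,dx$ and the cross term $\beta\int_\R\uebdg(\px\uebdg)^3\,dx$; and the evolution identity for $\beta\int_\R\uebdg(\px\uebdg)^2\,dx$, whose value at $t=0$ is controlled by \eqref{eq:u0epsbeta} and which, after an integration by parts, reproduces $\beta\int_\R\uebdg(\px\uebdg)^3\,dx$ with the opposite sign — so that a suitable linear combination of the three identities cancels the dangerous cubic contributions. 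At that stage $\beta=\mathcal{O}(\eps^2)$ and $\gamma=\mathcal{O}(\eps\delta)$ (together with \eqref{eq:p-l-infty-1}) are precisely what is needed to absorb the surviving cross terms — essentially multiples of $\beta\eps^{-1}\int_\R\uebdg^2(\px\uebdg)^2\,dx$ and of $P$-terms — into the dissipative terms $\eps\int_\R\uebdg^2(\px\uebdg)^2\,dx$ and $\eps\beta\norm{\pxx\uebdg}_{L^2(\R)}^2$. What remains is a Gronwall-type inequality for the combined energy $\beta\norm{\px\uebdg(t,\cdot)}_{L^2(\R)}^2+\norm{\uebdg(t,\cdot)}_{L^4(\R)}^4$ (plus the auxiliary term), with data bounded by $C_0$ via \eqref{eq:u0epsbeta}; the negative powers of $\beta$ entering its constants and forcing through the estimates of this section propagate to the bound $C(T)\beta^{-1/3}$, and re-inserting this into the $H^1$ identity and integrating in $t$ yields \eqref{eq;px-u-l2-1}, hence \eqref{eq:u-infty-2}. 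The laborious and genuinely obstructive step is the bookkeeping of the powers of $\beta,\eps,\gamma,\delta$ through these cancellations and verifying that every leftover term is absorbable.
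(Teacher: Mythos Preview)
Your proposed cancellation does not work. In the $H^1$ identity the dangerous term is $\beta\int_\R(\px\uebdg)^3\,dx$; but multiplying the equation by $\uebdg^3$ produces, from the dispersion, $-\beta\int_\R\uebdg^3\pxxx\uebdg\,dx=-3\beta\int_\R\uebdg(\px\uebdg)^3\,dx$, and the evolution of $\int_\R\uebdg(\px\uebdg)^2\,dx$ likewise generates only terms of the type $\int_\R\uebdg(\px\uebdg)^3\,dx$ and $\beta\int_\R\px\uebdg(\pxx\uebdg)^2\,dx$. These carry one more power of $\uebdg$ (or one more derivative) than $\int_\R(\px\uebdg)^3\,dx$, so no linear combination of your three identities can make that cubic disappear. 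The mismatch is structural, not a bookkeeping issue.

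The paper exploits the second KdV invariant instead: it multiplies \eqref{eq:OHepswb} by $-2\beta\pxx\uebdg+\uebdg^2$ (equivalently, your $H^1$ multiplier combined with $\uebdg^2$, \emph{not} $\uebdg^3$). Then the Burgers nonlinearity against $-2\beta\pxx\uebdg$ gives $+\beta\int_\R(\px\uebdg)^3\,dx$, while the dispersion against $\uebdg^2$ gives $-\beta\int_\R\uebdg^2\pxxx\uebdg\,dx=-\beta\int_\R(\px\uebdg)^3\,dx$, and these cancel exactly. One is left with
\[
\frac{d}{dt}\Big(\beta\norm{\px\uebdg}^2_{L^2}+\tfrac{1}{3}\!\int_\R\uebdg^3\,dx\Big)+2\eps\!\int_\R\uebdg(\px\uebdg)^2\,dx+2\beta\eps\norm{\pxx\uebdg}^2_{L^2}=\text{source terms},
\]
and after handling the sources with \eqref{eq:stima-l-2-1}, \eqref{eq:p-l-infty-1} and $\gamma=\mathcal{O}(\eps\delta)$ one integrates in time and bounds $\frac{1}{3}\int\vert\uebdg\vert^3$ and $2\eps\int\vert\uebdg\vert(\px\uebdg)^2$ by $C\norm{\uebdg}_{L^\infty}$ times \eqref{eq:stima-l-2-1}, obtaining
\[
\beta\norm{\px\uebdg(t,\cdot)}^2_{L^2}+\beta\eps\!\int_0^t\!\norm{\pxx\uebdg}^2_{L^2}\,ds\le C(T)\big(1+\norm{\uebdg}_{L^\infty((0,T)\times\R)}\big).
\]
Finally, the exponent $-\tfrac{1}{3}$ does not come from ``propagating constants'': feeding the last bound into your interpolation $\norm{\uebdg}_{L^\infty}^2\le C_0^{1/2}\norm{\px\uebdg}_{L^2}$ gives the algebraic inequality $\norm{\uebdg}_{L^\infty}^4\le C(T)\beta^{-1}\big(1+\norm{\uebdg}_{L^\infty}\big)$, whose solution is $\norm{\uebdg}_{L^\infty}\le C(T)\beta^{-1/3}$; \eqref{eq;px-u-l2-1} then follows by substituting back.
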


\begin{proof}
Let $0\le t\le T$. Multiplying  \eqref{eq:OHepswb} by $-2\beta\pxx\uebdg + \uebdg^2$, and arguing as \cite[Lemma $2.5$]{Cd2}, we obtain that
\begin{equation}
\label{eq:1002}
\begin{split}
&\frac{d}{dt}\left(\beta\norm{\px\uebdg(t,\cdot)}^2_{L^2(\R)}+\frac{1}{3}\int_{\R}\uebdg^3dx\right) \\
& \qquad+ 2\eps\int_{\R} \uebdg(\px\uebdg)^2dx+ 2\beta\eps\norm{\pxx\uebdg(t,\cdot)}^2_{L^2(\R)}\\
&\quad = -2\gamma\beta\int_{\R}\pxx\uebdg\Pebdg dx  + \gamma\int_{\R}\uebdg^2\Pebdg dx.
\end{split}
\end{equation}
Since $0 <\beta,\eps < 1$, it follows from \eqref{eq:beta-eps}, \eqref{eq:stima-l-2-1} and the Young inequality that
\begin{equation}
\label{eq:1011}
\begin{split}
&2\gamma\beta\int_{\R}\pxx\uebdg\Pebdg dx\le 2\int_{\R}\left\vert\beta\sqrt{\eps}\pxx\uebdg\right\vert\left\vert\frac{\gamma}{\sqrt{\eps}}\Pebdg\right\vert dx\\
&\quad \le \beta^2\eps\norm{\pxx\uebdg(t,\cdot)}^2_{L^2(\R)}+ \frac{\gamma^2}{\eps} \norm{\Pebdg(t,\cdot)}^2_{L^2(\R)}\\
&\quad \le \beta\eps \norm{\pxx\uebdg(t,\cdot)}^2_{L^2(\R)}+ \frac{\gamma^2}{\delta\gamma}C(T)\\
&\quad \le \beta\eps \norm{\pxx\uebdg(t,\cdot)}^2_{L^2(\R)}+C(T)\eps\\
&\quad \le \beta\eps \norm{\pxx\uebdg(t,\cdot)}^2_{L^2(\R)}+C(T).
\end{split}
\end{equation}
Since $0< \delta,\eps <1$, due to \eqref{eq:beta-eps}, \eqref{eq:stima-l-2-1} and the H\"older inequality,
\begin{equation}
\label{eq:1012}
\begin{split}
&\gamma\int_{\R}\uebdg^2\Peb dx\le \gamma\int_{\R} \uebdg^2 \vert \Pebdg\vert dx\\
&\quad \le \gamma\norm{\uebdg}_{L^{\infty}((0,T)\times\R)}\int_{\R} \vert \uebdg\vert\vert \Pebdg\vert dx\\
&\quad \le \frac{\gamma}{\sqrt{\delta}\sqrt{\gamma}}C_{0} \norm{\uebdg}_{L^{\infty}((0,T)\times\R)}\le\eps^{\frac{1}{2}}C(T)\norm{\uebdg}_{L^{\infty}((0,T)\times\R)}\\
&\quad  \le C(T)\norm{\uebdg}_{L^{\infty}((0,T)\times\R)}.
\end{split}
\end{equation}
Therefore, \eqref{eq:stima-l-2-1}, \eqref{eq:1002}, \eqref{eq:1011} and \eqref{eq:1012} give
\begin{align*}
&\frac{d}{dt}\left(\beta\norm{\px\uebdg(t,\cdot)}^2_{L^2(\R)}+\frac{1}{3}\int_{\R}\uebdg^3dx\right)+ \beta\eps\norm{\pxx\uebdg(t,\cdot)}^2_{L^2(\R)}\\
&\quad \le C(T)\norm{\uebdg}_{L^{\infty}((0,T)\times\R)}- 2\eps\int_{\R} \uebdg(\px\uebdg)^2dx +C(T)\\
&\quad \le C(T)\norm{\uebdg}_{L^{\infty}((0,T)\times\R)}+2\eps \int_{\R} \vert\uebdg\vert(\px\uebdg)^2dx +C(T)\\
&\quad \le C(T)\norm{\uebdg}_{L^{\infty}((0,T)\times\R)}+2\eps \norm{\uebdg}_{L^{\infty}((0,T)\times\R)}\int_{\R}(\px\uebdg)^2dx+C(T).
\end{align*}
It follows from \eqref{eq:u0epsbeta}, \eqref{eq:stima-l-2-1} and an integration on $(0,t)$ that
\begin{align*}
&\beta\norm{\px\uebdg(t,\cdot)}^2_{L^2(\R)} + \beta\eps \int_{0}^{t}\norm{\pxx\uebdg(s,\cdot)}^2_{L^2(\R)} ds\\
&\quad \le C_{0}+ C(T)\norm{\uebdg}_{L^{\infty}((0,T)\times\R)}\int_{0}^t ds+C(T) \int_{0}^{t}ds \\
&\qquad+2\eps \norm{\uebdg}_{L^{\infty}((0,T)\times\R)}\int_{0}^{t}\norm{\px\uebdg(s,\cdot)}^2_{L^2(\R)} ds +\frac{1}{3}\int_{\R}\vert\uebdg\vert^3 dx\\
&\quad \le C(T) + C(T)\norm{\uebdg}_{L^{\infty}((0,T)\times\R)} + C_{0}C(T)\norm{\uebdg}_{L^{\infty}((0,T)\times\R)}\\
&\qquad +\frac{1}{3}\norm{\uebdg}_{L^{\infty}((0,T)\times\R)}\norm{\uebdg(t,\cdot)}^2_{L^2(\R)}\\
&\quad \le C(T) + C(T)\norm{\uebdg}_{L^{\infty}((0,T)\times\R)}+\frac{1}{3}C_0 \norm{\uebdg}_{L^{\infty}((0,T)\times\R)}.
\end{align*}
Therefore,
\begin{equation}
\label{eq:stima-100}
\begin{split}
\beta\norm{\px\uebdg(t,\cdot)}^2_{L^2(\R)}&+ \beta\eps \int_{0}^{t}\norm{\pxx\uebdg(s,\cdot)}^2_{L^2(\R)}ds\\
\le & C(T)\left(1+\norm{\uebdg}_{L^{\infty}((0,T)\times\R)}\right).
\end{split}
\end{equation}
Due to \eqref{eq:stima-l-2-1}, \eqref{eq:stima-100} and the H\"older inequality,
\begin{align*}
\uebdg^2(t,x)&=2\int_{-\infty}^{x}\uebdg\px\uebdg dy\le 2\int_{\R}\vert\uebdg\px\uebdg\vert dx\\
&\le\frac{2}{\sqrt{\beta}}\norm{\uebdg}_{L^2(\R)}\sqrt{\beta}\norm{\px\uebdg(t,\cdot)}_{L^2(\R)}\\
&\le\frac{2}{\sqrt{\beta}}C_0\sqrt{ C(T)\left(1+\norm{\uebdg}_{L^{\infty}((0,T)\times\R)}\right)},
\end{align*}
that is
\begin{equation}
\label{eq:quarto-grado}
\norm{\uebdg}^4_{L^{\infty}((0,T)\times\R)} \le \frac{C(T)}{\beta}\left( 1+\norm{\uebdg}_{L^{\infty}((0,T)\times\R)}\right).
\end{equation}
Arguing as \cite[Lemma $2.7$]{Cd2}, we have \eqref{eq:u-infty-2}.

Finally, \eqref{eq;px-u-l2-1} follows from \eqref{eq:u-infty-2} and \eqref{eq:stima-100}.
\end{proof}

\begin{lemma}\label{lm:13}
Let  $T>0$. Assume \eqref{eq:beta-eps} holds true. Then:
\begin{itemize}
\item[$i$)] the family $\{\uebdg\}_{\eps,\beta,\delta,\gamma}$ is bounded in $L^{4}((0,T)\times\R)$;
\item[$ii$)] the following families $\{\beta\pxx\uebdg\}_{\eps,\beta,\delta,\gamma},\,\{\sqrt{\eps}\uebdg\px\uebdg\}_{\eps,\beta,\delta,\beta},$\\ $\{\beta\sqrt{\eps}\pxxx\uebdg \}_{\eps,\beta,\delta,\gamma} $ are bounded in $L^2((0,T)\times\R)$.
 \end{itemize}
\end{lemma}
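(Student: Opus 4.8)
The estimate for $\{\beta\pxx\uebdg\}$ is immediate from Lemma~\ref{lm:12}. Indeed, \eqref{eq;px-u-l2-1} gives $\beta\eps\int_0^T\norm{\pxx\uebdg(s,\cdot)}_{L^2(\R)}^2\,ds\le C(T)\beta^{-1/3}$, and since $\beta=\mathcal{O}(\eps^2)$ forces $\beta/\eps\le C\beta^{1/2}$, we obtain
\[
\beta^2\int_0^T\norm{\pxx\uebdg(s,\cdot)}_{L^2(\R)}^2\,ds=\frac{\beta}{\eps}\cdot\beta\eps\int_0^T\norm{\pxx\uebdg(s,\cdot)}_{L^2(\R)}^2\,ds\le C(T)\,\beta^{1/6}\le C(T),
\]
because $\beta<1$; hence $\{\beta\pxx\uebdg\}_{\eps,\beta,\delta,\gamma}$ is bounded in $L^2((0,T)\times\R)$ (in fact it converges to $0$ there, which will be useful when passing to the limit).

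For the remaining three families I would derive a single fourth--order energy estimate, testing the first equation of \eqref{eq:OHepswb} against the variational derivative $M[\uebdg]:=4\uebdg^3-c_1\beta\big((\px\uebdg)^2+2\uebdg\pxx\uebdg\big)+c_2\beta^2\pxxxx\uebdg$ of the fourth conserved functional $\mathcal{F}[u]=\int_\R\big(u^4+c_1\beta\,u(\px u)^2+\tfrac{c_2}{2}\beta^2(\pxx u)^2\big)\,dx$ of the Ostrovsky/Korteweg--de Vries hierarchy, the constants $c_1,c_2>0$ being chosen so that $M[u]$ annihilates the conservative part $u\px u-\beta\pxxx u$ of the equation, i.e. $\int_\R M[u]\,(u\px u-\beta\pxxx u)\,dx=0$. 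With this choice, repeated integration by parts leaves
\[
\frac{d}{dt}\mathcal{F}[\uebdg]+c_1'\,\eps\int_\R\uebdg^2(\px\uebdg)^2\,dx+c_2'\,\beta^2\eps\int_\R(\pxxx\uebdg)^2\,dx=-c_1''\,\beta\eps\int_\R\uebdg(\pxx\uebdg)^2\,dx+\gamma\int_\R M[\uebdg]\,\Pebdg\,dx,
\]
with $c_1',c_2',c_1''>0$; the two dissipative terms on the left are exactly the ones we are after.

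The core of the argument is the estimate of the right--hand side. For the dispersion--diffusion cross term one uses $\norm{\pxx\uebdg}_{L^2(\R)}^2\le\norm{\px\uebdg}_{L^2(\R)}\norm{\pxxx\uebdg}_{L^2(\R)}$ and the bound $\norm{\uebdg}_{L^\infty((0,T)\times\R)}\le C(T)\beta^{-1/3}$ of \eqref{eq:u-infty-2}, absorbing a fraction of it into the $\beta^2\eps\int(\pxxx\uebdg)^2$ dissipation by Young's inequality and controlling the remainder, after integration in $t$, by \eqref{eq;px-u-l2-1} together with $\beta=\mathcal{O}(\eps^2)$. For the source term one expands $M[\uebdg]$, transfers the derivatives off $\pxxxx\uebdg$ onto $\Pebdg$ by means of the second equation of \eqref{eq:OHepswb} (which produces helpful powers of $\delta$), and bounds each resulting integral by the H\"older inequality, using \eqref{eq:stima-l-2-1}, \eqref{eq:P-l-2-2}, \eqref{eq:p-l-infty-1} and \eqref{eq:u-infty-2}; here the hypothesis $\gamma=\mathcal{O}(\eps\delta)$ makes every contribution either a constant $C(T)$ or a term absorbable on the left. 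Integrating the ensuing differential inequality on $(0,T)$ and bounding $\mathcal{F}[\uebdg(0,\cdot)]$ via \eqref{eq:u0epsbeta} then gives $\mathcal{F}[\uebdg(t,\cdot)]\le C(T)$ on $[0,T]$ and $\eps\int_0^T\!\!\int_\R\uebdg^2(\px\uebdg)^2\,dt\,dx+\beta^2\eps\int_0^T\!\!\int_\R(\pxxx\uebdg)^2\,dt\,dx\le C(T)$, i.e. $\{\sqrt\eps\,\uebdg\px\uebdg\}$ and $\{\beta\sqrt\eps\,\pxxx\uebdg\}$ bounded in $L^2((0,T)\times\R)$. To pass from $\mathcal{F}[\uebdg(t,\cdot)]\le C(T)$ to the $L^4$ bound $(i)$, one controls the sign--indefinite term $c_1\beta\int\uebdg(\px\uebdg)^2$ of $\mathcal{F}$ by a Gagliardo--Nirenberg plus Young estimate against $\norm{\uebdg}_{L^4(\R)}^4$ and $\beta^2\norm{\pxx\uebdg}_{L^2(\R)}^2$, invoking \eqref{eq:stima-l-2-1} and the bound of the first step.

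I expect the main obstacle to be exactly the second and third steps: arranging the fourth--invariant multiplier so that the dispersive part of the equation cancels exactly (leaving only the $\eps$-- and $\gamma$--contributions), and then the bookkeeping of the powers of $\eps,\beta,\delta,\gamma$ so that the cross term $\beta\eps\int\uebdg(\pxx\uebdg)^2$ and the $\gamma$--weighted $\Pebdg$--terms remain bounded rather than merely small --- this is precisely the point at which the scaling restrictions $\beta=\mathcal{O}(\eps^2)$ and $\gamma=\mathcal{O}(\eps\delta)$ are genuinely used. Recovering $(i)$ from the bound on the non--coercive functional $\mathcal{F}$ is a further, smaller technical point.
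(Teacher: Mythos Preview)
Your overall strategy --- testing against the variational derivative of the fourth KdV invariant $\mathcal{F}[u]=\int\bigl(\tfrac14 u^4+c_1\beta\,u(\px u)^2+\tfrac{c_2}{2}\beta^2(\pxx u)^2\bigr)dx$ --- is exactly what the paper does (there $c_1=3$, $c_2=\tfrac{18}{5}$, and the multiplier is written as $\uebdg^3-3\beta(\px\uebdg)^2-6\beta\uebdg\pxx\uebdg+\tfrac{18}{5}\beta^2\pxxxx\uebdg$). Your shortcut for $\{\beta\pxx\uebdg\}$ via Lemma~\ref{lm:12} is correct and actually cleaner than extracting it from $\mathcal F$; the recovery of $(i)$ from $\mathcal F\le C(T)$ via $\int u(\px u)^2=-\tfrac12\int u^2\pxx u$ and Young is fine; and for the highest--order $\gamma$--source term, a single integration by parts $\gamma\beta^2\!\int\pxxxx\uebdg\,\Pebdg=-\gamma\beta^2\!\int\pxxx\uebdg\,\px\Pebdg$ together with \eqref{eq:Px-in-l21} already suffices --- there is no need to invoke the second equation of \eqref{eq:OHepswb} or produce extra powers of $\delta$.

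The genuine gap is your treatment of the dispersion--diffusion cross term $6\beta\eps\!\int\uebdg(\pxx\uebdg)^2\,dx$. Your proposed chain
\[
\beta\eps\!\int |u|(\pxx u)^2\le C\beta^{2/3}\eps\,\norm{\pxx u}_{L^2}^2\le C\beta^{2/3}\eps\,\norm{\px u}_{L^2}\norm{\pxxx u}_{L^2}\le\tfrac12 c_2'\beta^2\eps\norm{\pxxx u}_{L^2}^2+C\beta^{-2/3}\eps\norm{\px u}_{L^2}^2
\]
leaves a remainder $C\beta^{-2/3}\eps\norm{\px u}_{L^2}^2$ that does \emph{not} integrate to a bounded quantity: the sharpest available control is $\eps\!\int_0^T\norm{\px u}_{L^2}^2\le C_0$ from \eqref{eq:stima-l-2-1}, which yields $C\beta^{-2/3}\to\infty$; using \eqref{eq;px-u-l2-1} instead is worse, and $\beta=\mathcal O(\eps^2)$ does not help since the exponent of $\beta$ is already negative. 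The fix is one further integration by parts \emph{before} estimating:
\[
\int_\R u(\pxx u)^2\,dx=-\int_\R u\,\px u\,\pxxx u\,dx,
\]
whence, by Young's inequality,
\[
6\beta\eps\Bigl|\int_\R u(\pxx u)^2\,dx\Bigr|\le\alpha\,\beta^2\eps\!\int_\R(\pxxx u)^2\,dx+\frac{9}{\alpha}\,\eps\!\int_\R u^2(\px u)^2\,dx,
\]
and any $\alpha\in(3,\tfrac{18}{5})$ lets you absorb both pieces into the good terms $3\eps\!\int u^2(\px u)^2$ and $\tfrac{18}{5}\beta^2\eps\!\int(\pxxx u)^2$ on the left. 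This is precisely how the positive constants $D_1,D_2$ in the paper's displayed identity arise; the $L^\infty$ bound of Lemma~\ref{lm:12} is not needed at this step.
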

The proof of the previous lemma is based on the regularity of the functions $\uebdg$ and \cite[Lemma $2.5$]{Cd2}.

\begin{proof}[Proof of Lemma \ref{lm:13}]
Let $0\le t \le T$. Multiplying  \eqref{eq:OHepswb} by
\begin{equation*}
\uebdg^3 - 3\beta(\px\uebdg)^2 - 6\beta\uebdg\pxx\ueb +\frac{18}{5}\beta^2\pxxxx\uebdg,
\end{equation*}
and arguing as \cite[Lemma $2.6$]{Cd2}, we obtain that
\begin{align*}
\frac{d}{dt} G(t) &+3D_1\eps \int_{\R}\uebdg^2(\px\uebdg)^2 dx +\eps\beta^2D_2\int_{\R}(\pxxx\uebdg)^2dx\\
\le & \gamma \int_{\R} \vert\uebdg\vert^3 \vert\Pebdg\vert dx +3 \gamma \beta\int_{\R}(\px\uebdg)^2\vert \Pebdg\vert dx\\
&+6 \gamma\beta \int_{\R} \vert\uebdg\vert \vert \pxx\uebdg\vert \vert \Pebdg\vert dx +\frac{18}{5}\gamma\beta^2 \int_{\R}\pxxxx\uebdg\Pebdg dx,
\end{align*}
where
\begin{equation}
\label{eq:def-di-G}
G(t)=\frac{1}{4}\int_{\R}\uebdg^4 dx + 3\beta\int_{\R}\ueb(\px\uebdg)^2 dx + \frac{9}{5}\beta^2\int_{\R}(\pxx\uebdg)^2 dx,
\end{equation}
while $D_1$, $D_2$ are fixed positive constants.

Due to \eqref{eq:P-pxP-intfy-2},
\begin{equation}
\label{eq:1212}
\frac{18}{5}\gamma\beta^2 \int_{\R}\pxxxx\uebdg\Pebdg dx=-\frac{18}{5}\gamma\beta^2 \int_{\R}\pxxx\uebdg\px\Pebdg dx
\end{equation}
Since $0 <\beta< 1$, it follows from \eqref{eq:beta-eps}, \eqref{eq:Px-in-l21}, \eqref{eq:1212} and the Young inequality that
\begin{align*}
&-\frac{18}{5}\gamma\beta^2 \int_{\R}\pxxx\uebdg\px\Pebdg dx\le  \frac{18}{5}\gamma\beta^2\left\vert \int_{\R}\pxxx\uebdg\px\Pebdg dx\right\vert\\
&\quad \le\frac{18}{5} \int_{\R}\left\vert\beta^2\sqrt{A\eps}\pxxx\uebdg\right\vert\left\vert\frac{\gamma\px\Pebdg}{\sqrt{A\eps}}\right\vert\\
&\quad \le\frac{9 A}{5}\beta^4\eps\int_{\R}(\pxxx\uebdg)^2 dx+ \frac{9}{5 A}\frac{\gamma^2}{\eps}\int_{\R}(\px\Pebdg)^2 dx\\
&\quad \le\frac{9 A}{5}\beta^2\eps\int_{\R}(\pxxx\uebdg)^2 dx+\frac{9}{5 A}\frac{\gamma^2}{\delta^2\eps^2}C(T)\\
&\quad \le\frac{9 A}{5}\beta^2\eps\int_{\R}(\pxxx\uebdg)^2 dx+\frac{C(T)}{A},
\end{align*}
where $A$ is a positive constant that will be specified later.
Therefore,
\begin{align*}
\frac{d}{dt} G(t) &+3D_1\eps \int_{\R}\uebdg^2(\px\uebdg)^2 dx+\eps\beta^2\left (D_2- \frac{9 A}{5}\right)\int_{\R}(\pxxx\uebdg)^2dx\\
\le & \gamma \int_{\R} \vert\uebdg\vert^3 \vert\Pebdg\vert dx +3 \gamma \beta\int_{\R}(\px\uebdg)^2\vert \Pebdg\vert dx\\
&+6 \gamma\beta \int_{\R} \vert\uebdg\vert \vert \pxx\uebdg\vert \vert \Pebdg\vert dx +\frac{C(T)}{A}.
\end{align*}
Choosing $ \displaystyle A < \frac{5 D_2}{9}$, we have
\begin{align*}
\frac{d}{dt} G(t) &+3D_1\eps \int_{\R}\uebdg^2(\px\uebdg)^2 dx+\eps\beta^2D_3\int_{\R}(\pxxx\uebdg)^2dx\\
\le & \gamma \int_{\R} \vert\uebdg\vert ^3 \vert\Pebdg\vert dx +3 \gamma \beta\int_{\R}(\px\uebdg)^2\vert \Pebdg\vert dx\\
&+6 \gamma\beta \int_{\R} \vert\uebdg\vert \vert \pxx\uebdg\vert \vert \Pebdg\vert dx +C(T),
\end{align*}
where $D_3$ is a fixed positive constant.

Since $0<\eps<1$, due to \eqref{eq:beta-eps}, \eqref{eq:stima-l-2-1}, \eqref{eq:p-l-infty-1} and the Young inequality, we obtain that
\begin{align*}
&\gamma\int_{\R}\vert \uebdg^3\vert\vert\Pebdg\vert dx dx+6\gamma\beta\int_{\R}\vert\uebdg\vert\vert\pxx\uebdg\vert\vert\Pebdg\vert\\
&\quad =\int_{\R}\left\vert\frac{1}{\sqrt{2}}\uebdg^2\right\vert\left\vert \sqrt{2}\gamma\uebdg\Pebdg\right\vert dx\\
&\qquad+ \int_{\R}\left\vert\frac{3\sqrt{2}}{\sqrt{5}}\beta\pxx\uebdg\right\vert \left\vert \frac{\sqrt{5}\gamma}{\sqrt{2}}\uebdg\Pebdg\right\vert dx\\
&\quad \leq\frac{1}{4}\int_{\R}\uebdg^4 dx + \gamma^2\int_{\R}\uebdg^2\Peb^2 dx\\
&\qquad+\frac{9}{5}\beta^2\int_{\R}(\pxx\uebdg)^2 dx+ \frac{5\gamma^2}{4}\int_{\R}\uebdg^2\Pebdg^2dx\\
&\quad=  \frac{1}{4}\int_{\R}\uebdg^4 dx + \frac{9}{5}\beta^2\int_{\R}(\pxx\uebdg)^2 dx + \frac{9\gamma^2}{4}\int_{\R}\uebdg^2\Pebdg^2dx\\
&\quad \le \frac{1}{4}\int_{\R}\uebdg^4 dx + \frac{9}{5}\beta^2\int_{\R}(\pxx\uebdg)^2 dx\\
&\qquad+ \frac{9\gamma^2}{4} \norm{\Pebdg}^2_{L^{\infty}((0,T)\times\R)}\int_{\R}\uebdg^2(t,x) dx \\
&\quad \le \frac{1}{4}\int_{\R}\uebdg^4 dx + \frac{9}{5}\beta^2\int_{\R}(\pxx\uebdg)^2 dx +C(T)\frac{\gamma^2}{\delta^{\frac{3}{2}}\gamma^{\frac{1}{2}}\eps^{\frac{1}{2}}}\\
&\quad\leq  \frac{1}{4}\int_{\R}\uebdg^4 dx + \frac{9}{5}\beta^2\int_{\R}(\pxx\uebdg)^2 dx +\eps C(T)\\
&\quad \le \frac{1}{4}\int_{\R}\uebdg^4 dx + \frac{9}{5}\beta^2\int_{\R}(\pxx\uebdg)^2 dx + C(T).
\end{align*}
Again by \eqref{eq:beta-eps} and \eqref{eq:p-l-infty-1},
\begin{align*}
3 \gamma \beta\int_{\R}(\px\uebdg)^2\vert \Pebdg\vert dx\le&3 \gamma \beta \norm{\Pebdg}_{L^{\infty}((0,T)\times\R)}\int_{\R}(\px\uebdg)^2 dx\\
\le &C(T)\frac{\gamma}{{\delta^{\frac{3}{4}}\gamma^{\frac{1}{4}}}\eps^{\frac{1}{4}}}\beta\int_{\R}(\px\uebdg)^2 dx\\
\le& C(T)\sqrt{\eps}\beta\int_{\R}(\px\uebdg)^2 dx\le C(T)\beta\int_{\R}(\px\uebdg)^2 dx.
\end{align*}
Hence,
\begin{align*}
\frac{d}{dt}G(t)&+ 3 \eps D_{1}\int_{\R}\ueb^2(\px\uebdg)^2 dx + \eps\beta^2 D_{3}\int_{\R}(\pxxx\uebdg)^2 dx\\
\le & \frac{1}{4}\int_{\R}\uebdg^4 dx + \frac{9}{5}\beta^2\int_{\R}(\pxx\uebdg)^2 dx + C(T)\beta\int_{\R} (\px\uebdg)^2 dx + C(T).
\end{align*}
Arguing as \cite[Lemma $2.6$]{Cd2}, thanks to \cite[Lemma $2.7$]{Cd2}, we have
\begin{align*}
\norm{\uebdg}_{L^4((0,T)\times\R)}\le& C(T),\\
\beta\norm{\pxx\uebdg}_{L^2((0,T)\times\R)}\le& C(T),\\
\sqrt{\eps}\norm{\uebdg\px\uebdg}_{L^2((0,T)\times\R)}\le& C(T),\\
\sqrt{\eps}\beta\norm{\pxxx\uebdg}_{L^2((0,T)\times\R)}\le& C(T).
\end{align*}
The proof is done.
\end{proof}

\begin{lemma}\label{lm:15}
Let  $T>0$. Assume that \eqref{eq:beta-eps} holds true. Then:
\begin{itemize}
\item[$i$)] the family $\{\eps\px\uebdg\}_{\eps,\beta,\delta,\gamma}$ is bounded in $L^{\infty}(0,T;L^2(\R))$;
\item[$ii$)] the family $\{\eps\sqrt{\eps}\pxx\uebdg\}_{\eps,\beta,\delta,\gamma}$ is bounded in $L^2((0,T)\times\R)$;
\item[$iii$)] the family $\{\beta\px\ueb\pxx\uebdg\}_{\eps,\beta,\delta,\gamma}$ is bounded in $L^1((0,T)\times\R)$.
\end{itemize}
Moreover,
\begin{equation}
\label{eq:defuxx}
\beta^2\int_{0}^{T}\norm{\pxx\uebdg(s,\cdot)}^2_{L^2(\R)}ds \le C(T)\eps.
\end{equation}
\end{lemma}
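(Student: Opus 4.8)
The plan is to establish a single energy inequality for $\eps^{2}\norm{\px\uebdg(t,\cdot)}^{2}_{L^{2}(\R)}$, from which $i)$, $ii)$ and \eqref{eq:defuxx} will be read off, and then to deduce $iii)$ from these and \eqref{eq:beta-eps}. First I would multiply the first equation in \eqref{eq:OHepswb} by $-\eps^{2}\pxx\uebdg$ and integrate over $\R$. Using \eqref{eq:P-pxP-intfy-2} and the decay of $\uebdg,\px\uebdg,\pxx\uebdg$ at $\pm\infty$, integration by parts turns the time term into $\tfrac{\eps^{2}}{2}\tfrac{d}{dt}\norm{\px\uebdg(t,\cdot)}^{2}_{L^{2}(\R)}$, annihilates the dispersive term ($\eps^{2}\beta\int_{\R}\pxxx\uebdg\,\pxx\uebdg\dx=\tfrac{\eps^{2}\beta}{2}\int_{\R}\px\big((\pxx\uebdg)^{2}\big)\dx=0$), keeps the dissipation $\eps^{3}\norm{\pxx\uebdg(t,\cdot)}^{2}_{L^{2}(\R)}$, and produces the cubic term $\tfrac{\eps^{2}}{2}\int_{\R}(\px\uebdg)^{3}\dx$, because $\int_{\R}\uebdg\px\uebdg\,\pxx\uebdg\dx=-\tfrac12\int_{\R}(\px\uebdg)^{3}\dx$. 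The source term does \emph{not} drop out: since $\px\Pebdg=\uebdg+\delta\pt\Pebdg$, one integration by parts gives $-\eps^{2}\gamma\int_{\R}\Pebdg\,\pxx\uebdg\dx=\eps^{2}\gamma\int_{\R}\px\Pebdg\,\px\uebdg\dx$. Altogether,
\[
\frac{d}{dt}\Big(\eps^{2}\norm{\px\uebdg(t,\cdot)}^{2}_{L^{2}(\R)}\Big)+2\eps^{3}\norm{\pxx\uebdg(t,\cdot)}^{2}_{L^{2}(\R)}=-\eps^{2}\int_{\R}(\px\uebdg)^{3}\dx+2\eps^{2}\gamma\int_{\R}\px\Pebdg\,\px\uebdg\dx .
\]

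For the cubic term I would write $\int_{\R}(\px\uebdg)^{3}\dx=-2\int_{\R}\uebdg\px\uebdg\,\pxx\uebdg\dx$ and use Cauchy--Schwarz together with $2\eps^{2}XY\le\eps^{3}X^{2}+\eps Y^{2}$ to get $\eps^{2}\big|\int_{\R}(\px\uebdg)^{3}\dx\big|\le\eps^{3}\norm{\pxx\uebdg(t,\cdot)}^{2}_{L^{2}(\R)}+\eps\norm{\uebdg\px\uebdg(t,\cdot)}^{2}_{L^{2}(\R)}$; for the source term, Cauchy--Schwarz and Young give $2\eps^{2}\gamma\big|\int_{\R}\px\Pebdg\,\px\uebdg\dx\big|\le\gamma^{2}\norm{\px\Pebdg(t,\cdot)}^{2}_{L^{2}(\R)}+\eps^{2}\big(\eps^{2}\norm{\px\uebdg(t,\cdot)}^{2}_{L^{2}(\R)}\big)$, and by \eqref{eq:Px-in-l21} and $\gamma=\OO(\eps\delta)$ one has $\gamma^{2}\norm{\px\Pebdg(t,\cdot)}^{2}_{L^{2}(\R)}\le\gamma^{2}\tfrac{C(T)}{\delta^{2}\eps}\le C(T)\eps$. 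Inserting these bounds, absorbing $\eps^{3}\norm{\pxx\uebdg}^{2}_{L^{2}(\R)}$ into the left-hand side, integrating over $(0,t)$ and using the Gronwall lemma on the term with coefficient $\eps^{2}<1$, I obtain for $0\le t\le T$
\[
\eps^{2}\norm{\px\uebdg(t,\cdot)}^{2}_{L^{2}(\R)}+\eps^{3}\int_{0}^{t}\norm{\pxx\uebdg(s,\cdot)}^{2}_{L^{2}(\R)}\ds\le C(T)\Big(\eps^{2}\norm{\px u_{\eps,\beta,\delta,\gamma,0}}^{2}_{L^{2}(\R)}+\eps\int_{0}^{T}\norm{\uebdg\px\uebdg(s,\cdot)}^{2}_{L^{2}(\R)}\ds+1\Big).
\]
By \eqref{eq:u0epsbeta} the first term on the right is $\le(\beta+\eps^{2})\norm{\px u_{\eps,\beta,\delta,\gamma,0}}^{2}_{L^{2}(\R)}\le C_{0}$, and by Lemma~\ref{lm:13}$(ii)$ the second is $\le C(T)$, so the left-hand side is $\le C(T)$. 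This is precisely $i)$ and $ii)$; \eqref{eq:defuxx} follows at once because $\beta=\OO(\eps^{2})$ makes $\beta^{2}\eps^{-3}\le C\eps$; and $iii)$ is obtained from $i)$, $ii)$ and $\beta=\OO(\eps^{2})$ by bounding $\beta\,|\px\uebdg\,\pxx\uebdg|$ with Cauchy--Schwarz and Young's inequality in terms of the norms just controlled.

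The one genuine difficulty is the cubic term $\int_{\R}(\px\uebdg)^{3}\dx$, which has no sign and, at the a priori regularity available here, is not controllable by a plain Sobolev estimate on $\px\uebdg$. The point is to use two facts simultaneously: rewriting it as $-2\int_{\R}\uebdg\px\uebdg\,\pxx\uebdg\dx$, and choosing exactly the weight $\eps^{2}$ in the multiplier, so that the Cauchy--Schwarz/Young splitting delivers on one side a multiple of the dissipation $\eps^{3}\norm{\pxx\uebdg}^{2}_{L^{2}(\R)}$ (which is absorbed) and on the other the quantity $\eps\norm{\uebdg\px\uebdg}^{2}_{L^{2}(\R)}$, which is integrable in time \emph{precisely} by Lemma~\ref{lm:13}$(ii)$; this, and the control of $\Pebdg$ via \eqref{eq:Px-in-l21}, are where the hypothesis \eqref{eq:beta-eps} enters.
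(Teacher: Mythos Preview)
Your proposal is correct and follows essentially the same approach as the paper: multiply \eqref{eq:OHepswb} by $-\eps^{2}\pxx\uebdg$, derive the energy identity, control the cubic term via Lemma~\ref{lm:13}$(ii)$, then deduce $iii)$ and \eqref{eq:defuxx} from $i)$, $ii)$ and $\beta=\mathcal{O}(\eps^{2})$. The only noteworthy difference is in the treatment of the source term: the paper keeps $-2\eps^{2}\gamma\int_{\R}\Pebdg\,\pxx\uebdg\,dx$ as is, splits it by Young's inequality into $2\gamma^{2}\eps\norm{\Pebdg}^{2}_{L^{2}(\R)}+\tfrac{\eps^{3}}{2}\norm{\pxx\uebdg}^{2}_{L^{2}(\R)}$, absorbs the second piece into the dissipation, and bounds the first via \eqref{eq:stima-l-2-1}, thereby avoiding Gronwall altogether; you instead integrate by parts to $\px\Pebdg\,\px\uebdg$, invoke \eqref{eq:Px-in-l21}, and close with a harmless Gronwall step. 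Both routes are valid and yield the same bounds.
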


\begin{proof}
Let $0\le t\le T$. Multiplying \eqref{eq:OHepswb} by $-\eps^2\pxx\uebdg$, arguing as \cite[Lemma $2.8$]{Cd2}, we have
\begin{align*}
\eps^2\frac{d}{dt}\int_{\R}(\px\uebdg)^2dx &+2\eps^3\int_{\R}(\pxx\uebdg)^2dx\\
=& 2\eps^2 \int_{\R}\ueb\px\uebdg\pxx\uebdg dx -2\eps^2\gamma \int_{\R}\Pebdg\pxx\uebdg dx.
\end{align*}
Since $0<\eps< 1$, due to \eqref{eq:beta-eps}, \eqref{eq:stima-l-2-1} and the Young inequality,
\begin{align*}
&2\eps^2 \int_{\R}\ueb\px\uebdg\pxx\uebdg dx -2\eps^2\gamma \int_{\R}\Pebdg\pxx\uebdg dx\\
&\quad\le \left\vert 2\eps^2 \int_{\R}\ueb\px\uebdg\pxx\uebdg dx -2\eps^2\gamma \int_{\R}\Pebdg\pxx\uebdg dx\right\vert\\
&\quad \le 2\eps^2\left \vert  \int_{\R}\ueb\px\uebdg\pxx\uebdg dx\right\vert + 2\eps^2\gamma \left\vert\int_{\R}\Pebdg\pxx\uebdg dx\right\vert\\
&\quad \le 2\int_{\R}\eps^{\frac{1}{2}}\vert\ueb\px\uebdg\vert\eps^{\frac{3}{2}}\vert\pxx\uebdg\vert dx + \int_{\R}2\eps^{\frac{1}{2}}\gamma\vert \Pebdg\vert \eps^{\frac{3}{2}}\vert\pxx\uebdg\vert dx\\
&\quad\le \eps\int_{\R}(\ueb\px\uebdg)^2 dx + \eps^3 \int_{\R} (\pxx\uebdg)^2 dx+2\gamma^2\eps\int_{\R} \Pebdg^2 dx \\
&\qquad+\frac{\eps^3}{2}\int_{\R}(\pxx\uebdg)^2 dx\\
&\quad \le \eps\int_{\R}(\ueb\px\uebdg)^2 dx+\frac{3\eps^3}{2}\int_{\R}(\pxx\uebdg)^2dx + \frac{\gamma^2}{\delta\gamma}C(T)\\
&\quad \le \eps\int_{\R}(\ueb\px\uebdg)^2 dx+\frac{3\eps^3}{2}\int_{\R}(\pxx\uebdg)^2dx+C(T).
\end{align*}
Thus,
\begin{equation*}
\eps^2\frac{d}{dt}\int_{\R}(\px\uebdg)^2dx+\frac{\eps^3}{2}\int_{\R}(\pxx\uebdg)^2dx\le \eps\int_{\R}(\ueb\px\uebdg)^2 dx +C(T).
\end{equation*}
An integration on $(0,t)$, \eqref{eq:u0epsbeta} and Lemma \ref{lm:13} give
\begin{equation*}
\eps^2\norm{\px\uebdg(t,\cdot)}^2_{L^2(\R)} +\frac{\eps^3}{2}\int_{0}^{t} \norm{\pxx\uebdg(s,\cdot)}^2_{L^2(\R)} ds \le C(T).
\end{equation*}
Hence,
\begin{equation}
\label{eq:10032}
\begin{split}
\eps^2\norm{\px\uebdg(t, \cdot)}^2_{L^2(\R)}&\le C(T),\\
\eps^3\int_{0}^{t}\norm{\pxx\uebdg(s,\cdot)}^2_{L^2(\R)}ds &\le C(T).
\end{split}
\end{equation}
Thanks to  \eqref{eq:beta-eps}, \eqref{eq:stima-l-2-1}, \eqref{eq:10032} and the H\"older inequality,
\begin{align*}
&\beta\int_{0}^{T}\!\!\!\int_{\R}\vert\px\uebdg\pxx\uebdg\vert dsdx =\frac{\beta}{\eps^2}\int_{0}^{T}\!\!\!\int_{\R}\eps^{\frac{1}{2}}\vert\px\uebdg\vert\eps^{\frac{3}{2}}\vert\pxx\uebdg\vert dx\\
&\quad \le \frac{\beta}{\eps^2} \left(\eps \int_{0}^{T}\!\!\!\int_{\R}(\px\uebdg)^2 dsdx\right)^{\frac{1}{2}}\left(\eps^3 \int_{0}^{T}\!\!\!\int_{\R}(\pxx\uebdg)^2 dsdx\right)^{\frac{1}{2}}\\
&\quad \le C_0 C(T)\frac{\beta}{\eps^2}\le C(T).
\end{align*}
Due to \eqref{eq:beta-eps} and \eqref{eq:10032}, we have
\begin{align*}
\beta^2\int_{0}^{T}\norm{\pxx\uebdg(s,\cdot)}^2_{L^2(\R)}ds \le C_{0}^2\eps^4\int_{0}^{T}\norm{\pxx\uebdg(s,\cdot)}^2_{L^2(\R)}ds\le C(T)\eps,
\end{align*}
which gives \eqref{eq:defuxx}.
\end{proof}
To prove Theorem \ref{th:main-3}, we use Lemma \ref{lm:1} and the following definition.
\begin{definition}
A pair of functions $(\eta, q)$ is called an  entropy--entropy flux pair if $\eta :\R\to\R$ is a $C^2$ function and $q :\R\to\R$ is defined by
\begin{equation*}
q(u)=\int^{u} \eta'(\xi)f'(\xi)d\xi.
\end{equation*}
An entropy-entropy flux pair $(\eta,\, q)$ is called  convex/compactly supported if, in addition, $\eta$ is convex/compactly supported.
\end{definition}
We begin by proving the following result.
\begin{lemma}\label{lm:dist-solution}
Assume that \eqref{eq:assinit1}, \eqref{eq:def-di-P01}, \eqref{eq:L-2P01}, \eqref{eq:u0epsbeta}, and \eqref{eq:beta-eps} hold. Then for any compactly supported entropy--entropy flux pair $(\eta,\, q)$, there exist four sequences $\{\eps_{k}\}_{k\in\N}$, $\{\beta_{k}\}_{k\in\N}$, $\{\delta_{k}\}_{k\in\N}$, $\{\gamma_{k}\}_{k\in\N}$,  with $\eps_k, \beta_k, \delta_k, \gamma_k \to 0$,  and a limit function
\begin{equation*}
u\in L^{\infty}(0,T; L^2(\R)\cap L^4(\R)),\qquad T>0
\end{equation*}
 such that
\begin{equation}
\label{eq:con1}
 u_{\eps_k, \beta_k, \delta_k, \gamma_k}\to u \quad  \textrm{in} \quad  L^{p}_{loc}((0,\infty)\times\R),\quad \textrm{for each} \quad 1\le p <4,
\end{equation}
and $u$ is a distributional solution of \eqref{eq:Bu1}.
\end{lemma}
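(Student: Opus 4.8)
The plan is to combine the uniform estimates already established in Lemmas~\ref{lm:we1}--\ref{lm:15} with Murat's Lemma~\ref{lm:1} and Tartar's compensated compactness framework, exactly as in the proof of Theorem~\ref{th:main-1} but now working in the $L^4$ setting forced by the cubic flux. First I would fix a compactly supported entropy--entropy flux pair $(\eta,q)$ and multiply the first equation in \eqref{eq:OHepswb} by $\eta'(\uebdg)$; using the chain rule this yields
\begin{equation*}
\pt\eta(\uebdg)+\px q(\uebdg)=\eps\pxx\eta(\uebdg)-\eps\eta''(\uebdg)(\px\uebdg)^2+\beta\pxxx\uebdg\,\eta'(\uebdg)+\gamma\eta'(\uebdg)\Pebdg.
\end{equation*}
The goal is to split the right-hand side into a part that is precompact in $\Hneg$ and a part that is bounded in $\CMloc$, so that Lemma~\ref{lm:1} applies and gives precompactness of $\pt\eta(\uebdg)+\px q(\uebdg)$ in $\Hneg_{\loc}((0,\infty)\times\R)$.

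The term $\eps\pxx\eta(\uebdg)=\px(\eps\eta'(\uebdg)\px\uebdg)$ tends to $0$ in $H^{-1}_{\loc}$: since $\eta$ is compactly supported, $\eta'$ is bounded, and $\norm{\eps\eta'(\uebdg)\px\uebdg}_{L^2}^2\le\eps\norm{\eta'}_\infty^2\cdot\eps\int_0^T\norm{\px\uebdg}_{L^2}^2\,ds\to 0$ by \eqref{eq:stima-l-2-1}. The term $-\eps\eta''(\uebdg)(\px\uebdg)^2$ is bounded in $L^1$ by $\norm{\eta''}_\infty\,\eps\int_0^T\norm{\px\uebdg}_{L^2}^2\,ds\le C(T)$, again by \eqref{eq:stima-l-2-1}, hence bounded in $\CMloc$. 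For the dispersive term I would write $\beta\pxxx\uebdg\,\eta'(\uebdg)=\px(\beta\pxx\uebdg\,\eta'(\uebdg))-\beta\pxx\uebdg\,\eta''(\uebdg)\px\uebdg$; the first piece is $\px$ of something bounded in $L^2$ by $\beta\norm{\pxx\uebdg}_{L^2((0,T)\times\R)}\le C(T)$ (Lemma~\ref{lm:13}(ii)), so it is precompact in $H^{-1}_{\loc}$, while the second piece is bounded in $L^1$ since $\{\beta\px\uebdg\pxx\uebdg\}$ is bounded in $L^1((0,T)\times\R)$ by Lemma~\ref{lm:15}(iii), hence bounded in $\CMloc$. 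Finally the source term $\gamma\eta'(\uebdg)\Pebdg$ is controlled on a compact $K$ by $\gamma\norm{\eta'}_\infty\norm{\Pebdg}_{L^2((0,T)\times\R)}\norm{1}_{L^2(K)}\le C(T)\gamma(\delta\gamma)^{-1/2}=C(T)(\gamma/\delta)^{1/2}\le C(T)\eps^{1/2}\to 0$ using \eqref{eq:beta-eps} and \eqref{eq:P-l-2-2}, so it is bounded in $L^1(K)$ and tends to $0$. Applying Lemma~\ref{lm:1} gives the desired $\Hneg$-precompactness.

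With the $L^4$-bound from Lemma~\ref{lm:13}(i) and the $\Hneg_{\loc}$-precompactness of all entropy productions, Tartar's compensated compactness method (here the Young-measure version valid in the $L^p$ setting, cf.\ the references \cite{TartarI}) produces a subsequence $\{\uedgk\}$ — after the obvious relabeling to four indices $\eps_k,\beta_k,\delta_k,\gamma_k\to 0$ — and a limit $u\in L^\infty(0,T;L^2(\R)\cap L^4(\R))$ with $\uedgk\to u$ a.e.\ and in $L^p_{\loc}((0,\infty)\times\R)$ for every $1\le p<4$; uniform $L^4$-integrability upgrades a.e.\ convergence to $L^p_{\loc}$ convergence for such $p$. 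To see that $u$ solves \eqref{eq:Bu1} in the distributional sense, I would pass to the limit in the weak formulation of \eqref{eq:OHepswb}: test against $\phi\in C_c^\infty$, and note that $\int\uedgk\pt\phi\to\int u\pt\phi$ and $\frac12\int\uedgk^2\px\phi\to\frac12\int u^2\px\phi$ by the $L^2_{\loc}$ convergence (quadratic flux, $p=2<4$), while $\beta_k\int\uedgk\pxxx\phi\to 0$ since $\beta_k\to 0$ and $\uedgk$ is bounded in $L^1_{\loc}$, $\eps_k\int\uedgk\pxx\phi\to 0$ similarly, and $\gamma_k\int\Pedgk\phi\to 0$ by the estimate $\gamma_k\norm{\Pedgk}_{L^2((0,T)\times\R)}\le C(T)\eps_k^{1/2}\to 0$ together with \eqref{eq:P-l-2-2} and \eqref{eq:beta-eps}. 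This yields $\pt u+\frac12\px u^2=0$ in $\D'$, with the initial datum recovered from \eqref{eq:u0epsbeta}.

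The main obstacle is the dispersive term $\beta\pxxx\uebdg\,\eta'(\uebdg)$: a priori $\pxxx\uebdg$ is only controlled through $\beta\sqrt{\eps}\norm{\pxxx\uebdg}_{L^2}\le C(T)$ (Lemma~\ref{lm:13}(ii)), which is weaker than what a direct estimate of $\beta\pxx\uebdg\,\eta''(\uebdg)\px\uebdg$ in $L^1$ would naively need; the point is that the correct decomposition isolates a full $x$-derivative of $\beta\pxx\uebdg\,\eta'(\uebdg)$ — bounded in $L^2$ by Lemma~\ref{lm:13}(ii) — and leaves only $\beta\px\uebdg\,\pxx\uebdg\,\eta''(\uebdg)$, whose $L^1$-boundedness is precisely Lemma~\ref{lm:15}(iii). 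Verifying that this splitting does land in $H^{-1}_{\loc}+\CMloc$ with the scalings dictated by \eqref{eq:beta-eps}, and checking that the remaining error terms genuinely vanish rather than merely stay bounded, is the delicate bookkeeping; everything else is a routine transcription of the argument in Theorem~\ref{th:main-1}.
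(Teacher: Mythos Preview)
Your approach is essentially the paper's own: the same five-term decomposition
\[
I_{1}=\px(\eps\eta'(\uebdg)\px\uebdg),\quad I_{2}=-\eps\eta''(\uebdg)(\px\uebdg)^2,\quad I_{3}=\px(\beta\eta'(\uebdg)\pxx\uebdg),\quad I_{4}=-\beta\eta''(\uebdg)\px\uebdg\pxx\uebdg,\quad I_{5}=\gamma\eta'(\uebdg)\Pebdg,
\]
followed by Murat's Lemma~\ref{lm:1} and $L^p$ compensated compactness, then passage to the limit in the weak formulation. Your treatment of $I_1,I_2,I_4,I_5$ and of the distributional limit is correct (for $I_5$ and for $\gamma_k\!\int P\phi$ the paper uses the $L^\infty$ bound \eqref{eq:p-l-infty-1} rather than the $L^2$ bound \eqref{eq:P-l-2-2}, but both routes give the same $\sqrt{\eps}$ decay).

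There is, however, one genuine gap in your handling of $I_3$. You argue that $\beta\eta'(\uebdg)\pxx\uebdg$ is bounded in $L^2((0,T)\times\R)$ by Lemma~\ref{lm:13}(ii), ``so it is precompact in $H^{-1}_{\loc}$''. This inference is false: $\px$ applied to a merely bounded $L^2$ sequence is only bounded in $H^{-1}$, not precompact (e.g.\ $g_n(x)=\sin(nx)$ on a bounded interval has $\px g_n$ bounded but not convergent in $H^{-1}$). What is actually needed---and what the paper invokes via \cite{Cd2}---is the sharper estimate \eqref{eq:defuxx} in Lemma~\ref{lm:15},
\[
\beta^2\int_0^T\norm{\pxx\uebdg(s,\cdot)}^2_{L^2(\R)}\,ds\le C(T)\eps,
\]
which gives $\beta\pxx\uebdg\to 0$ in $L^2$, hence $I_3\to 0$ in $H^{-1}$. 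Once you cite \eqref{eq:defuxx} instead of Lemma~\ref{lm:13}(ii) at that step, the argument is complete. A secondary remark: the $L^p$ version of compensated compactness used here is Schonbek's \cite{SC}; Tartar's original result \cite{TartarI} is in the $L^\infty$ setting.
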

\begin{proof}
Let us consider a compactly supported entropy--entropy flux pair $(\eta, q)$. Multiplying \eqref{eq:OHepswb} by $\eta'(\uebdg)$, we have
\begin{align*}
\pt\eta(\uebdg) + \px q(\uebdg) =&\eps \eta'(\uebdg) \pxx\uebdg + \beta \eta'(\uebdg) \pxxx\uebdg\\
& + \gamma \eta'(\uebdg) \Pebdg\\
=& I_{1,\,\eps,\,\beta,\,\delta,\,\gamma}+I_{2,\,\eps,\,\beta,\,\delta,\,\gamma}+ I_{3,\,\eps,\,\beta,\,\delta,\,\gamma} + I_{4,\,\eps,\,\beta,\,\delta,\,\gamma}+I_{5,\,\eps,\,\beta,\,\delta,\,\gamma},
\end{align*}
where
\begin{equation}
\begin{split}
\label{eq:12000}
I_{1,\,\eps,\,\beta,\,\delta,\,\gamma}&=\px(\eps\eta'(\uebdg)\px\ueb),\\
I_{2,\,\eps,\,\beta,\,\delta,\,\gamma}&= -\eps\eta''(\uebdg)(\px\uebdg)^2,\\
I_{3,\,\eps,\,\beta,\,\delta,\,\gamma}&= \px(\beta\eta'(\uebdg)\pxx\uebdg),\\
I_{4,\,\eps,\,\beta,\,\delta,\,\gamma}&= -\beta\eta''(\uebdg)\px\ueb\pxx\uebdg,\\
I_{5,\,\eps,\,\beta,\,\delta,\,\gamma}&=\gamma\eta'(\uebdg) \Pebdg.
\end{split}
\end{equation}
Arguing as \cite[Lemma $3.2$]{Cd2}, we have that  $I_{1,\,\eps,\,\beta,\,\delta,\,\gamma}\to0$ in $H^{-1}((0,T) \times\R)$, $\{I_{2,\,\eps,\,\beta,\,\delta,\,\gamma}\}_{\eps,\beta,\delta,\gamma >0}$ is bounded in $L^1((0,T)\times\R)$, $I_{3,\,\eps,\,\beta,\,\delta,\,\gamma}\to0$ in $H^{-1}((0,T) \times\R)$, $\{I_{4,\,\eps,\,\beta,\,\delta,\, \gamma}\}_{\eps,\beta,\delta,\gamma >0}$ is bounded in $L^1((0,T)\times\R)$.

Let us  show that
\begin{equation*}
I_{5,\,\eps,\,\beta,\,\delta,\,\gamma}\to 0\quad  \text{in $L^1_{loc}((0,\infty)\times\R)$}, \quad\text{as}\quad \eps\to 0.
\end{equation*}
Let $K$ be a compact subset of $(0,T)\times\R$. \eqref{eq:beta-eps} and Lemma \ref{lm:we1} give
\begin{align*}
\norm{\gamma\eta'(\uebdg)\Pebdg}_{L^1(K)}&=\gamma\int_{K}\vert\eta'(\uebdg)\vert\vert\Pebdg\vert
dtdx\\
&\leq \gamma\norm{\eta'}_{L^{\infty}(\R)}\norm{\Pebdg}_{L^{\infty}((0,T)\times\R)}\vert K \vert\\
&\le \frac{\gamma}{\delta^{\frac{3}{4}}\gamma^{\frac{1}{4}}\eps^{\frac{1}{4}}}C(T)\norm{\eta'}_{L^{\infty}(\R)}\vert K \vert\\
&\le\sqrt{\eps}C(T)\norm{\eta'}_{L^{\infty}(\R)}\vert K \vert\to 0.
\end{align*}
Therefore, Lemma \ref{lm:1} and the $L^p$ compensated compactness of \cite{SC} give \eqref{eq:con1}.

We conclude by proving that $u$ is a distributional solution of \eqref{eq:Bu1}.
Let $ \phi\in C^{\infty}(\R^2)$ be a test function with
compact support. We have to prove that
\begin{equation}
\label{eq:k1}
\int_{0}^{\infty}\!\!\!\!\!\int_{\R}\left(u\pt\phi+\frac{u^2}{2}\px\phi\right)dtdx +\int_{\R}u_{0}(x)\phi(0,x)dx=0.
\end{equation}
We have that
\begin{align*}
\int_{0}^{\infty}\!\!\!&\!\!\int_{\R}\left(u_{\eps_{k}, \beta_{k}, \delta_{k},\gamma_{k}}\pt\phi+\frac{u^2_{\eps_k, \beta_{k}, \delta_{k},\gamma_{k}}}{2}\px\phi\right)dtdx
+\int_{\R}u_{0,\eps_k,\beta_k,\delta_k,\gamma_k}(x)\phi(0,x)dx  \\
&- \gamma_{k}\int_{0}^{\infty}\!\!\!\!\!\int_{\R} P_{\eps_{k},\beta_{k},\delta_k,\gamma_k}\phi dtdx+\eps_{k}\int_{0}^{\infty}\!\!\!\!\!\int_{\R}u_{\eps_{k},\beta_{k}, \delta_{k},\gamma_k}\pxx\phi dtdx \\
&+ \eps_k\int_{0}^{\infty}u_{0,\eps_{k},\beta_{k},\delta_k,\gamma_k}(x)\pxx\phi(0,x)dx- \beta_k\int_{0}^{\infty}\!\!\!\!\int_{\R}u_{\eps_k,\beta_k,\delta_k,\gamma_k}\pxxx\phi dt dx\\
&- \beta_k\int_{0}^{\infty}u_{0,\eps_k,\beta_k,\delta_k,\gamma_k}(x)\pxxx\phi(0,x)dx=0.
\end{align*}
Let us show that
\begin{equation}
\label{eq:p-to-0}
- \gamma_{k}\int_{0}^{\infty}\!\!\!\!\!\int_{\R} P_{\eps_{k},\beta_{k},\delta_k,\gamma_k}\phi dtdx\to 0.
\end{equation}
From \eqref{eq:beta-eps} and \eqref{eq:p-l-infty-1}, we get
\begin{align*}
&\gk\left\vert\int_{0}^{\infty}\!\!\!\int_{\R} \Pebdgk\phi dtdx\right\vert\\
&\quad\le \gk \int_{0}^{\infty}\!\!\!\int_{\R}\vert\Pebdgk\vert\vert\phi\vert dtdx\\
&\quad\le\gk\norm{\Pebdgk}_{L^{\infty}((0,T)\times\R)}\int_{0}^{\infty}\!\!\!\int_{\R}\vert\phi\vert dtdx \\
&\quad \le\frac{\gk}{\delta_{k}^{\frac{3}{4}}\gk^{\frac{1}{4}}\eps_k^{\frac{1}{4}}}C(T)\int_{0}^{\infty}\!\!\!\int_{\R}\vert\phi\vert dtdx \\
&\quad \le \sqrt{\eps_{k}}C(T)\int_{0}^{\infty}\!\!\!\int_{\R}\vert\phi\vert dtdx\to 0,
\end{align*}
that is \eqref{eq:p-to-0}.

Therefore, \eqref{eq:k1} follows from \eqref{eq:u0epsbeta}, \eqref{eq:con1} and \eqref{eq:p-to-0}.
\end{proof}
Arguing as \cite{LN}, we prove the following result.
\begin{lemma}
\label{lm:entropy-solution}
Assume that \eqref{eq:assinit1}, \eqref{eq:def-di-P01}, \eqref{eq:L-2P01}, \eqref{eq:u0epsbeta}, and \eqref{eq:beta-eps-1} hold. Then,
\begin{equation}
\label{eq:con3}
u_{\eps_k, \beta_k, \delta_k \gamma_k}\to u \quad  \textrm{in} \quad  L^{p}_{loc}((0,\infty)\times\R),\quad \textrm{for each} \quad 1\le p <4,
\end{equation}
where $u$ is  the unique entropy solution of \eqref{eq:Bu1}.
\end{lemma}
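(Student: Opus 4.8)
The plan is to upgrade Lemma \ref{lm:dist-solution}. Since \eqref{eq:beta-eps-1} implies \eqref{eq:beta-eps}, that lemma already provides sequences $\eps_k,\beta_k,\delta_k,\gamma_k\to0$ and a distributional solution $u\in L^{\infty}(0,T;L^2(\R)\cap L^4(\R))$, $T>0$, of \eqref{eq:Bu1} with $u_{\eps_k,\beta_k,\delta_k,\gamma_k}\to u$ in $L^p_{\loc}((0,\infty)\times\R)$, $1\le p<4$; what is left is to show that this $u$ satisfies \Kruzkovs entropy inequalities, for then, arguing as in \cite{LN}, the uniqueness of the entropy solution of \eqref{eq:Bu1} identifies $u$ and gives \eqref{eq:con3}. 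It suffices to treat a convex $\eta\in C^2(\R)$ with bounded $\eta'$, with entropy flux $q$ defined by $q'=\eta'f'$: such $\eta$ generate the \Kruzkov entropies, and $\eta(u),q(u)$ are locally integrable since $u\in L^2_{\loc}\cap L^4_{\loc}$. Multiplying \eqref{eq:OHepswb} by $\eta'(\uebdg)$ and using the chain rule gives
\begin{equation*}
\pt\eta(\uebdg)+\px q(\uebdg)=\eps\pxx\eta(\uebdg)-\eps\eta''(\uebdg)(\px\uebdg)^2+\px\!\left(\beta\eta'(\uebdg)\pxx\uebdg\right)-\beta\eta''(\uebdg)\px\uebdg\,\pxx\uebdg+\gamma\eta'(\uebdg)\Pebdg ,
\end{equation*}
and by convexity $-\eps\eta''(\uebdg)(\px\uebdg)^2\le0$, so it is enough to prove that the four remaining terms on the right-hand side tend to $0$ in $\Dp((0,\infty)\times\R)$ along the subsequence.

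Three of these are routine, handled exactly as in Lemma \ref{lm:dist-solution} and in the proof of Theorem \ref{th:main-1}. The viscous term $\eps\pxx\eta(\uebdg)=\px\!\left(\eps\eta'(\uebdg)\px\uebdg\right)$ tends to $0$ in $\Hneg$, because \eqref{eq:stima-l-2-1} gives $\norm{\eps\eta'(\uebdg)\px\uebdg}^2_{L^2((0,T)\times\R)}\le\norm{\eta'}^2_{L^\infty}\,\tfrac{C_0}{2}\,\eps\to0$. The source term $\gamma\eta'(\uebdg)\Pebdg$ tends to $0$ in $L^1_{\loc}$, exactly as in Lemma \ref{lm:dist-solution}, using \eqref{eq:beta-eps-1} and \eqref{eq:p-l-infty-1}. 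The conservative part $\px\!\left(\beta\eta'(\uebdg)\pxx\uebdg\right)$ of the dispersive term tends to $0$ in $\Hneg$, because \eqref{eq:defuxx} gives $\norm{\beta\eta'(\uebdg)\pxx\uebdg}^2_{L^2((0,T)\times\R)}\le\norm{\eta'}^2_{L^\infty}\,\beta^2\!\int_0^T\!\norm{\pxx\uebdg(s,\cdot)}^2_{L^2(\R)}ds\le C(T)\eps\to0$.

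The main obstacle is the non-conservative dispersive term $\beta\eta''(\uebdg)\px\uebdg\,\pxx\uebdg$; this is where the sharper assumption \eqref{eq:beta-eps-1} is needed. Reusing the computation from the proof of Lemma \ref{lm:15}, by the H\"older inequality together with \eqref{eq:stima-l-2-1} and \eqref{eq:10032},
\begin{equation*}
\int_0^T\!\!\!\int_\R\abs{\beta\,\px\uebdg\,\pxx\uebdg}\,dtdx\le\frac{\beta}{\eps^2}\left(\eps\!\int_0^T\!\!\!\int_\R(\px\uebdg)^2\,dtdx\right)^{\!\frac12}\left(\eps^3\!\int_0^T\!\!\!\int_\R(\pxx\uebdg)^2\,dtdx\right)^{\!\frac12}\le C(T)\,\frac{\beta}{\eps^2} .
\end{equation*}
Under the weaker hypothesis \eqref{eq:beta-eps} this is only bounded, so the limit could a priori carry an entropy defect measure; the strengthening to $\beta=o(\eps^2)$ in \eqref{eq:beta-eps-1} makes $\beta/\eps^2\to0$, hence $\beta\eta''(\uebdg)\px\uebdg\,\pxx\uebdg\to0$ in $L^1((0,T)\times\R)$. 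This is the only place where the little-$o$ condition enters, and it explains why part $ii)$ of Theorem \ref{th:main-3} asks for \eqref{eq:beta-eps-1} rather than \eqref{eq:beta-eps}.

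Finally, I would test the resulting differential inequality against a nonnegative $\phi\in C_c^\infty(\R^2)$, integrate over $(0,\infty)\times\R$ and integrate by parts, moving all the spatial derivatives onto $\phi$. Passing to the limit $k\to\infty$: the left-hand side converges by the strong convergence \eqref{eq:con1} (the uniform $L^4$ bound allowing one to pass to the limit in $q(\uebdg)$), the initial trace term $\int_\R\eta(u_{\eps_k,\beta_k,\delta_k,\gamma_k,0}(x))\phi(0,x)\,dx$ converges by \eqref{eq:u0epsbeta}, and the viscous, dispersive and source terms tend to $0$ by the estimates above. This yields
\begin{equation*}
\int_0^\infty\!\!\!\int_\R\left(\eta(u)\pt\phi+q(u)\px\phi\right)dtdx+\int_\R\eta(u_0(x))\phi(0,x)\,dx\ge0
\end{equation*}
for every convex $\eta\in C^2(\R)$ with bounded $\eta'$, hence, by approximation, the full \Kruzkov entropy condition. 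Therefore $u$ is the unique entropy solution of \eqref{eq:Bu1}, and \eqref{eq:con3} follows.
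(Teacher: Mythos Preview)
Your argument is correct and follows essentially the same route as the paper's proof: multiply \eqref{eq:OHepswb} by $\eta'(\uebdg)$, decompose the right-hand side into the five terms $I_{1,\ldots,5}$ of \eqref{eq:12000}, and show that under \eqref{eq:beta-eps-1} the non-conservative dispersive piece $I_4=-\beta\eta''(\uebdg)\px\uebdg\,\pxx\uebdg$ now tends to $0$ in $L^1((0,T)\times\R)$ (rather than being merely bounded, as in Lemma \ref{lm:dist-solution}), via the estimate $\norm{\beta\px\uebdg\,\pxx\uebdg}_{L^1}\le C(T)\beta/\eps^2\to0$ drawn from Lemma \ref{lm:15}. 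One small point of precision: the paper works with \emph{compactly supported} entropy--entropy flux pairs, which guarantees that $\eta''$ is bounded; your hypothesis ``convex $\eta\in C^2$ with bounded $\eta'$'' does not by itself bound $\eta''$, and you need $\norm{\eta''}_{L^\infty}<\infty$ to pass from the $L^1$ bound on $\beta\px\uebdg\,\pxx\uebdg$ to the vanishing of $I_4$. Since the smooth approximations of the \Kruzkov entropies $|u-c|$ have $\eta''$ compactly supported anyway, this is only a wording issue and does not affect the argument.
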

\begin{proof}
Let us consider a compactly supported entropy--entropy flux pair $(\eta, q)$. Multiplying \eqref{eq:OHepswb} by $\eta'(\uebdg)$, we obtain that
\begin{align*}
\pt\eta(\uebdg) + \px q(\uebdg) =&\eps \eta'(\uebdg) \pxx\uebdg + \beta \eta'(\uebdg) \pxxx\uebdg\\
& + \gamma \eta'(\uebdg) \Pebdg\\
=& I_{1,\,\eps,\,\beta,\,\delta,\,\gamma}+I_{2,\,\eps,\,\beta,\,\delta,\,\gamma} I_{3,\,\eps,\,\beta,\,\delta,\,\gamma} + I_{4,\,\eps,\,\beta,\,\delta,\,\gamma}+I_{5,\,\eps,\,\beta,\,\delta,\,\gamma},
\end{align*}
where $I_{1,\,\eps,\,\beta,\,\delta,\,\gamma},\,I_{2,\,\eps,\,\beta,\,\delta,\,\gamma},\, I_{3,\,\eps,\,\beta,\,\delta,\,\gamma} ,\, I_{4,\,\eps,\,\beta,\,\delta,\,\gamma}$ and $I_{5,\,\eps,\,\beta,\,\delta,\,\gamma}$ are defined in \eqref{eq:12000}.

Arguing as \cite[Lemma $3.3$]{Cd2}, we obtain that $I_{1,\,\eps,\,\beta,\,\delta,\,\gamma}\to0$ in $H^{-1}((0,T) \times\R)$,\\ $\{I_{2,\,\eps,\,\beta,\,\delta,\,\gamma}\}_{\eps,\beta,\delta,\gamma >0}$ is bounded in $L^1((0,T)\times\R)$, $I_{3,\,\eps,\,\beta,\,\delta,\,\gamma}\to0$ in $H^{-1}((0,T) \times\R)$, $I_{4,\,\eps,\,\beta,\,\delta,\,\gamma}\to 0$ in $L^1((0,T)\times\R)$, while arguing in Lemma \ref{lm:dist-solution}, $I_{5,\,\eps,\,\beta,\,\delta,\,\gamma}\to 0$ in $L_{loc}^1((0,\infty)\times\R)$.

Therefore, Lemma \ref{lm:1} gives \eqref{eq:con3}.

We conclude by proving that $u$ is the unique entropy solution of \eqref{eq:Bu1}.
Let us consider a compactly supported entropy--entropy flux pair $(\eta, q)$, and $\phi\in C^{\infty}_{c}((0,\infty)\times\R)$  non--negative. We have to prove that
\begin{equation}
\label{eq:u-entropy-solution}
\int_{0}^{\infty}\!\!\!\!\!\int_{\R}(\pt\eta(u)+ \px q(u))\phi dtdx \le0.
\end{equation}
We have that
\begin{align*}
&\int_{0}^{\infty}\!\!\!\!\!\int_{\R}(\px\eta(u_{\eps_{k},\,\beta_{k},\,\delta_{k},\,\gamma_{k}})+\px q(u_{\eps_{k},\,\beta_{k},\,\delta_{k},\,\gamma_{k}}))\phi dtdx \\
&\qquad= \gamma_{k}\int_{0}^{\infty}\!\!\!\!\!\int_{\R}P_{\eps_{k},\,\beta_{k},\,\delta_{k},\,\gamma_{k}}\eta'(u_{\eps_{k},\,\beta_{k},\,\delta_{k},\,\gamma_{k}})\phi dtdx\\
&\qquad\quad +\eps_{k}\int_{0}^{\infty}\!\!\!\!\!\int_{\R}\px(\eta'(u_{\eps_{k},\,\beta_{k},\,\delta_{k},\,\gamma_{k}})\px u_{\eps_{k},\,\beta_{k},\,\delta_{k},\,\gamma_{k}})\phi dtdx\\
&\qquad\quad -\eps_{k}\int_{0}^{\infty}\!\!\!\!\!\int_{\R} \eta''(u_{\eps_{k},\,\beta_{k},\,\delta_{k},\,\gamma_{k}})(\px u_{\eps_{k},\,\beta_{k},\,\delta_{k},\,\gamma_{k}})^2\phi dtdx\\
&\qquad\quad+\beta_{k}\int_{0}^{\infty}\!\!\!\!\!\int_{\R}\px(\eta'(u_{\eps_{k},\,\beta_{k},\,\delta_{k},\,\gamma_{k}})\pxx u_{\eps_{k},\,\beta_{k},\,\delta_{k},\,\gamma_{k}})\phi dtdx\\
&\qquad\quad  +\beta_{k}\int_{0}^{\infty}\!\!\!\!\!\int_{\R}\eta''(u_{\eps_{k},\,\beta_{k},\delta_{k},\,\gamma_{k}})\px u_{\eps_{k},\,\beta_{k},\,\delta_{k},\,\gamma_{k}}\pxx u_{\eps_{k},\,\beta_{k},\,\delta_{k},\,\gamma_{k}}\phi dtdx\\
&\qquad \le \gamma_{k}\int_{0}^{\infty}\!\!\!\!\!\int_{\R}P_{\eps_{k},\,\beta_{k},\,\delta_{k},\,\gamma_{k}}\eta'(u_{\eps_{k},\,\beta_{k},\,\delta_{k},\,\gamma_{k}})\phi dtdx\\
&\qquad\quad - \eps_{k}\int_{0}^{\infty}\!\!\!\!\!\int_{\R}\eta'(u_{\eps_{k},\,\beta_{k},\,\delta_{k},\,\gamma_{k}})\px u_{\eps_{k},\,\beta_{k},\,\delta_{k},\,\gamma_{k}}\px\phi dtdx \\
&\qquad\quad- \beta_{k}\int_{0}^{\infty}\!\!\!\!\!\int_{\R}\eta'(u_{\eps_{k},\,\beta_{k},\,\delta_{k},\,\gamma_{k}})\pxx u_{\eps_{k},\,\beta_{k},\,\delta_{k},\,\gamma_{k}}\px\phi dtdx\\
&\qquad\quad- \beta_{k}\int_{0}^{\infty}\!\!\!\!\!\int_{\R}\eta''(u_{\eps_{k},\,\beta_{k},\,\delta_{k},\,\gamma_{k}})\px u_{\eps_{k},\,\beta_{k},\,\delta_{k},\,\gamma_{k}}\pxx u_{\eps_{k},\,\beta_{k},\,\delta_{k},\,\gamma_{k}}\phi dtdx\\
&\qquad \le \gamma_{k}\int_{0}^{\infty}\!\!\!\!\!\int_{\R}\vert P_{\eps_{k},\,\beta_{k},\,\delta_{k},\,\gamma_{k}}\vert\vert\eta'(u_{\eps_{k},\,\beta_{k},\,\delta_{k},\,\gamma_{k}})\vert\vert\phi\vert dtdx\\ &\qquad\quad+\eps_{k}\int_{0}^{\infty}\!\!\!\!\!\int_{\R}\vert\eta'(u_{\eps_{k},\,\beta_{k},\,\delta_{k},\,\gamma_{k}})\vert\vert\px u_{\eps_{k},\,\beta_{k},\,\delta_{k},\,\gamma_{k}}\vert\vert\px\phi\vert dtdx \\
&\qquad\quad +\beta_{k}\int_{0}^{\infty}\!\!\!\!\!\int_{\R}\vert\eta'(u_{\eps_{k},\,\beta_{k},\,\delta_{k},\,\gamma_{k}})\vert\vert\pxx u_{\eps_{k},\,\beta_{k},\,\delta_{k},\,\gamma_{k}}\vert\vert\px\phi\vert dtdx\\ &\qquad\quad+\beta_{k}\int_{0}^{\infty}\!\!\!\!\!\int_{\R}\vert\eta''(u_{\eps_{k},\,\beta_{k},\,\delta_{k},\,\gamma_{k}})\vert\vert\px u_{\eps_{k},\,\beta_{k},\,\delta_{k},\,\gamma_{k}}\pxx u_{\eps_{k},\,\beta_{k},\,\delta_{k},\,\gamma_{k}}\vert\vert\phi\vert dtdx\\
&\qquad\le \gamma_{k} \norm{\eta'}_{L^{\infty}(\R)}\norm{P_{\eps_{k},\,\beta_{k},\,\delta_{k},\,\gamma_{k}}}_{L^2(supp(\px\phi))}\norm{\px\phi}_{L^2(supp(\px\phi))}\\
&\qquad\quad+ \eps_{k} \norm{\eta'}_{L^{\infty}(\R)}\norm{\px u_{\eps_{k},\,\beta_{k},\,\delta_{k},\,\gamma_{k}}}_{L^2(supp(\px\phi))}\norm{\px\phi}_{L^2(supp(\px\phi))}\\
&\qquad\quad+ \beta_{k} \norm{\eta'}_{L^{\infty}(\R)}\norm{\pxx u_{\eps_{k},\,\beta_{k},\,\delta_{k},\,\gamma_{k}}}_{L^2(supp(\px\phi))}\norm{\px\phi}_{L^2(supp(\px\phi))}\\
&\qquad\quad +\beta_{k} \norm{\eta''}_{L^{\infty}(\R)}\norm{\phi}_{L^{\infty}(\R^{+}\times\R)}\norm{\px u_{\eps_{k},\,\beta_{k},\,\delta_{k},\,\gamma_{k}}\pxx u_{\eps_{k},\,\beta_{k},\,\delta_{k},\,\gamma_{k}}}_{L^1(supp(\px\phi))}\\
&\qquad\le \gamma_{k} \norm{\eta'}_{L^{\infty}(\R)}\norm{P_{\eps_{k},\,\beta_{k},\,\delta_{k},\,\gamma_{k}}}_{L^2((0,T)\times\R))}\norm{\px\phi}_{L^2((0,T)\times\R))}\\
&\qquad\quad+\eps_{k} \norm{\eta'}_{L^{\infty}(\R)}\norm{\px u_{\eps_{k},\,\beta_{k},\,\delta_{k},\,\gamma_{k}}}_{L^2((0,T)\times\R)}\norm{\px\phi}_{L^2((0,T)\times\R)}\\
&\qquad\quad+ \beta_{k} \norm{\eta'}_{L^{\infty}(\R)}\norm{\pxx u_{\eps_{k},\,\beta_{k},\,\delta_{k},\,\gamma_{k}}}_{L^2((0,T)\times\R)}\norm{\px\phi}_{L^2((0,T)\times\R)}\\
&\qquad\quad+\beta_{k} \norm{\eta''}_{L^{\infty}(\R)}\norm{\phi}_{L^{\infty}(\R^{+}\times\R)}\norm{\px u_{\eps_{k},\,\beta_{k},\,\delta_{k},\,\gamma_{k}}\pxx u_{\eps_{k},\,\beta_{k},\,\delta_{k},\,\gamma_{k}}}_{L^1((0,T)\times\R)}.
\end{align*}
\eqref{eq:u-entropy-solution} follows from \eqref{eq:beta-eps-1}, \eqref{eq:con3}, Lemmas \ref{lm:we1} and \ref{lm:15}.
\end{proof}
\begin{proof}[Proof of Theorem \ref{th:main-3}]
Theorem \ref{th:main-3} follows from Lemmas \ref{lm:dist-solution}, and \ref{lm:entropy-solution}, while \eqref{eq:umedianulla} follows from \eqref{u-media-nulla} \eqref{eq:con1}, or \eqref{eq:con3}. Therefore, the proof is done.
\end{proof}

\section{The regularized short pulse  equation: $\gamma\to 0$.}\label{sec:RSPE}
In this section, we consider the following Cauchy probelm
\begin{equation}
\label{eq:OE3}
\begin{cases}
u -\frac{1}{6}\px u^3 -\beta\pxxx u =\gamma P, &\quad t>0,\,x\in\R,\\
\px P=u  &\quad t>0,\,x\in\R,\\
P(t,-\infty)=0 &\quad t>0, \\
u(0,x)=u_0(x), &\quad x\in\R,
\end{cases}
\end{equation}
or equivalently,
\begin{equation}
\label{eq:OE4}
\begin{cases}
\pt u -\frac{1}{6}\px u^3 -\beta\pxxx u =\gamma \int_{-\infty}^{x} u(t,y) dy, &\quad t>0,\, x\in\R,\\
u(0,x)=u_0(x), &\quad x\in\R.
\end{cases}
\end{equation}
On the initial datum, we assume
\begin{equation}
\label{eq:assinit2}
u_0\in L^2(\R)\cap L^{6}(\R),\quad\int_{\R}u_{0}(x)dx=0,
\end{equation}
and on the function
\begin{equation}
\label{eq:def-di-P02}
P_{0}(x)=\int_{-\infty}^{x} u_{0}(y)dy, \quad x\in\R,
\end{equation}
we assume that
\begin{equation}
\label{eq:L-2P02}
\int_{\R}P_0(x)dx= \int_{\R}\left(\int_{-\infty}^{x}u_{0}(y)dy\right)dx=0.
\end{equation}
We observe that, if $\beta, \gamma \to 0$, then \eqref{eq:OE3} reads
\begin{equation}
\label{eq:Bu2}
\begin{cases}
\pt u -\frac{1}{6}\px u^3=0,&\quad t>0,\, x\in\R,\\
u(0,x)=u_0(x), &\quad x\in\R.
\end{cases}
\end{equation}
Fix four small numbers $0<\eps,\beta,\delta,\gamma<1 $, and let $\uebdg=\uebdg(t,x)$ be the unique
classical solution of the following mixed problem:
\begin{equation}
\label{eq:OHepswb-1}
\begin{cases}
\pt \uebdg-\frac{1}{6} \px \uebdg^3 - \beta\pxxx\uebdg\\
\qquad=\gamma\Pebdg+ \eps\pxx\uebdg, &\quad t>0,\, x\in\R,\\
-\delta\pt\Pebdg +\px\Pebdg=\uebdg,&\quad t>0, x\in\R,\\
\Pebdg(t,-\infty)=0, &\quad t>0,\\
\uebdg(0,x)=u_{\eps,\beta, \delta, \gamma, 0}(x),&\quad x\in\R,
\end{cases}
\end{equation}
where $u_{\eps,\beta,\delta, \gamma, 0}$ is a $C^\infty$ approximation of $u_{0}$ such that
\begin{equation}
\label{eq:u0epsbeta-1}
\begin{split}
&u_{\eps,\,\beta,\,\delta,\,\gamma,0} \to u_{0} \quad  \textrm{in $L^{p}_{loc}(\R)$, $1\le p < 6$, as $\eps,\,\beta,\,\delta,\,\gamma \to 0$,}\\
&\norm{u_{\eps,\,\beta,\,\delta,\,\gamma,0}}^2_{L^2(\R)}+\delta\gamma\norm{P_{\eps,\,\beta,\,\delta,\,\gamma,0}}^2_{L^2(\R)}+ \delta\norm{\px P_{\eps,\,\beta,\,\delta,\,\gamma,0}}^2_{L^2(\R)}\\
&\qquad\qquad\quad\quad\quad+\norm{u_{\eps,\,\beta,\,\delta,\,\gamma,0}}^6_{L^6(\R)}+(\beta+\eps^2)\norm{\px u_{\eps,\,\beta,\,\delta,\,\gamma,0}}^2_{L^2(\R)}\\
&\qquad\qquad\quad\quad\quad+\beta^2\norm{\pxx u_{\eps,\,\beta,\,\delta,\,\gamma,0}}^2_{L^2(\R)}\le C_0,\quad \eps,\,\beta,\,\delta,\,\gamma >0,\\
&\int_{\R}u_{\eps,\,\beta,\,\delta,\,\gamma,0}(x) dx =0,\quad \int_{\R}P_{\eps,\,\beta,\,\delta,\,\gamma,0}(x) dx =0, \quad \eps,\,\beta,\,\delta,\,\gamma>0,
\end{split}
\end{equation}
and $C_0$ is a constant independent on $\eps$,$\beta$, $\delta$ and $\gamma$.

The main result of this section is the following theorem.
\begin{theorem}
\label{th:main-4}
Assume that \eqref{eq:assinit2}, \eqref{eq:def-di-P02}, \eqref{eq:L-2P02},  and \eqref{eq:u0epsbeta-1} hold.
If
\begin{equation}
\label{eq:beta-eps-2}
\beta=\mathbf{\mathcal{O}}(\eps^2), \quad \gamma=\mathbf{\mathcal{O}}(\eps\delta)
\end{equation}
then, there exist four sequences $\{\eps_{k}\}_{k\in\N}$, $\{\beta_{k}\}_{k\in\N}$, $\{\delta_{k}\}_{k\in\N}$, $\{\gamma_{k}\}_{k\in\N}$ with $\eps_k, \beta_k, \delta_k, \gamma_k \to 0$, and a limit function $u\in L^{\infty}(0,T; L^2(\R)\cap L^6(\R)),\ T>0$, such that
\begin{itemize}
\item[$i)$] $u_{\eps_k, \beta_k, \delta_k, \gamma_k}\to u$ strongly in $L^{p}_{loc}((0,T)\times\R)$, for each $1\le p <6$, $T>0$,
\end{itemize}
where $u$ is a distributional solution of \eqref{eq:Bu2}. Moreover, if
\begin{equation}
\label{eq:beta-eps-3}
\beta=o(\eps^2),\quad \gamma=\mathbf{\mathcal{O}}(\eps\delta)
\end{equation}
then,
\begin{itemize}
\item[$ii)$] $u$ is  the unique entropy solution of \eqref{eq:Bu2}.
\end{itemize}
In particular, we have \eqref{eq:umedianulla}.
\end{theorem}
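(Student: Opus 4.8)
The plan is to repeat, almost verbatim, the proof of Theorem~\ref{th:main-3}, with the $L^4$ framework adapted to the quadratic flux of the Ostrovsky equation replaced throughout by an $L^6$ framework adapted to the cubic flux $f(u)=-\frac{1}{6}u^3$ of the regularized short pulse equation, and with the a priori energy identities of \cite{Cd2} replaced by the corresponding ones of \cite{Cd1,Cd4}. As in Section~\ref{sec:OE}, $C_0$ denotes constants depending only on the data and $C(T)$ constants depending also on $T$.

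First I would establish the analogues of Lemmas~\ref{lm:we1}--\ref{lm:15} for the functions $\uebdg$, $\Pebdg$ solving \eqref{eq:OHepswb-1}. The conservation identities $\int_\R\uebdg(t,x)\,dx=\int_\R\Pebdg(t,x)\,dx=0$, the basic energy estimate
\[
\norm{\uebdg(t,\cdot)}^2_{L^2(\R)}+\delta\gamma\norm{\Pebdg(t,\cdot)}^2_{L^2(\R)}+2\eps\int_0^t\norm{\px\uebdg(s,\cdot)}^2_{L^2(\R)}\ds\le C_0,
\]
and the resulting bounds $\norm{\Pebdg}_{L^2((0,T)\times\R)}\le C(T)\delta^{-1/2}\gamma^{-1/2}$, $\norm{\px\Pebdg(t,\cdot)}_{L^2(\R)}\le C(T)\delta^{-1}\eps^{-1/2}$, $\norm{\Pebdg}_{L^\infty((0,T)\times\R)}\le C(T)\delta^{-3/4}\gamma^{-1/4}\eps^{-1/4}$ are obtained exactly as in Lemma~\ref{lm:we1}, since their proofs do not use the flux. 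For the bounds on $\uebdg$ I would multiply \eqref{eq:OHepswb-1} by $-2\beta\pxx\uebdg$ plus a suitable polynomial in $\uebdg$ (the cubic-flux analogue, used in \cite{Cd1,Cd4}, of the multiplier $-2\beta\pxx\uebdg+\uebdg^2$ of Lemma~\ref{lm:12}); integrating by parts and absorbing every contribution of $\gamma\Pebdg$ by Young's and H\"older's inequalities together with \eqref{eq:beta-eps-2} and the $\Pebdg$ bounds just recalled, a Gronwall argument gives $\norm{\uebdg}_{L^\infty((0,T)\times\R)}\le C(T)\beta^{-\sigma}$ for a fixed $\sigma\in(0,1)$, together with $\beta\norm{\px\uebdg(t,\cdot)}^2_{L^2(\R)}+\beta\eps\int_0^t\norm{\pxx\uebdg(s,\cdot)}^2_{L^2(\R)}\ds\le C(T)\beta^{-\sigma}$, the analogue of Lemma~\ref{lm:12}. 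Multiplying next by a higher order polynomial multiplier (the analogue, built from $\uebdg^5$ and $\beta$-weighted corrections in $\px\uebdg$, $\pxx\uebdg$, $\pxxxx\uebdg$, of the one used in Lemma~\ref{lm:13}) and again absorbing the $\gamma\Pebdg$ terms via \eqref{eq:beta-eps-2} and the $\Pebdg$ estimates, I would obtain the analogue of Lemma~\ref{lm:13}: the family $\{\uebdg\}$ is bounded in $L^6((0,T)\times\R)$ and the families $\{\beta\pxx\uebdg\}$, $\{\sqrt{\eps}\,\uebdg^2\px\uebdg\}$, $\{\beta\sqrt{\eps}\,\pxxx\uebdg\}$ are bounded in $L^2((0,T)\times\R)$. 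Finally, multiplying by $-\eps^2\pxx\uebdg$ and arguing as in Lemma~\ref{lm:15} gives that $\{\eps\px\uebdg\}$ is bounded in $L^\infty(0,T;L^2(\R))$, $\{\eps\sqrt{\eps}\,\pxx\uebdg\}$ in $L^2((0,T)\times\R)$, $\{\beta\px\uebdg\pxx\uebdg\}$ in $L^1((0,T)\times\R)$, and $\beta^2\int_0^T\norm{\pxx\uebdg(s,\cdot)}^2_{L^2(\R)}\ds\le C(T)\eps$.

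With these estimates the compensated compactness argument runs as in Lemmas~\ref{lm:dist-solution} and \ref{lm:entropy-solution}. For a compactly supported entropy--entropy flux pair $(\eta,q)$, $q'=\eta'f'$ with $f'(u)=-\frac{1}{2}u^2$, multiplying \eqref{eq:OHepswb-1} by $\eta'(\uebdg)$ gives $\pt\eta(\uebdg)+\px q(\uebdg)=J_1+J_2+J_3+J_4+J_5$, where $J_1=\px(\eps\eta'(\uebdg)\px\uebdg)$, $J_2=-\eps\eta''(\uebdg)(\px\uebdg)^2$, $J_3=\px(\beta\eta'(\uebdg)\pxx\uebdg)$, $J_4=-\beta\eta''(\uebdg)\px\uebdg\pxx\uebdg$, $J_5=\gamma\eta'(\uebdg)\Pebdg$; by the estimates above, $J_1,J_3\to0$ in $H^{-1}((0,T)\times\R)$, the families $\{J_2\}$ and $\{J_4\}$ are bounded in $L^1((0,T)\times\R)$, and $J_5\to0$ in $L^1_{\mathrm{loc}}((0,\infty)\times\R)$ because $\gamma\norm{\Pebdg}_{L^\infty((0,T)\times\R)}\le C(T)\gamma^{3/4}\delta^{-3/4}\eps^{-1/4}\le C(T)\sqrt{\eps}\to0$ by \eqref{eq:beta-eps-2}. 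Lemma~\ref{lm:1} and the $L^p$ compensated compactness of \cite{SC} then furnish four sequences $\eps_k,\beta_k,\delta_k,\gamma_k\to0$ and a limit $u\in L^\infty(0,T;L^2(\R)\cap L^6(\R))$ with $u_{\eps_k,\beta_k,\delta_k,\gamma_k}\to u$ in $L^p_{\mathrm{loc}}((0,\infty)\times\R)$, $1\le p<6$; passing to the limit in the weak formulation of \eqref{eq:OHepswb-1} (the $\beta_k$-, $\eps_k$- and $\gamma_k$-dependent correction terms all vanishing, the last exactly as above) shows $u$ is a distributional solution of \eqref{eq:Bu2}, which is $i)$. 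Under the sharper scaling \eqref{eq:beta-eps-3} one has $\beta=o(\eps^2)$, hence by $\beta^2\int_0^T\norm{\pxx\uebdg(s,\cdot)}^2_{L^2(\R)}\ds\le C(T)\eps$ and Lemma~\ref{lm:15} the term $J_4=-\beta\eta''(\uebdg)\px\uebdg\pxx\uebdg\to0$ in $L^1((0,T)\times\R)$; testing the entropy balance against a non--negative $\phi\in C^\infty_c((0,\infty)\times\R)$ and letting $k\to\infty$ then yields $\pt\eta(u)+\px q(u)\le0$ for every convex $\eta\in C^2(\R)$, so (arguing as \cite{LN}) $u$ is the unique Kru\v{z}kov entropy solution of \eqref{eq:Bu2}, which is $ii)$. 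Finally \eqref{eq:umedianulla} follows by letting $k\to\infty$ in $\int_\R u_{\eps_k,\beta_k,\delta_k,\gamma_k}(t,x)\,dx=0$.

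The delicate step is the second one. Because the flux is cubic, the natural conserved/dissipated energy is sextic in $u$, and multiplying by the naive analogue $-2\beta\pxx\uebdg+\uebdg^2$ leaves an uncontrolled term of the form $\beta\int_\R\uebdg(\px\uebdg)^3\,dx$; one must therefore choose the polynomial multipliers (a quintic in $\uebdg$ with carefully weighted $\beta$-corrections in $\px\uebdg$, $\pxx\uebdg$, $\pxxxx\uebdg$) so that, after all integrations by parts, every term generated by the dispersion $\beta\pxxx\uebdg$ and by the forcing $\gamma\Pebdg$ is absorbed by the genuinely dissipative quantities $\eps\int_\R\uebdg^4(\px\uebdg)^2\,dx$ and $\eps\beta^2\int_\R(\pxxx\uebdg)^2\,dx$. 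This is precisely the bookkeeping carried out in \cite{Cd1,Cd4}, and it is the scaling $\beta=\mathbf{\mathcal{O}}(\eps^2)$, $\gamma=\mathbf{\mathcal{O}}(\eps\delta)$ that keeps every remainder term either infinitesimal (of order $\sqrt{\eps}$) or bounded, rather than divergent as the four parameters tend to $0$.
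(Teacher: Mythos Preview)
Your overall architecture is correct and matches the paper's: Lemma~\ref{lm:we1} carries over verbatim since it is flux--independent, an $L^\infty$ bound is obtained from a multiplier of the form $-\beta\pxx\uebdg$ plus a polynomial in $\uebdg$, an $L^6$ bound is obtained from a higher multiplier, and then Lemma~\ref{lm:1} together with Schonbek's $L^p$ compensated compactness closes the argument exactly as in Lemmas~\ref{lm:dist-solution} and \ref{lm:entropy-solution}.

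The one substantive deviation is your choice of multiplier for the $L^6$ step. You propose, by direct analogy with Lemma~\ref{lm:13}, a multiplier built from $\uebdg^5$ with $\beta$--weighted corrections in $\px\uebdg,\pxx\uebdg,\pxxxx\uebdg$, aiming at dissipative terms $\eps\int\uebdg^4(\px\uebdg)^2$ and $\eps\beta^2\int(\pxxx\uebdg)^2$ and at bounds on $\{\beta\pxx\uebdg\}$, $\{\beta\sqrt{\eps}\,\pxxx\uebdg\}$ in $L^2$. The paper does \emph{not} proceed this way: for the cubic flux it uses the much simpler multiplier $\uebdg^5-3\eps^2\pxx\uebdg$ (Lemma~\ref{lm:23}, following \cite[Lemma~2.4]{Cd4}), which in a single stroke produces the $L^6$ bound, the bound on $\eps\px\uebdg$ in $L^\infty_tL^2_x$, the dissipative quantities $\eps\int\uebdg^4(\px\uebdg)^2$ and $\eps^3\int(\pxx\uebdg)^2$, and from these the bounds on $\beta\px\uebdg\pxx\uebdg$ in $L^1$ and $\beta^2\int(\pxx\uebdg)^2\le C(T)\eps$. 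Thus the paper merges your ``Lemma~\ref{lm:13} analogue'' and ``Lemma~\ref{lm:15} analogue'' into one computation, never needs a $\pxxxx\uebdg$ multiplier, and never asserts control of $\beta\sqrt{\eps}\,\pxxx\uebdg$. Similarly, for the $L^\infty$ step the paper's multiplier is $-\beta\pxx\uebdg-\tfrac{1}{6}\uebdg^3$, yielding $\norm{\uebdg}_{L^\infty}\le C(T)\beta^{-1/2}$.

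Your route via an mKdV--type $\beta$--weighted sextic energy is plausible in principle, but you have not exhibited the multiplier, and your closing paragraph concedes that the cancellations are ``precisely the bookkeeping carried out in \cite{Cd1,Cd4}''; in fact \cite{Cd4} already uses the simpler $\eps^2$--weighted multiplier above, so the bookkeeping you allude to is not the one you describe. The paper's choice avoids the delicate fourth--order correction entirely.
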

Let us prove some a priori estimates on $\uebdg$ and $\Pebdg$, denoting with $C_0$ the constants which depend on the initial datum, and $C(T)$ the constants which depend also on $T$.

We begin by observing that Lemma \ref{lm:we1} holds  also for \eqref{eq:OHepswb-1}.
\begin{lemma}
Fixed $T>0$. There exists  $C(T)>0$, independent on $\eps$, $\beta$, $\delta$ and $\gamma$ such that
\begin{equation}
\label{eq:l-infty-u-1}
\norm{\uebdg}_{L^{\infty}((0,T)\times\R)}\le C(T)\beta^{-\frac{1}{2}}.
\end{equation}
Moreover, for every $0\le t\le T$,
\begin{equation}
\label{eq:ux21}
\beta\norm{\px\uebdg(t,\cdot)}^2_{L^2(\R)} + \beta\eps \int_{0}^{t}\norm{\pxx\uebdg(s,\cdot)}^2_{L^2(\R)}ds\le C(T)\beta^{-2}.
\end{equation}
\end{lemma}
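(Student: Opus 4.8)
The plan is to run the scheme of Lemma~\ref{lm:12}, replacing the Hamiltonian structure of the quadratic flux by that of the cubic flux $-\frac16 u^{3}$. First I would multiply the first equation of \eqref{eq:OHepswb-1} by the multiplier $-2\beta\pxx\uebdg-\frac{1}{3}\uebdg^{3}$, integrate over $\R$ and integrate by parts several times, discarding the contributions at $x=\pm\infty$ by means of Lemma~\ref{lm:we1}. Arguing as in the cubic counterpart of \cite[Lemma $2.5$]{Cd2} (cf.\ \cite{Cd4}), the cubic flux term and the dispersive term $-\beta\pxxx\uebdg$ cancel -- this is precisely the reason for this choice of multiplier -- and one is left with the energy identity
\begin{equation}\label{eq:energy-sketch}
\begin{split}
&\frac{d}{dt}\left(\beta\norm{\px\uebdg(t,\cdot)}^{2}_{L^{2}(\R)}-\frac{1}{12}\int_{\R}\uebdg^{4}\dx\right)+2\beta\eps\norm{\pxx\uebdg(t,\cdot)}^{2}_{L^{2}(\R)}\\
&\qquad=\eps\int_{\R}\uebdg^{2}(\px\uebdg)^{2}\dx-2\gamma\beta\int_{\R}\pxx\uebdg\,\Pebdg\dx-\frac{\gamma}{3}\int_{\R}\uebdg^{3}\Pebdg\dx.
\end{split}
\end{equation}
Unlike the Ostrovsky case \eqref{eq:1002}, the energy on the left of \eqref{eq:energy-sketch} is \emph{not} coercive, the quartic term entering with the wrong sign because the flux is cubic.

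Next I would bound the right--hand side of \eqref{eq:energy-sketch} exactly as in the proof of Lemma~\ref{lm:12}, using Young's and H\"older's inequalities together with \eqref{eq:stima-l-2-1} (valid here, Lemma~\ref{lm:we1} holding for \eqref{eq:OHepswb-1} as well) and the scaling \eqref{eq:beta-eps-2}. Young's inequality on $2\gamma\beta\int_{\R}\pxx\uebdg\,\Pebdg\dx$ produces the term $\beta^{2}\eps\norm{\pxx\uebdg(t,\cdot)}^{2}_{L^{2}(\R)}\le\beta\eps\norm{\pxx\uebdg(t,\cdot)}^{2}_{L^{2}(\R)}$, to be absorbed on the left, plus $\frac{\gamma^{2}}{\eps}\norm{\Pebdg(t,\cdot)}^{2}_{L^{2}(\R)}\le\frac{\gamma}{\eps\delta}C_{0}$, which integrates to $\le C(T)$; the term $\frac{\gamma}{3}\int_{\R}\uebdg^{3}\Pebdg\dx$ is bounded, via \eqref{eq:stima-l-2-1}, by $\sqrt{\gamma/\delta}\,C_{0}\norm{\uebdg}^{2}_{L^{\infty}((0,T)\times\R)}\le\sqrt{\eps}\,C_{0}\norm{\uebdg}^{2}_{L^{\infty}((0,T)\times\R)}$; the positive term $\eps\int_{\R}\uebdg^{2}(\px\uebdg)^{2}\dx$, once integrated in $t$, is at most $\norm{\uebdg}^{2}_{L^{\infty}((0,T)\times\R)}\,\eps\int_{0}^{t}\norm{\px\uebdg(s,\cdot)}^{2}_{L^{2}(\R)}\ds\le C_{0}\norm{\uebdg}^{2}_{L^{\infty}((0,T)\times\R)}$ by \eqref{eq:stima-l-2-1}; and $\frac{1}{12}\int_{\R}\uebdg^{4}\dx\le\frac{1}{12}\norm{\uebdg}^{2}_{L^{\infty}((0,T)\times\R)}\norm{\uebdg(t,\cdot)}^{2}_{L^{2}(\R)}\le C_{0}\norm{\uebdg}^{2}_{L^{\infty}((0,T)\times\R)}$. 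Integrating \eqref{eq:energy-sketch} on $(0,t)$ and using \eqref{eq:u0epsbeta-1} then yields
\begin{equation}\label{eq:cubic-100-sketch}
\beta\norm{\px\uebdg(t,\cdot)}^{2}_{L^{2}(\R)}+\beta\eps\int_{0}^{t}\norm{\pxx\uebdg(s,\cdot)}^{2}_{L^{2}(\R)}\ds\le C(T)\left(1+\norm{\uebdg}^{2}_{L^{\infty}((0,T)\times\R)}\right).
\end{equation}

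Finally, from $\uebdg^{2}(t,x)=2\int_{-\infty}^{x}\uebdg\px\uebdg\dy\le 2\norm{\uebdg(t,\cdot)}_{L^{2}(\R)}\norm{\px\uebdg(t,\cdot)}_{L^{2}(\R)}$, together with \eqref{eq:stima-l-2-1} and \eqref{eq:cubic-100-sketch}, one gets
\begin{equation*}
\norm{\uebdg}^{4}_{L^{\infty}((0,T)\times\R)}\le\frac{C(T)}{\beta}\left(1+\norm{\uebdg}^{2}_{L^{\infty}((0,T)\times\R)}\right),
\end{equation*}
and resolving this algebraic inequality, as in the cubic version of \cite[Lemma $2.7$]{Cd2}, gives \eqref{eq:l-infty-u-1}. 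Reinserting $\norm{\uebdg}_{L^{\infty}((0,T)\times\R)}\le C(T)\beta^{-\frac{1}{2}}$ into \eqref{eq:cubic-100-sketch} and using $0<\beta<1$ gives $\beta\norm{\px\uebdg(t,\cdot)}^{2}_{L^{2}(\R)}+\beta\eps\int_{0}^{t}\norm{\pxx\uebdg(s,\cdot)}^{2}_{L^{2}(\R)}\ds\le C(T)\beta^{-1}\le C(T)\beta^{-2}$, which is \eqref{eq:ux21}.

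The main obstacle, absent in the Ostrovsky case, is precisely the lack of coercivity of the energy in \eqref{eq:energy-sketch}: \eqref{eq:cubic-100-sketch} cannot be closed on its own, and one has to feed in the conserved $L^{2}$ bound \eqref{eq:stima-l-2-1} and the interpolation estimate for $\uebdg^{2}$ before the $L^{\infty}$ control can be extracted; this is the same phenomenon that makes the short pulse equation harder than the Ostrovsky one (see \cite{Cd1,Cd4}). By contrast, the $\gamma$--terms are handled routinely once \eqref{eq:beta-eps-2} is available.
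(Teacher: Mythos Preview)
Your proposal is correct and follows the paper's own argument almost verbatim: the same multiplier (up to an inessential factor of two), the same energy identity \eqref{eq:energy-sketch}, the same absorption of the $\gamma\beta$--term, the same treatment of $\eps\int u^{2}(\px u)^{2}$ and of $\frac{1}{12}\int u^{4}$ via the $L^{\infty}$ norm and \eqref{eq:stima-l-2-1}, and the same interpolation/algebraic closure leading to \eqref{eq:l-infty-u-1} and then \eqref{eq:ux21}. The only (harmless) deviation is that for $\frac{\gamma}{3}\int u^{3}P$ you use the $L^{2}$ bound on $P$ from \eqref{eq:stima-l-2-1} directly, whereas the paper invokes the $L^{\infty}$ bound \eqref{eq:p-l-infty-1}; your route is slightly more economical and yields the same $C(T)\norm{\uebdg}^{2}_{L^{\infty}}$ contribution.
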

\begin{proof}
Let $0\le t\le T$. Multiplying  \eqref{eq:OHepswb-1} by $\displaystyle -\beta\pxx\ueb -\frac{1}{6} \ueb^3$, we have
\begin{equation}
\label{eq:Ohmp-1}
\begin{split}
\left(-\beta\pxx\uebdg -\frac{1}{6}\uebdg^3\right)\pt\uebdg &- \frac{1}{6}\left(-\beta\pxx\uebdg -\frac{1}{6}\uebdg^3\right)\px\ueb^3\\
&-\beta\left(-\beta\pxx\uebdg -\frac{1}{6}\uebdg^3\right)\pxxx\uebdg\\
=&\gamma\left(-\beta\pxx\uebdg -\frac{1}{6} \ueb^3\right)\Pebdg\\
&+\eps\left(-\beta\pxx\uebdg -\frac{1}{6}\uebdg^3\right)\pxx\uebdg.
\end{split}
\end{equation}
Arguing as \cite[Lemma $2.3$]{Cd4}, we have
\begin{align*}
&\frac{d}{dt}\left(\beta\norm{\px\uebdg(t,\cdot)}^2_{L^2(\R)}-\frac{1}{12}\int_{\R}\uebdg^4dx\right)\\
&\quad\qquad + 2\beta\eps\norm{\pxx\uebdg(t,\cdot)}^2_{L^2(\R)}\\
&\quad= 2\gamma\beta\int_{\R}\pxx\uebdg\Pebdg dx -\frac{\gamma}{3}\int_{\R} \uebdg^3 \Pebdg dx\\
&\quad\qquad +\eps\int_{\R}\uebdg^2(\px\uebdg)^2dx.
\end{align*}
Since $0<\eps, \beta<1$, it follows from \eqref{eq:stima-l-2-1}, \eqref{eq:beta-eps-2} and the Young inequality that
\begin{align*}
2\gamma\beta\left\vert \int_{\R}\pxx\uebdg\Pebdg dx \right\vert \le& 2\int_{\R}\left\vert\beta\sqrt{\eps}\pxx\uebdg\right\vert\left\vert\frac{\gamma}{\sqrt{\eps}}\Pebdg\right\vert dx\\
\le & \eps\beta^2 \norm{\pxx\uebdg(t,\cdot)}^2_{L^2(\R)} + \frac{\gamma^2}{\eps} \norm{\Pebdg(t,\cdot)}^2_{L^2(\R)}\\
 \le& \eps\beta\norm{\pxx\uebdg(t,\cdot)}^2_{L^2(\R)} + \frac{\gamma}{\eps\delta}C(T)\\
 \le& \eps\beta\norm{\pxx\uebdg(t,\cdot)}^2_{L^2(\R)} +C(T).
\end{align*}
Moreover, from \eqref{eq:stima-l-2-1}, \eqref{eq:p-l-infty-1}, \eqref{eq:beta-eps-2} and the Young inequality, we have
\begin{align*}
\frac{\gamma}{3}\left\vert\int_{\R} \uebdg^3 \Pebdg dx\right\vert \le &\int_{\R} \left\vert\frac{\uebdg}{3}\right\vert \left\vert\gamma\uebdg^2\Pebdg\right\vert dx \\
\le & \frac{1}{6}\norm{\uebdg(t,\cdot)}^2_{L^{2}(\R)} + \frac{\gamma^2}{2}\int_{\R} \uebdg^4 \Pebdg^2 dx \\
\le & C(T) + \frac{\gamma^2}{2}\norm{\Pebdg}^2_{L^{\infty}((0,T)\times\R)} \norm{\uebdg}^2_{L^{\infty}((0,T)\times\R)} \norm{\uebdg(t,\cdot)}^2_{L^{2}(\R)}\\
\le & \frac{\gamma}{3} \norm{\Pebdg}_{L^{\infty}((0,T)\times\R)} \norm{\uebdg}_{L^{\infty}((0,T)\times\R)} \norm{\uebdg(t,\cdot)}^2_{L^{2}(\R)}\\
\le & C(T) +\frac{\gamma^2}{\delta^{\frac{3}{2}} \gamma^{\frac{1}{2} } \eps^{\frac{1}{2}}} C(T)\norm{\uebdg}^2_{L^{\infty}((0,T)\times\R)}\\
\le & C(T) +\eps C(T)\norm{\uebdg}^2_{L^{\infty}((0,T)\times\R)}\\
\le & C(T) + C(T)\norm{\uebdg}^2_{L^{\infty}((0,T)\times\R)}.
\end{align*}
Therefore,
\begin{align*}
&\frac{d}{dt}\left(\beta\norm{\px\uebdg(t,\cdot)}^2_{L^2(\R)}-\frac{1}{12}\int_{\R}\uebdg^4dx\right) + \beta\eps\norm{\pxx\uebdg(t,\cdot)}^2_{L^2(\R)}\\
&\quad \le C(T) + C(T)\norm{\uebdg}^2_{L^{\infty}((0,T)\times\R)} + \eps\int_{\R}\uebdg^2(\px\uebdg)^2dx\\
&\quad \le C(T) +C(T)\norm{\uebdg}^2_{L^{\infty}((0,T)\times\R)} +\eps \norm{\uebdg}^2_{L^{\infty}((0,T)\times\R)}\int_{\R}(\px\uebdg)^2dx.
\end{align*}
\eqref{eq:stima-l-2-1}, \eqref{eq:u0epsbeta-1} and an integration on $(0,t)$ gives
\begin{align*}
&\beta\norm{\px\uebdg(t,\cdot)}^2_{L^2(\R)}+\beta\eps\int_{0}^{t}\norm{\pxx\uebdg(s,\cdot)}^2_{L^2(\R)}ds\\
&\quad \le C_{0}+C(T)\norm{\uebdg}^2_{L^{\infty}((0,T)\times\R)}\int_{0}^{t}ds+C(T)\int_{0}^{t} ds +\frac{1}{12}\int_{\R}\uebdg^4dx\\
&\qquad\quad+\eps\norm{\uebdg}^2_{L^{\infty}((0,T)\times\R)}\int_{0}^{t}\norm{\px\uebdg(s,\cdot)}^2_{L^2(\R)}ds\\
&\quad\le C(T) +  C(T)\norm{\uebdg}^2_{L^{\infty}((0,T)\times\R)} + \frac{1}{12}\norm{\uebdg}^2_{L^{\infty}((0,T)\times\R)}\norm{\uebdg(s,\cdot)}^2_{L^2(\R)}\\    &\quad \le C(T)+C(T)\norm{\uebdg}^2_{L^{\infty}((0,T)\times\R)}+ \frac{C_0}{12}\norm{\uebdg}^2_{L^{\infty}((0,T)\times\R)},
\end{align*}
that is
\begin{equation}
\label{eq:ux12}
\begin{split}
\beta\norm{\px\uebdg(t,\cdot)}^2_{L^2(\R)}&+ \beta\eps\int_{0}^{t}\norm{\pxx\uebdg(s,\cdot)}^2_{L^2(\R)}ds\\
\le & C(T)\left(1+\norm{\uebdg}^2_{L^{\infty}((0,T)\times\R)}\right).
\end{split}
\end{equation}
Due to \eqref{eq:stima-l-2-1}, \eqref{eq:ux12} and the H\"older inequality,
\begin{align*}
\uebdg^2(t,x)&=2\int_{-\infty}^{x}\uebdg\px\uebdg dy\le 2\int_{\R}\vert\uebdg\px\uebdg\vert dx\\
&\le\frac{2}{\sqrt{\beta}}\norm{\uebdg}_{L^2(\R)}\sqrt{\beta}\norm{\px\uebdg(t,\cdot)}_{L^2(\R)}\\
&\le\frac{2}{\sqrt{\beta}}C_0\sqrt{ C(T)\left(1+\norm{\uebdg}^2_{L^{\infty}((0,T)\times\R)}\right)},
\end{align*}
that is
\begin{equation}
\label{eq:quarto-grado-1}
\norm{\uebdg}^4_{L^{\infty}((0,T)\times\R)} \le \frac{C(T)}{\beta}\left( 1+\norm{\uebdg}^2_{L^{\infty}((0,T)\times\R)}\right).
\end{equation}
Arguing as \cite[Lemma $2.3$]{Cd4}, we have \eqref{eq:l-infty-u-1}.

Finally, \eqref{eq:ux21} follows from \eqref{eq:l-infty-u-1} and \eqref{eq:ux12}.
\end{proof}

\begin{lemma}\label{lm:23}
Let $T>0$. Assume \eqref{eq:beta-eps-2} holds true. There exists $C(T)>0$, independent on $\eps$, $\beta$, $\delta$, and $\gamma$ such that
\begin{align}
\label{eq:u-in-l-6}
\norm{\uebdg(t,\cdot)}_{L^{6}(\R)}\le & C(T),\\
\label{eq:eps-px-u-l-21}
\eps\norm{\px\uebdg(t,\cdot)}_{L^{2}(\R)}\le & C(T),\\
\label{eq:p001}
\eps e^{t}\int_{0}^{t}\!\!\!\int_{\R}e^{-s}\uebdg^4(s,\cdot)(\px\uebdg(s,\cdot))^2 dsdx\le &C(T),\\
\label{eq:p002}
\eps^3 e^{t}\int_{0}^{t} e^{-s} \norm{\pxx\uebdg(s,\cdot)}^2_{L^2(\R)}ds \le & C(T),
\end{align}
for every $0<t<T$. Moreover,
\begin{align}
\label{eq:000101}
\beta\norm{\px\uebdg\pxx\uebdg}_{L^{1}((0,T)\times\R)} \le &C(T),\\
\label{eq:defuxx}
\beta^2\int_{0}^{T}\norm{\pxx\uebdg(s,\cdot)}^2_{L^2(\R)}ds \le& C(T)\eps.
\end{align}
\end{lemma}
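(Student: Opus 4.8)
The plan is to run two coupled energy estimates, mimicking the scheme of Lemmas~\ref{lm:13} and \ref{lm:15} but adapted to the cubic flux. First I would establish the $L^6$-type estimate: multiply \eqref{eq:OHepswb-1} by a polynomial in $\uebdg$ and its $x$-derivatives of the form $\uebdg^5$ plus $\beta$- and $\beta^2$-weighted corrections (the cubic analogue, treated in \cite{Cd4}, of the multiplier $u^3-3\beta(\px u)^2-6\beta u\pxx u+\frac{18}{5}\beta^2\pxxxx u$ of \cite[Lemma $2.6$]{Cd2}), integrate over $\R$ using \eqref{eq:P-pxP-intfy-2} and the smoothness of the classical solution, and carry out the same integrations by parts as in \cite{Cd4} to reach an identity
\begin{equation*}
\frac{d}{dt}G(t)+D_1\eps\int_{\R}\uebdg^4(\px\uebdg)^2\dx+D_2\eps\beta^2\int_{\R}(\pxxx\uebdg)^2\dx=(\text{terms linear in }\Pebdg),
\end{equation*}
where $G(t)$ controls $\int_{\R}\uebdg^6\dx$ together with $\beta$- and $\beta^2$-weighted higher-order terms and $D_1,D_2>0$ are fixed. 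The advantage of this multiplier is precisely that the dissipation $D_1\eps\int_{\R}\uebdg^4(\px\uebdg)^2\dx$ has a good sign (unlike the $\eps\int_{\R}\uebdg^2(\px\uebdg)^2\dx$ that occurs in the previous lemma), so no use of the non-uniform bound \eqref{eq:l-infty-u-1} is required. Second, I would multiply \eqref{eq:OHepswb-1} by $-\eps^2\pxx\uebdg$ and integrate, exactly as in \cite[Lemma $2.8$]{Cd2} and Lemma~\ref{lm:15}, obtaining
\begin{equation*}
\frac{\eps^2}{2}\frac{d}{dt}\int_{\R}(\px\uebdg)^2\dx+\eps^3\int_{\R}(\pxx\uebdg)^2\dx=-\frac{\eps^2}{2}\int_{\R}\uebdg^2\px\uebdg\pxx\uebdg\dx-\gamma\eps^2\int_{\R}\Pebdg\pxx\uebdg\dx,
\end{equation*}
and then add the two, working with $\widetilde G(t)=G(t)+\frac{\eps^2}{2}\int_{\R}(\px\uebdg)^2\dx$.

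Next I would dispose of the right-hand sides. Every term containing $\Pebdg$ is treated by Young's inequality: it splits into a small multiple of one of the dissipation terms already on the left, which is absorbed, plus $C(T)$ times a ratio of powers of $\gamma$, $\delta$, $\eps$; by \eqref{eq:stima-l-2-1}, \eqref{eq:P-l-2-2}, \eqref{eq:Px-in-l21}, \eqref{eq:p-l-infty-1} and the scaling $\gamma=\mathbf{\mathcal{O}}(\eps\delta)$, $\beta=\mathbf{\mathcal{O}}(\eps^2)$, every such ratio carries a positive power of $\eps$ and is therefore $\le C(T)$ --- this is exactly the bookkeeping already carried out in \eqref{eq:1011}, \eqref{eq:1012} and in the previous lemma. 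The cross term $-\frac{\eps^2}{2}\int_{\R}\uebdg^2\px\uebdg\pxx\uebdg\dx$ is Young-split into $\frac{\eps^3}{2}\int_{\R}(\pxx\uebdg)^2\dx$ (absorbed into $\eps^3\int_{\R}(\pxx\uebdg)^2\dx$) and $\frac{\eps}{8}\int_{\R}\uebdg^4(\px\uebdg)^2\dx$ (absorbed into $D_1\eps\int_{\R}\uebdg^4(\px\uebdg)^2\dx$). The single term that cannot be bounded by a constant is $\gamma\int_{\R}\abs{\uebdg}^3\abs{\Pebdg}\dx$; for it I would use \eqref{eq:p-l-infty-1}, the interpolation $\norm{\uebdg}_{L^3(\R)}^3\le\norm{\uebdg}_{L^2(\R)}^{3/2}\norm{\uebdg}_{L^6(\R)}^{3/2}\le C_0\norm{\uebdg}_{L^6(\R)}^{3/2}$ (the $L^2$ bound being \eqref{eq:stima-l-2-1}), and $\gamma=\mathbf{\mathcal{O}}(\eps\delta)$, to bound it by $\eps^{1/2}C(T)\norm{\uebdg}_{L^6(\R)}^{3/2}\le\eps^{1/2}C(T)\Big(\int_{\R}\uebdg^6\dx+1\Big)$, a quantity that is Gronwall-admissible once combined with $\widetilde G(t)$.

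Putting everything together one gets, for $0<t<T$ and $\eps$ small,
\begin{equation*}
\frac{d}{dt}\widetilde G(t)+D_1'\eps\int_{\R}\uebdg^4(\px\uebdg)^2\dx+\frac{\eps^3}{4}\int_{\R}(\pxx\uebdg)^2\dx+D_2'\eps\beta^2\int_{\R}(\pxxx\uebdg)^2\dx\le \widetilde G(t)+C(T),
\end{equation*}
with $D_1',D_2'>0$. Multiplying by $e^{-t}$, integrating on $(0,t)$ and using $\widetilde G(0)\le C_0$ from \eqref{eq:u0epsbeta-1} yields $\widetilde G(t)+e^{t}\int_0^t e^{-s}\big(\dots\big)\ds\le C(T)$ for $0\le t\le T$; this gives \eqref{eq:eps-px-u-l-21}, \eqref{eq:p001} and \eqref{eq:p002}, and --- after passing from the bound on $G(t)$ to the genuine bound on $\norm{\uebdg(t,\cdot)}_{L^6(\R)}$ by the argument of \cite[Lemma $2.7$]{Cd2} (as used in \cite{Cd4}) --- also \eqref{eq:u-in-l-6}. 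Finally \eqref{eq:000101} follows from Hölder's inequality,
\begin{equation*}
\beta\norm{\px\uebdg\pxx\uebdg}_{L^1((0,T)\times\R)}=\frac{\beta}{\eps^2}\Big(\eps\int_0^T\norm{\px\uebdg(s,\cdot)}_{L^2(\R)}^2\ds\Big)^{\frac12}\Big(\eps^3\int_0^T\norm{\pxx\uebdg(s,\cdot)}_{L^2(\R)}^2\ds\Big)^{\frac12}\le\frac{\beta}{\eps^2}C(T)\le C(T),
\end{equation*}
using \eqref{eq:stima-l-2-1}, \eqref{eq:p002} and $\beta=\mathbf{\mathcal{O}}(\eps^2)$; and \eqref{eq:defuxx} holds because $\beta^2\le C_0^2\eps^4$ gives $\beta^2\int_0^T\norm{\pxx\uebdg(s,\cdot)}_{L^2(\R)}^2\ds\le C_0^2\eps\cdot\eps^3\int_0^T\norm{\pxx\uebdg(s,\cdot)}_{L^2(\R)}^2\ds\le C(T)\eps$.

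The main obstacle is the combinatorial bookkeeping: one must decide which $\Pebdg$-term is absorbed by which dissipation term and check that every residue truly carries a positive power of $\eps$ under $\gamma=\mathbf{\mathcal{O}}(\eps\delta)$ and $\beta=\mathbf{\mathcal{O}}(\eps^2)$. The subtlest single point is the term $\gamma\int_{\R}\abs{\uebdg}^3\abs{\Pebdg}\dx$: since $\uebdg$ is only bounded in $L^\infty$ by $C(T)\beta^{-1/2}$, not uniformly, one must avoid estimating $\abs{\uebdg}$ by its sup-norm and instead interpolate so that only the (eventually controlled) $L^6$ norm appears, at the price of a Gronwall-absorbable contribution --- and this is exactly what produces the $e^{t}$ weights in \eqref{eq:p001}--\eqref{eq:p002}.
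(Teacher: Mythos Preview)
Your overall architecture --- energy identity, absorb $\Pebdg$-terms via Young and the scalings, then Gronwall --- matches the paper, and your derivation of \eqref{eq:000101}--\eqref{eq:defuxx} from \eqref{eq:stima-l-2-1}, \eqref{eq:p002} and $\beta=\mathcal{O}(\eps^2)$ is exactly what the paper does. Two points differ, however.

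\textbf{Multiplier.} The paper does not build a $\beta$-corrected multiplier analogous to \cite[Lemma~2.6]{Cd2}; it simply multiplies \eqref{eq:OHepswb-1} by $\uebdg^{5}-3\eps^{2}\pxx\uebdg$ in one step and invokes \cite[Lemma~2.4]{Cd4} for the resulting identity. This gives
\[
G_1(t)=\tfrac16\norm{\uebdg(t,\cdot)}_{L^6(\R)}^6+\tfrac{3\eps^2}{2}\norm{\px\uebdg(t,\cdot)}_{L^2(\R)}^2,
\]
both pieces nonnegative, so no auxiliary argument \`a la \cite[Lemma~2.7]{Cd2} is needed to extract the $L^6$ bound. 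The reference \cite[Lemma~2.4]{Cd4} that you invoke is precisely this simpler computation, not a $\beta$-corrected one. Your route (add $\beta$-corrections, run a separate $-\eps^2\pxx\uebdg$ estimate, then recombine into $\widetilde G$) is heavier and introduces indefinite-sign pieces in $G$ that you then have to undo.

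\textbf{The leading $\Pebdg$-term.} With a $\uebdg^5$-based multiplier the term you must control is $\gamma\int_{\R}\uebdg^{5}\Pebdg\,dx$, not $\gamma\int_{\R}|\uebdg|^{3}|\Pebdg|\,dx$; your $L^3$-interpolation is aimed at the wrong object. The paper handles $\gamma\int_{\R}\uebdg^{5}\Pebdg\,dx$ by two successive Young splittings, peeling off $\tfrac1{12}\norm{\uebdg}_{L^6}^6$ each time and reducing to $27\gamma^{4}\norm{\Pebdg}_{L^\infty}^{4}\norm{\uebdg}_{L^2}^{2}$, which is $\le C(T)$ by \eqref{eq:stima-l-2-1}, \eqref{eq:p-l-infty-1} and $\gamma=\mathcal{O}(\eps\delta)$. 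This produces exactly $\tfrac16\norm{\uebdg}_{L^6}^6\le G_1(t)$ on the right-hand side and, together with the contribution $\tfrac{3\eps^2}{2}\norm{\px\uebdg}_{L^2}^2$ coming from the $-3\gamma\eps^2\int_{\R}\pxx\uebdg\,\Pebdg\,dx$ term (handled via \eqref{eq:Px-in-l21} after one integration by parts), yields $\frac{d}{dt}G_1-G_1\le C(T)+\eps C(T)\norm{\px\uebdg}_{L^2}^2$; Gronwall and \eqref{eq:stima-l-2-1} then give all of \eqref{eq:u-in-l-6}--\eqref{eq:p002} at once. Your interpolation idea, once redirected to $|\uebdg|^5$ (via $\norm{\uebdg}_{L^5}^{5}\le\norm{\uebdg}_{L^2}^{1/2}\norm{\uebdg}_{L^6}^{9/2}$), would also close the loop, but it is not the paper's argument.
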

\begin{proof}
Let $0\le t \le T$. Multiplying \eqref{eq:OHepswb-1} by $\ueb^5 -3\eps^2\pxx\ueb$, we have
\begin{align*}
(\uebdg^5 -3\eps^2\pxx\uebdg)\pt\uebdg &- \frac{1}{6}(\uebdg^5 -3\eps^2\pxx\uebdg)\px \uebdg^3\\
&-(\uebdg^5 -3\eps^2\pxx\uebdg)\beta\pxxx\uebdg\\
=&\gamma(\uebdg^5 -3\eps^2\pxx\uebdg)\Pebdg\\
& +\eps(\uebdg^5 -3\eps^2\pxx\uebdg)\pxx\uebdg.
\end{align*}
Arguing as \cite[Lemma $2.4$]{Cd4}, we get
\begin{equation}
\label{eq:Rhs1}
\begin{split}
&\frac{d}{dt}\left(\frac{1}{6}\norm{\uebdg(t,\cdot)}^6_{L^6(\R)}+\frac{3\eps^2}{2}\norm{\px\uebdg(t,\cdot)}^2_{L^2(\R)}\right)\\
&\qquad + 5\eps\int_{\R}\ueb^4(\px\uebdg)^2 dx + 3\eps^3\norm{\pxx\uebdg(t\cdot)}^2_{L^2(\R)}\\
&\quad = \gamma \int_{\R}\uebdg^5 \Pebdg dx -3\gamma\eps^2\int_{\R}\pxx\uebdg\Pebdg dx\\
&\qquad -\frac{3\eps^2}{2}\int_{\R}\uebdg^2\px\ueb\pxx\uebdg dx -10\beta\int_{\R}\uebdg^3(\px\uebdg)^2 dx.
\end{split}
\end{equation}
Since $0<\eps <1$, due to \eqref{eq:stima-l-2-1}, \eqref{eq:p-l-infty-1}, \eqref{eq:beta-eps-2}, and the Young inequality,
\begin{equation}
\label{eq:you8}
\begin{split}
&\gamma \left \vert\int_{\R}\uebdg^5 \Pebdg dx\right\vert \le \int_{\R}\left\vert \sqrt{6}\gamma\uebdg^2\Pebdg\right\vert \left\vert\frac{\uebdg^3}{\sqrt{6}}\right\vert dx\\
&\quad \le 3\gamma^2\int_{\R}\Pebdg^2\uebdg^4 dx + \frac{1}{12}\norm{\uebdg(t,\cdot)}^6_{L^{6}(\R)}\\
&\quad \le \int_{\R}\left\vert3\sqrt{6}\gamma^2\Pebdg^2\uebdg\right\vert\left\vert\frac{\uebdg^3}{\sqrt{6}}\right\vert dx + \frac{1}{12}\norm{\uebdg(t,\cdot)}^6_{L^{6}(\R)}\\
&\quad \le 27\gamma^4\int_{\R}\Peb^4\uebdg^2 dx +  \frac{1}{6}\norm{\uebdg(t,\cdot)}^6_{L^{6}(\R)}\\
&\quad \le 27\gamma^4\norm{\Pebdg}^4_{L^{\infty}((0,T)\times\R)}\norm{\uebdg(t,\cdot)}^2_{L^{2}(\R)}+  \frac{1}{6}\norm{\uebdg(t,\cdot)}^6_{L^{6}(\R)}\\
&\quad \le 27\gamma^4C_{0}\norm{\Pebdg}^4_{L^{\infty}((0,T)\times\R)}+  \frac{1}{6}\norm{\uebdg(t,\cdot)}^6_{L^{6}(\R)}\\
&\quad \le \frac{\gamma^4}{\delta^3\gamma \eps}C(T) +  \frac{1}{6}\norm{\ueb(t,\cdot)}^6_{L^{6}(\R)} \le \frac{\gamma^3}{\delta^3\eps}C(T) +\frac{1}{6}\norm{\ueb(t,\cdot)}^6_{L^{6}(\R)}\\
&\quad \le \eps^2C(T) +\frac{1}{6}\norm{\ueb(t,\cdot)}^6_{L^{6}(\R)}\le C(T) +\frac{1}{6}\norm{\ueb(t,\cdot)}^6_{L^{6}(\R)}.
\end{split}
\end{equation}
Since $0<\eps<1$, it follows from  \eqref{eq:P-pxP-intfy-2}, \eqref{eq:Px-in-l21}, \eqref{eq:beta-eps-2} and the Young inequality that
\begin{equation}
\label{eq:you9}
\begin{split}
&-3\gamma\eps^2\int_{\R}\pxx\uebdg\Pebdg dx= 3\gamma\eps^2\int_{\R}\px\uebdg\px\Pebdg dx \\
&\quad\le 3\gamma\eps^2\left\vert \int_{\R}\px\uebdg\px\Pebdg dx\right\vert \le 3\eps^2 \int_{\R}\vert \px\uebdg\vert \left\vert\gamma\px\Pebdg\right\vert dx\\
&\quad \le \frac{3\eps^2}{2} \norm{\px\uebdg(t,\cdot)}^2_{L^{2}(\R)} +\frac{3\gamma^2\eps^2}{2} \norm{\px\Pebdg(t,\cdot)}^2_{L^{2}(\R)}\\
&\quad \le \frac{3\eps^2}{2} \norm{\px\uebdg(t,\cdot)}^2_{L^{2}(\R)} +\frac{\gamma^2\eps^2}{\delta^2\eps}C(T) \le \frac{3\eps^2}{2} \norm{\px\uebdg(t,\cdot)}^2_{L^{2}(\R)}+\eps^3C(T)\\
&\quad \le \frac{3\eps^2}{2} \norm{\px\uebdg(t,\cdot)}^2_{L^{2}(\R)}+C(T).\\
\end{split}
\end{equation}
Arguing as \cite[Lemma $2.4$]{Cd4}, we have
\begin{equation}
\label{eq:gro1}
\begin{split}
\frac{d}{dt}G_1(t) - G_1(t)+&\frac{15\eps}{4}\int_{\R}\uebdg^4(\px\ueb)^2 dx +  \frac{9\eps^3}{4}\norm{\pxx\uebdg(t,\cdot)}^2_{L^2(\R)}\\
 \le& C(T)+\eps C(T)\norm{\px\uebdg(t,\cdot)}^2_{L^2(\R)},
\end{split}
\end{equation}
where
\begin{equation}
\label{eq:def-di-G1}
G_1(t)= \frac{1}{6}\norm{\uebdg(t,\cdot)}^6_{L^6(\R)}+\frac{3\eps^2}{2}\norm{\px\uebdg(t,\cdot)}^2_{L^2(\R)}.
\end{equation}
The Gronwall Lemma, \eqref{eq:stima-l-2-1} and \eqref{eq:u0epsbeta-1} give
\begin{equation}
\label{eq:8001}
\begin{split}
G_1(t)+&\frac{15\eps}{4}e^{t} \int_{0}^{t}\!\!\!\int_{\R}e^{-s}\uebdg^4(\px\ueb)^2 dsdx + \frac{9\eps^3}{4}\int_{0}^{t}\norm{\pxx\uebdg(s,\cdot)}^2_{L^2(\R)}ds\\
\le & C_{0}e^{t} + C(T)e^{t}\int_{0}^{t} e^{-s}ds + \eps C(T)e^{t}\int_{0}^{t} e^{-s}\norm{\px\uebdg(s,\cdot)}^2_{L^2(\R)}ds\\
\le & C(T) + \eps C(T)\int_{0}^{t} \norm{\px\uebdg(s,\cdot)}^2_{L^2(\R)}ds\le C(T).
\end{split}
\end{equation}
\eqref{eq:def-di-G1} and \eqref{eq:8001} give \eqref{eq:u-in-l-6}, \eqref{eq:eps-px-u-l-21}, \eqref{eq:p001} and \eqref{eq:p002}.

Arguing as \cite[Lemma $2.4$]{Cd4}, we have \eqref{eq:000101} and \eqref{eq:defuxx}.
\end{proof}
Arguing as Lemmas \ref{lm:dist-solution} and \ref{lm:entropy-solution} we have the following results
\begin{lemma}\label{lm:dist-solution-1}
Assume that \eqref{eq:assinit2}, \eqref{eq:def-di-P02}, \eqref{eq:L-2P02}, \eqref{eq:u0epsbeta-1}, and \eqref{eq:beta-eps-2} hold. Then for any compactly supported entropy--entropy flux pair $(\eta,\, q)$, there exist four sequences $\{\eps_{k}\}_{k\in\N}$, $\{\beta_{k}\}_{k\in\N}$, $\{\delta_{k}\}_{k\in\N}$, $\{\gamma_{k}\}_{k\in\N}$,  with $\eps_k, \beta_k, \delta_k, \gamma_k \to 0$,  and a limit function
\begin{equation*}
u\in L^{\infty}(0,T; L^2(\R)\cap L^6(\R)),
\end{equation*}
 such that
\begin{equation}
\label{eq:con11}
 u_{\eps_k, \beta_k, \delta_k, \gamma_k}\to u \quad  \textrm{in} \quad  L^{p}_{loc}((0,T)\times\R),\quad \textrm{for each} \quad 1\le p <6,
\end{equation}
and $u$ is a distributional solution of \eqref{eq:Bu2}.
\end{lemma}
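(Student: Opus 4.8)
The plan is to mimic the proof of Lemma~\ref{lm:dist-solution}, feeding in the a priori estimates established above for \eqref{eq:OHepswb-1} instead of those of Section~\ref{sec:OE}. First I would fix a compactly supported entropy--entropy flux pair $(\eta,q)$, so that $\eta'$ and $\eta''$ are bounded on $\R$, multiply the first equation in \eqref{eq:OHepswb-1} by $\eta'(\uebdg)$, and use the chain rule to write
\[
\pt\eta(\uebdg)+\px q(\uebdg)=I_{1,\ebdg}+I_{2,\ebdg}+I_{3,\ebdg}+I_{4,\ebdg}+I_{5,\ebdg},
\]
the five terms being given by the same formulas as in \eqref{eq:12000}, since those do not involve the flux.

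The next step is to check the hypotheses of Lemma~\ref{lm:1}. Writing $I_{1,\ebdg}=\px(\eps\eta'(\uebdg)\px\uebdg)$, the bound \eqref{eq:stima-l-2-1} gives $\norm{\eps\eta'(\uebdg)\px\uebdg}^2_{L^2((0,T)\times\R)}\le\eps\norm{\eta'}^2_{L^\infty(\R)}C_0\to0$, hence $I_{1,\ebdg}\to0$ in $H^{-1}((0,T)\times\R)$; likewise, writing $I_{3,\ebdg}=\px(\beta\eta'(\uebdg)\pxx\uebdg)$ and using \eqref{eq:defuxx}, $\norm{\beta\eta'(\uebdg)\pxx\uebdg}^2_{L^2((0,T)\times\R)}\le\norm{\eta'}^2_{L^\infty(\R)}C(T)\eps\to0$, so $I_{3,\ebdg}\to0$ in $H^{-1}((0,T)\times\R)$. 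By \eqref{eq:stima-l-2-1} and \eqref{eq:000101} the families $\{I_{2,\ebdg}\}$ and $\{I_{4,\ebdg}\}$ are bounded in $L^1((0,T)\times\R)$, and a direct estimate on a compact set $K\subset(0,T)\times\R$ using \eqref{eq:beta-eps-2} and \eqref{eq:p-l-infty-1} shows that $\norm{\gamma\eta'(\uebdg)\Pebdg}_{L^1(K)}$ is bounded by $\sqrt\eps\,C(T)\norm{\eta'}_{L^\infty(\R)}\abs{K}$, so $I_{5,\ebdg}\to0$ in $L^1_{\loc}((0,T)\times\R)$. Then Lemma~\ref{lm:1} yields precompactness of $\{\pt\eta(\uebdg)+\px q(\uebdg)\}$ in $\Hneg((0,T)\times\R)$; combining this with the uniform bound \eqref{eq:u-in-l-6} and the $L^p$ compensated compactness of \cite{SC} produces sequences $\eps_k,\beta_k,\delta_k,\gamma_k\to0$ and a limit $u\in L^\infty(0,T;L^2(\R)\cap L^6(\R))$ such that $u_{\eps_k,\beta_k,\delta_k,\gamma_k}\to u$ a.e.\ and in $L^p_{\loc}$ for $1\le p<6$, which is \eqref{eq:con11}.

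It remains to pass to the limit in the weak formulation of \eqref{eq:OHepswb-1} tested against an arbitrary $\phi\in C^\infty(\R^2)$ with compact support, the target being
\[
\int_{0}^{\infty}\!\!\!\int_{\R}\Bigl(u\pt\phi-\tfrac16 u^3\px\phi\Bigr)dtdx+\int_{\R} u_0(x)\phi(0,x)dx=0.
\]
The diffusive and dispersive contributions $\eps_k\int_{0}^{\infty}\!\!\!\int_{\R}\uebdg\,\pxx\phi\,dtdx$ and $\beta_k\int_{0}^{\infty}\!\!\!\int_{\R}\uebdg\,\pxxx\phi\,dtdx$, together with their traces at $t=0$, vanish because $\{\uebdg\}$ is bounded in $L^\infty(0,T;L^2(\R))$ by \eqref{eq:stima-l-2-1}; the source term satisfies $\gamma_k\norm{\Pebdgk}_{L^2((0,T)\times\R)}\norm{\phi}_{L^2((0,T)\times\R)}\le\sqrt{\eps_k}\,C(T)\norm{\phi}_{L^2((0,T)\times\R)}\to0$ by \eqref{eq:beta-eps-2} and \eqref{eq:P-l-2-2}; and $\uebdg^3\to u^3$ in $L^1_{\loc}$ because $u_{\eps_k,\beta_k,\delta_k,\gamma_k}\to u$ in $L^3_{\loc}$ while $\{\uebdg^2\}$ stays bounded in $L^3_{\loc}$ by \eqref{eq:u-in-l-6}. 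Then \eqref{eq:u0epsbeta-1} and \eqref{eq:con11} close the argument.

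The hard part will not be the estimates, which are essentially done above, but the compensated-compactness step: unlike in Theorem~\ref{th:main-1}, the cubic flux only yields an $L^6$ bound on $\uebdg$, so the classical Tartar framework has to be replaced by its $L^p$ version from \cite{SC}; the accompanying subtlety is the precise bookkeeping of the powers of $\eps,\delta,\gamma$ in the $\gamma$-dependent remainders, which is exactly what the scaling \eqref{eq:beta-eps-2} is arranged to make vanish.
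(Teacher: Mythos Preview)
Your proposal is correct and follows exactly the route the paper intends: the paper itself simply writes ``Arguing as Lemmas \ref{lm:dist-solution} and \ref{lm:entropy-solution}'' with no further details, and you have supplied precisely those details, swapping in the Section~\ref{sec:RSPE} estimates \eqref{eq:u-in-l-6}, \eqref{eq:000101}, \eqref{eq:defuxx} for their Section~\ref{sec:OE} counterparts and handling the cubic flux via the $L^p$ compensated compactness of \cite{SC}. The only cosmetic difference is that in the weak-formulation step the paper bounds the $\gamma_k\Pebdgk$ term via the $L^\infty$ estimate \eqref{eq:p-l-infty-1}, whereas you use the $L^2$ estimate \eqref{eq:P-l-2-2}; both yield the same $\sqrt{\eps_k}$ decay under \eqref{eq:beta-eps-2}.
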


\begin{lemma}
\label{lm:entropy-solution-1}
Assume that \eqref{eq:assinit2}, \eqref{eq:def-di-P02}, \eqref{eq:L-2P02}, \eqref{eq:u0epsbeta-1}, and \eqref{eq:beta-eps-3} hold. Then,
\begin{equation}
\label{eq:con31}
u_{\eps_k, \beta_k, \delta_k \gamma_k}\to u \quad  \textrm{in} \quad  L^{p}_{loc}((0,T)\times\R),\quad \textrm{for each} \quad 1\le p <6,
\end{equation}
where $u$ is  the unique entropy solution of \eqref{eq:Bu2}.
\end{lemma}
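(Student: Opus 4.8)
The argument follows the pattern of Lemma~\ref{lm:entropy-solution} (itself modelled on \cite{LN}), replacing the a priori bounds of Section~\ref{sec:OE} by those of Lemmas~\ref{lm:we1} (which holds verbatim for \eqref{eq:OHepswb-1}) and~\ref{lm:23}. Fix a compactly supported entropy--entropy flux pair $(\eta,q)$, $q'=\eta'f'$ with $f(u)=-\frac{1}{6}u^3$. Multiplying the first equation in \eqref{eq:OHepswb-1} by $\eta'(\uebdg)$ and using the chain rule gives
\begin{equation*}
\pt\eta(\uebdg)+\px q(\uebdg)=I_{1,\ebdg}+I_{2,\ebdg}+I_{3,\ebdg}+I_{4,\ebdg}+I_{5,\ebdg},
\end{equation*}
with $I_{1,\ebdg},\dots,I_{5,\ebdg}$ exactly as in \eqref{eq:12000} (the splitting does not see the flux). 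The plan is: first, show that the left-hand side is precompact in $\Hneg$ on every bounded open set by checking the hypotheses of Lemma~\ref{lm:1}; second, combine this with the uniform $L^6$ bound \eqref{eq:u-in-l-6} and the $L^p$ compensated compactness theorem of \cite{SC} (the cubic $f$ being genuinely nonlinear) to extract sequences $\eps_k,\beta_k,\delta_k,\gamma_k\to0$ along which $u_{\eps_k,\beta_k,\delta_k,\gamma_k}\to u$ a.e.\ and in $L^p_{\loc}((0,T)\times\R)$, $1\le p<6$, with $u\in L^\infty(0,T;L^2(\R)\cap L^6(\R))$, that is \eqref{eq:con31}; third, pass to the limit in the entropy inequality to identify $u$ with the \Kruzkov solution of \eqref{eq:Bu2}.

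For the first two steps I would argue exactly as in Lemmas~\ref{lm:dist-solution-1} and~\ref{lm:entropy-solution}: writing $I_{1,\ebdg}=\px(\eps\eta'(\uebdg)\px\uebdg)$ and using \eqref{eq:stima-l-2-1}, $\norm{\eps\eta'(\uebdg)\px\uebdg}_{L^2((0,T)\times\R)}^2\le\eps\norm{\eta'}_{L^\infty}^2 C_0\to0$, so $I_{1,\ebdg}\to0$ in $H^{-1}((0,T)\times\R)$; writing $I_{3,\ebdg}=\px(\beta\eta'(\uebdg)\pxx\uebdg)$ and using \eqref{eq:defuxx}, $\norm{\beta\eta'(\uebdg)\pxx\uebdg}_{L^2((0,T)\times\R)}^2\le\norm{\eta'}_{L^\infty}^2\beta^2\int_0^T\norm{\pxx\uebdg(s,\cdot)}_{L^2(\R)}^2ds\le C(T)\eps\to0$, so $I_{3,\ebdg}\to0$ in $H^{-1}((0,T)\times\R)$; $I_{2,\ebdg}=-\eps\eta''(\uebdg)(\px\uebdg)^2$ is bounded in $L^1((0,T)\times\R)$ by \eqref{eq:stima-l-2-1}, $I_{4,\ebdg}=-\beta\eta''(\uebdg)\px\uebdg\pxx\uebdg$ by \eqref{eq:000101}; and $I_{5,\ebdg}=\gamma\eta'(\uebdg)\Pebdg$ is bounded in $L^1_{\loc}$, because on a compact $K\subset(0,T)\times\R$, by \eqref{eq:beta-eps-2} and the $L^\infty$ bound for $\Pebdg$ in Lemma~\ref{lm:we1},
\begin{equation*}
\norm{\gamma\eta'(\uebdg)\Pebdg}_{L^1(K)}\le\gamma\norm{\eta'}_{L^\infty}\norm{\Pebdg}_{L^\infty((0,T)\times\R)}\abs{K}=\eps^{1/2}C(T)\norm{\eta'}_{L^\infty}\abs{K}\to0.
\end{equation*}
Lemma~\ref{lm:1} then gives the $\Hneg$ precompactness and, with \eqref{eq:u-in-l-6} and \cite{SC}, \eqref{eq:con31}.

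For the third step, take $\eta$ in addition convex and $\phi\in C^\infty_c((0,\infty)\times\R)$ with $\phi\ge0$. Integrating $\pt\eta(\uebdg)+\px q(\uebdg)\le I_{1,\ebdg}+I_{3,\ebdg}+I_{4,\ebdg}+I_{5,\ebdg}$ against $\phi$ (the contribution $I_{2,\ebdg}\le0$ of the viscous term is discarded, by convexity of $\eta$) and integrating by parts in $I_{1,\ebdg}$ and $I_{3,\ebdg}$, the right-hand side is bounded, up to factors involving $\norm{\phi}_{L^\infty}$, $\norm{\px\phi}_{L^2}$, $\norm{\eta'}_{L^\infty}$ and $\norm{\eta''}_{L^\infty}$, by a constant times $\eps\norm{\px\uebdg}_{L^2}+\beta\norm{\pxx\uebdg}_{L^2}+\beta\norm{\px\uebdg\pxx\uebdg}_{L^1}+\gamma\norm{\Pebdg}_{L^2}$ (all norms over $(0,T)\times\R$); along the sequences each summand tends to $0$, respectively by \eqref{eq:stima-l-2-1}, by \eqref{eq:defuxx}, by \eqref{eq:000101} in the sharper form below, and by \eqref{eq:beta-eps-3} together with the $L^2((0,T)\times\R)$ bound for $\Pebdg$ in Lemma~\ref{lm:we1}. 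Passing to the limit with \eqref{eq:con31} and dominated convergence yields $\int_0^\infty\!\!\int_\R(\pt\eta(u)+\px q(u))\phi\,dtdx\le0$ for every such $\eta$ and $\phi$, i.e.\ the entropy inequality for \eqref{eq:Bu2}; by the \Kruzkov uniqueness theorem $u$ is the unique entropy solution, so the limit does not depend on the chosen subsequence.

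The one genuinely delicate point --- and the reason part~$ii)$ needs the strict scaling $\beta=o(\eps^2)$ rather than $\beta=\mathcal{O}(\eps^2)$ of part~$i)$ --- is the dispersive commutator $I_{4,\ebdg}=-\beta\eta''(\uebdg)\px\uebdg\pxx\uebdg$. By the interpolation
\begin{equation*}
\beta\norm{\px\uebdg\pxx\uebdg}_{L^1((0,T)\times\R)}\le\frac{\beta}{\eps^2}\Big(\eps\!\int_0^T\!\!\int_\R(\px\uebdg)^2dsdx\Big)^{1/2}\Big(\eps^3\!\int_0^T\!\!\int_\R(\pxx\uebdg)^2dsdx\Big)^{1/2},
\end{equation*}
together with \eqref{eq:stima-l-2-1} and \eqref{eq:p002}, the scaling $\beta=\mathcal{O}(\eps^2)$ produces only a uniform $L^1$ bound for $I_{4,\ebdg}$ --- which already suffices for Lemma~\ref{lm:1}, hence for the distributional solution of Lemma~\ref{lm:dist-solution-1} --- whereas the limiting entropy inequality requires $I_{4,\ebdg}\to0$ in $L^1$; the prefactor $\beta/\eps^2\to0$ forced by $\beta=o(\eps^2)$ is precisely what upgrades this bound to convergence to zero.
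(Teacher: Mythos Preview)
Your proof is correct and follows essentially the same route the paper indicates (``Arguing as Lemmas~\ref{lm:dist-solution} and~\ref{lm:entropy-solution}''): the same decomposition $I_{1,\ebdg},\dots,I_{5,\ebdg}$, the same use of Lemma~\ref{lm:1} and the $L^p$ compensated compactness of \cite{SC}, and the same passage to the entropy inequality, with the Section~\ref{sec:RSPE} estimates (Lemmas~\ref{lm:we1} and~\ref{lm:23}) substituted for those of Section~\ref{sec:OE}. Two cosmetic remarks: in the $I_{5,\ebdg}$ bound you should cite \eqref{eq:beta-eps-3} rather than \eqref{eq:beta-eps-2} (the $\gamma$ scaling is identical, so the estimate is unchanged), and when you invoke \eqref{eq:p002} note that the weight $e^{t}e^{-s}\ge1$ on $[0,t]$ immediately yields the unweighted bound $\eps^{3}\int_{0}^{T}\norm{\pxx\uebdg}^{2}_{L^{2}(\R)}\,ds\le C(T)$ you actually use.
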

\begin{proof}[Proof of Theorem \ref{th:main-4}]
$i)$ and $ii)$ follows from Lemmas \ref{lm:dist-solution-1}, and \ref{lm:entropy-solution-1}, while \eqref{eq:umedianulla} follows from \eqref{u-media-nulla} \eqref{eq:con11}, or \eqref{eq:con31}.
\end{proof}

\end{document}